\DeclareMathOperator*{\argmin}{arg\,min}
\DeclareMathOperator*{\argmax}{arg\,max}
\newcommand{\eg}[1]{\textcolor{red}{#1}}
\def\eg#1{{\color{black}#1}}
\colorlet{lightpurple}{purple!30}
\theoremstyle{plain}% Theorem-like structures provided by amsthm.sty
\newtheorem{theorem}{Theorem}[section]
\newtheorem{lemma}[theorem]{Lemma}
\newtheorem{corollary}[theorem]{Corollary}
\newtheorem{proposition}[theorem]{Proposition}
\theoremstyle{definition}
\newtheorem{definition}[theorem]{Definition}
\newtheorem{assumption}[theorem]{Assumption}
\theoremstyle{remark}
\newtheorem{remark}{Remark}
\def\L{\mathcal{L}}
\def\R{\mathbb{R}}
\newcommand{\smallfrac}[2]{{\textstyle\frac{#1}{#2}}}
\newcommand{\relu}[1]{\left[{\textstyle #1}\right]_+}
\newcommand{\negrelu}[1]{\left[-{\textstyle #1}\right]_+}
\def\Sc{\mathcal{S}}
\def\Ac{\mathcal{A}}
\def\Hc{\mathcal{H}}
\def\H{\mathcal{H}}
\def\O{\mathcal{O}}
\newcommand{\norm}[1]{\left\|{#1}\right\|} % norm
\newcommand{\normbig}[1]{\big\|{#1}\big\|} % norm
\begin{document}

\twocolumn[

\aistatstitle{Algorithm for Constrained Markov Decision Process with Linear Convergence}

\aistatsauthor{
Egor Gladin \\ \small{\textsf{egor.gladin@student.hu-berlin.de}} \\
{\fontsize{8}{10}\selectfont Humboldt University of Berlin,} \\ {\fontsize{8}{10}\selectfont Moscow Institute of Physics and Technology}
\And
Maksim Lavrik-Karmazin \\ \small{\textsf{lavrik-karmazin.mb@phystech.edu}} \\ {\fontsize{8}{10}\selectfont Moscow Institute of Physics and Technology} \\ \And
Karina Zainullina \\ \small{\textsf{zaynullina.ke@phystech.edu}} \\ {\fontsize{8}{10}\selectfont Moscow Institute of Physics and Technology} \AND
Varvara Rudenko \\ \small{\textsf{rudenko.vd@phystech.edu}} \\ {\fontsize{8}{10}\selectfont Moscow Institute of Physics and Technology,} \\ {\fontsize{8}{10}\selectfont HSE University} \And
Alexander Gasnikov \\ \small{\textsf{gasnikov@yandex.ru}} \\ {\fontsize{8}{10}\selectfont Moscow Institute of Physics and Technology,} \\ {\fontsize{8}{10}\selectfont ISP RAS Research Center}\\ {\fontsize{8}{10}\selectfont for Trusted Artificial Intelligence} \And
Martin Takáč \\ \small{\textsf{takac.mt@gmail.com}} \\ {\fontsize{8}{10}\selectfont Mohamed bin Zayed University} \\ {\fontsize{8}{10}\selectfont of Artificial Intelligence}}

\aistatsaddress{} ]
% \maketitle
% \tiny{\texttt{egor.gladin@student.hu-berlin.de}} \And  \texttt{lavrik-karmazin.mb@phystech.edu} \And \texttt{zaynullina.ke@phystech.edu} \AND \texttt{rudenko.vd@phystech.edu} \And \texttt{gasnikov@yandex.ru} \And \texttt{takac.mt@gmail.com} 
\begin{abstract}
    The problem of constrained Markov decision process is considered. An agent aims to maximize the expected accumulated discounted reward subject to multiple constraints on its costs (the number of constraints is relatively small). A new dual approach is proposed with the integration of two ingredients: entropy-regularized policy optimizer and Vaidya’s dual optimizer, both of which are critical to achieve faster convergence. The finite-time error bound of the proposed approach is provided. Despite the challenge of the nonconcave objective subject to nonconcave constraints, the proposed approach is shown to converge (with linear rate) to the global optimum. The complexity expressed in terms of the optimality gap and the constraint violation significantly improves upon the existing primal-dual approaches.
\end{abstract}
\iffalse
\begin{abstract}
\todo[inline]{if we want to have an abstract written differently, we can consider this one}
In this paper, we consider the constrained Markov decision process, where an agent aims to maximize the expected accumulated discounted reward subject to a relatively small number of constraints on its costs.
We propose a new dual approach based on two ingredients: i) entropy-regularized policy optimizer and ii) Vaidya’s dual optimizer.
We show how these two crucial techniques can be combined to achieve faster convergence. We provide the finite-time error bound for our method. Even though the objective and constraints are nonconcave, 
we show that our proposed approach converges to the global optimum with a linear rate.
The iteration complexity results (expressed in terms of the optimality gap, and the constraint violation) significantly improve upon the existing primal-dual approaches.
\end{abstract}
\fi
\section{Introduction}
\iffalse
\todo{In this paper we consider $\gamma$-discounted infinite-horizon constrained Markov decision process (CMDP) \cite{altman1999constrained}. Such problem arises in many practical applications: autonomous driving \cite{fisac2018general}, robotics \cite{ono2015chance}, in applications when for an agent it is critical to met safety constraints, for example an energy-efficient wireless communication system may want to consume minimum power without violating any constraint on quality service \cite{li2016cmdp}. Such Reinforcement Learning (RL) problems are often formulated as CMDP \cite{garcia2015comprehensive}.}
\fi
In this paper we consider $\gamma$-discounted infinite-horizon constrained Markov decision process (CMDP) \cite{altman1999constrained}. Such problem arises in many practical applications, such as autonomous driving \cite{fisac2018general}, robotics \cite{ono2015chance} 
or 
% applications
systems where the agent must meet safety constraints.
An example of such a problem is an energy-efficient wireless communication system that aims to consume minimum power without violating any constraint on quality service \cite{li2016cmdp}. Such Reinforcement Learning (RL) problems are often formulated as CMDP \cite{garcia2015comprehensive}.

\iffalse
\todo{In October 2021 there appears independently three papers \cite{ying2022dual,lanarcpo,liu2021fast} where the authors under various assumptions propose algorithms that have $\tilde{\mathcal{O}}\left(1/\epsilon\right)$ iteration complexity to find global optimum (we skip the dependence on $1 - \gamma$ and logarithmic factors); $\epsilon$ characterize optimality gap and constraint violation; each iteration has the same complexity as iteration of Policy Gradient (PG) methods.}
\fi
Recently, \cite{ying2022dual,lanarcpo,liu2021fast} proposed algorithms  (under various assumptions)
that achieve $\tilde{\mathcal{O}}\left(1/\epsilon\right)$\footnote{For clarity we skip the dependence on $1 - \gamma$ and logarithmic factors.} iteration complexity to find global optimum, where $\epsilon$ characterizes optimality gap and constraint violation. Each iteration of the proposed methods has the same complexity as an iteration of the Policy Gradient (PG) methods. 

\iffalse
\todo{Although the CMDP problem is nonconcave (CMDP problem is typical maximization problem) in policy $\pi$ (nonconcavity inherited from nonconcavity of MDP problem, which is nonconcave even in the bandit case \cite{mei2020global}), the complexity $\tilde{\mathcal{O}}\left(1/\epsilon\right)$ fits lower bound for smooth concave problems with large number of constraints \cite{nemirovsky1992information,ouyang2021lower}. But if we have only a few constraints $m$ (this is typical for the most of practical applications), this results is not optimal and we may expect for concave problems with $m$ constraints $\tilde{\mathcal{O}}\left(m\right)$ iteration complexity \cite{gasnikov2016universal,gladin2020solving,Gladin2021SolvingSM,xu2020first}. In this paper we transferring $\tilde{\mathcal{O}}\left(m\right)$ iteration complexity result to nonconcave CMDP problem.}
\fi
Although the CMDP problem is nonconcave (CMDP problem is typically a maximization problem) in policy $\pi$ (nonconcavity inherited from 
% \eg{(}nonconcavity of\eg{ - maybe remove that?) AG: Ok}
MDP problem, which is nonconcave even in the bandit case \cite{mei2020global}), the complexity $\tilde{\mathcal{O}}\left(1/\epsilon\right)$ fits lower bound for smooth concave problems with large number of constraints \cite{nemirovsky1992information,ouyang2021lower}. 
Despite that fact, if we have only a few constraints $m$ --- that is typical for most of the practical applications --- these results are not optimal and we may expect $\tilde{\mathcal{O}}\left(m\right)$ iteration complexity for concave problems with $m$ constraints \cite{gasnikov2016universal,gladin2020solving,Gladin2021SolvingSM,xu2020first}, which corresponds to lower bound for small enough $m$ \cite{nemirovsky1979problem}. In this paper we are transferring $\tilde{\mathcal{O}}\left(m\right)$ iteration complexity result to the nonconcave CMDP problem.

\subsection{Related work}
There is a considerable interest in RL / MDP problems \cite{sutton1999policy,puterman2014markov,bertsekas2019reinforcement} and CMDP problems \cite{altman1999constrained}. For the past ten years there was a great theoretical progress in different directions. For example,
given a generative model with $|\Sc|$ states and $|\Ac|$ actions, we can find $\epsilon$-policy ($\epsilon$ is a quality in terms of cumulative reward) for $\gamma$-discounted infinite-horizon MDP problem with
\begin{equation}\label{complexity}
    \tilde{\mathcal{O}}\left(\frac{|\Sc| \cdot |\Ac|}{(1-\gamma)^{3}\epsilon^{2}}\right)
\end{equation}
samples  \cite{sidford2018near,wainwright2019variance,agarwal2020model} (analogously for CMDP, see arXiv version of  \cite{jin2020efficiently}) that corresponds (up to logarithmic factors) to the lower bound from \cite{azar2012sample}. Moreover, the dependence on $\epsilon$ can be improved to $\log\left(1/\epsilon\right)$ at the expense of dependence on $|\Sc|$.
% ($|\Sc|\cdot|\Ac|$ ). 
Unfortunately, in many practical applications these optimal algorithms do not work at all due to the size of $|\Sc|$.  

A popular way to escape from the curse of dimensionality is to use PG methods \cite{mnih2015human,schulman2015trust}, where a parameterized (for example by Deep Neural Networks \cite{li2017deep}) class of policies is considered.  In the core of PG-type methods for MDP problems lie gradient-type methods (Mirror Descent \cite{lan2022policy,zhan2021policy}, Natural Policy Gradient (NPG) \cite{kakade2001natural,cen2021fast}, etc.) in the space of parameters applied to a properly regularized (in proper proximal setup) cumulative reward maximization problem. The gradient is calculated by using policy gradient theorem \cite{sutton1999policy}, which reduces gradient calculation to $Q$-function \eg{(value function $V$)} estimation. Under proper choice of regularizers (proximal setups), these methods require $\tilde{\mathcal{O}}\left((1-\gamma)^{-1}\right)$ iterations (they converge linearly in function value and in policy) and are not sensitive to inexactness $\delta$ of $Q$-value estimation ($\delta \sim \epsilon$), see details in \cite{cen2021fast,lan2022policy,zhan2021policy} and reference therein. Given a generative model, it is possible to obtain from these results (see \cite{azar2012sample,agarwal2020model}) analogs of formula \eqref{complexity} for sample complexity that would be worse in terms of $(1-\gamma)$ dependence \cite{cen2021fast}, but can be better in terms of $|\Sc|$.

For CMDP problems, PG methods are also well-developed, see, e.g., surveys in \cite{lanarcpo,liu2021fast}
and references therein.  
% As we have already mentioned, t
The best (in terms of PG iterations) known complexity bounds were obtained in these works \cite{lanarcpo,liu2021fast,ying2022dual}.  
% assume that the number of constraints $m$ is big enough. 

In \cite{ying2022dual} with additional strong assumption (initial state distribution covers the entire state space) the complexity bound $\tilde{\mathcal{O}}\left(\epsilon^{-1}\right)$  was obtained for entropy-regularized CMDP (for true CMDP -- $\tilde{\mathcal{O}}\left(\epsilon^{-2}\right)$).  In \cite{lanarcpo} the complexity bound $\tilde{\mathcal{O}}\left(\epsilon^{-1}\right)$ was obtained under weaker additional assumption (Markov chain induced by any stationary policy is ergodic) by using dual approach, see Section~\ref{contr} for the details. For both of these approaches, given a generative model, one can obtain analogs of formula \eqref{complexity} for sample complexity that would be worse not only in terms of $(1-\gamma)$ dependence but also in terms of $\epsilon$ (but still can be better in terms of $|\Sc|$).

In \cite{liu2021fast} the complexity bound $\tilde{\mathcal{O}}\left(\epsilon^{-1}\right)$ was obtained without additional assumptions. 
% \eg{But this work does not have sensitivity analysis to inexactness of $V^{\pi}$-values -- output of NPG.}
\eg{We summarize the described results in the Table~\ref{table:1}.}
% \eg{We summarize the described results in the Table~\ref{table:1}. The designations are clarified in the main text of the article: $\epsilon$ -- accuracy in terms of optimality gap and constraint violation, $\xi$ -- Slater's parameter (could be small), $\zeta$ -- numerical constant (small $\zeta \simeq 10^{-7}$, usually much larger in practice $\zeta \simeq 10^{-1}$) in Vaidya's cutting-plane method, $\beta$ -- mixing time parameter (could be close to 1), $R_{\max} \le r_{\max}\sqrt{m}$, $r_{\max}$ -- parameter that bounds all the rewards.}\footnote{\eg{Samples estimate was obtained based on the results from \cite{cen2021fast,zhan2021policy}, where it was shown that $\varepsilon$ (accuracy of $V^{\pi}$ -- output of NPG), determines the accuracy $\delta$ of $Q$-function evaluation $\delta \sim (1 - \gamma)^2\varepsilon$. The sample complexity of $\delta$-value of $Q$-function is $|S||A|(1-\gamma)^{-3}\delta^{-2}$. So sample complexity is $\text{[\# NPG method iterations]}\cdot |S||A|(1-\gamma)^{-7}\varepsilon^{-2}$. Note that in such a way we obtain upper bound on samples complexity. In practice such theoretical bounds turn out to be greatly overestimated (they are far from being optimal in terms of $(1 - \gamma)$), rather than estimates for the number of NPG iterations.}}
% But PG procedure (Policy Mirror Descent) was incorporated into the method directly, so we do not have sensitivity analysis for inexactness in $Q$-values. 

\begin{table*} % [h!]
\centering
% \colorbox{lightpurple}{
    \begin{tabular}{ |c|c|c|c|c| } 
     \hline
     Method & \thead{$\beta$-Uniform Ergodicity\\ assumption} & \# NPG method iterations & \thead{Accuracy\\ of $V^{\pi}$} & Samples\\ 
     \hline
     \hline
     AR-CPO \cite{lanarcpo} & \ding{51} & $\tilde{\mathcal{O}}\left(\frac{R_{\max}\sqrt{m}}{((1 - \beta)\xi)^{1/2}(1 - \gamma)^3\epsilon}\right)$ &
     $\sim\epsilon^2$
     & $\sim \epsilon^{-5}$
    %  $\tilde{\mathcal{O}}\left(\frac{((1 - \beta)\xi)^{1/2}(1 - \gamma)^2\epsilon^2}{R_{\max}\sqrt{m}}\right)$
     \\ 
     \hline PMD-PD
     \cite{liu2021fast} & \ding{55} & $\tilde{\mathcal{O}}\left(\frac{1}{(1 - \gamma)^3\epsilon}\left[\frac{m}{(1 - \gamma)^2} + \frac{1}{\xi}\right]\right)$ &   $\sim\epsilon$ 
      & $\sim \epsilon^{-3}$
      \\ 
     \hline \textbf{This work}
      & \ding{51}  & 
     $\tilde{\mathcal{O}}\left(\frac{m}{(1-\gamma)\zeta} \right)$ &
     $\sim\epsilon^2$
      & $\sim \epsilon^{-4}$
    %  $\tilde{\mathcal{O}}\left(\frac{(1 - \beta)\xi^2(1 - \gamma)^4\epsilon^2}{R_{\max}^2 m}\right)$ 
    \\ 
     \hline
    \end{tabular}%}
\caption{
\eg{
Complexities of state-of-the-art methods for CMDP.  Notation: $\epsilon$ -- accuracy in terms of optimality gap and constraint violation, $\xi$ -- Slater's parameter (could be small), $\zeta$ -- numerical constant (small $\zeta \simeq 10^{-7}$, usually much larger in practice $\zeta \simeq 10^{-1}$) in Vaidya's cutting-plane method, $\beta$ -- mixing time parameter (could be close to 1), $R_{\max} \le r_{\max}\sqrt{m}$, $r_{\max}$ -- parameter that bounds all the rewards.\protect\footnotemark}
% \footnote{\eg{Samples estimate was obtained based on the results from \cite{cen2021fast,zhan2021policy}, where it was shown that $\varepsilon$ (accuracy of $V^{\pi}$ -- output of NPG), determines the accuracy $\delta$ of $Q$-function evaluation $\delta \sim (1 - \gamma)^2\varepsilon$. The sample complexity of $\delta$-value of $Q$-function is $|S||A|(1-\gamma)^{-3}\delta^{-2}$. So sample complexity is $\text{[\# NPG method iterations]}\cdot |S||A|(1-\gamma)^{-7}\varepsilon^{-2}$. Note that in such a way we obtain upper bound on samples complexity. In practice such theoretical bounds turn out to be greatly overestimated (they are far from being optimal in terms of $(1 - \gamma)$), rather than estimates for the number of NPG iterations.}}
}
\label{table:1}
\end{table*}

\footnotetext{\eg{Samples estimate was obtained based on the results from \cite{cen2021fast,zhan2021policy}, where it was shown that $\varepsilon$ (accuracy of $V^{\pi}$ -- output of NPG), determines the accuracy $\delta$ of $Q$-function evaluation $\delta \sim (1 - \gamma)^2\varepsilon$. The sample complexity of $\delta$-value of $Q$-function is $|S||A|(1-\gamma)^{-3}\delta^{-2}$. So sample complexity is $\text{[\# NPG method iterations]}\cdot |S||A|(1-\gamma)^{-7}\varepsilon^{-2}$. Note that in such a way we obtain upper bound on samples complexity. In practice such theoretical bounds turn out to be greatly overestimated (they are far from being optimal in terms of $(1 - \gamma)$), rather than estimates for the number of NPG iterations.}}

\subsection{Main contributions}\label{contr}
In the core of our approach lies the paper \cite{lanarcpo}, where the authors introduce entropy-regularized policy optimizer and solve regularized dual problem by proper version of Nesterov's accelerated gradient method. 
First of all, they use the 
% \eg{(}fact of\eg{ -maybe remove that? AG: Ok)}
strong duality for CMDP problem, which can be derived \cite{paternain2019constrained} from the fact of compactness and convexity of the set of occupation measures  \cite{borkar1988convex} or
% \eg{(}can be derived\eg{ -repetition, can be removed?) AG: Ok}
from Linear Programming representation of CMDP problem in discounted state-action visitation distribution \cite{altman1999constrained}. The next important step is entropy policy regularization. This regularization simultaneously solves several tasks at once. First of all, it allows to estimate the gradient of dual function using NPG method that has a linear rate of convergence (in policy) and is robust to inexactness in $Q$-function evaluations \cite{cen2021fast}. The linear rate of convergence in policy is crucial since the dual accelerated method is sensitive to inexactness in gradient, which can be controlled if we have convergence in policy. 
Secondly, this regularization allows to prove smoothness (in the spirit of \cite{nesterov2005smooth} and with additional nice analysis of Mitrophanov's perturbation bounds \eg{\cite{mitrophanov2005sensitivity,zou2019finite}} for showing that visitation measure is Lipschitz w.r.t. the policy) of the dual problem. 
The smoothness of the dual problem allows to use Nesterov's accelerated method to solve it and to get an optimal rate.
The last step is the regularization of the dual problem to obtain a linear rate of convergence for the dual accelerated method, which negates the fact that we should solve the dual problem with higher accuracy to obtain the desired accuracy for the primal problem and constraint violation \cite{devolder2012double,gasnikov2016efficient}. An alternative approach, which has not been realized yet, is related to the primal-dual analysis of the method, which is used for the dual problem, see \cite{nemirovski2010accuracy} for convex problems. 
In this approach, it is sufficient to solve the dual problem with the same accuracy as we wish to solve the primal one. This primal-dual approach may conserve the dependence on $\epsilon^{-2}$ in \eqref{complexity} in the final sample complexity estimate if the method we used for the dual problem does not accumulate an error in gradient over iterations. From \cite{nemirovski2010accuracy,gladin2020solving} it is known that the Ellipsoid method is a primal-dual one and does not accumulate an error in gradient. 

Our contribution \eg{is to replace the dual accelerated method in the approach described above} with Vaidya's cutting-plane method \cite{vaidya1989new,vaidya1996new}. Vaidya's method has a linear rate of convergence (without any regularization) and outperforms the accelerated method when dealing with small dimension problems \cite{bubeck_book}. Moreover, Vaidya's method does not accumulate an error in gradient value \cite{Gladin2021SolvingSM} and hence is more robust than the accelerated method.

We build a new way for CMDP problems to estimate the quality of the primal solution from the dual one. To the best of our knowledge, the developed technique is also new for standard convex (concave) inequalities constrained problems and quite different from the technique that was used in \cite{lanarcpo}. This technique can be applied to any linear convergent algorithms for the dual problem.
Moreover, we improve $|\Ac|$-times the bound on the Lipschitz gradient constant of the dual function from \cite{lanarcpo}. In Lemma 7 \cite{lanarcpo} the authors formulate the correct result, but in fact, they prove a $|\Ac|$-times worse result than it was formulated. We give accurate proof of Lemma 7 by using the result from Appendix of \cite{juditsky2005recursive}. 

\eg{Another important point is that our proposed method allows to obtain the final policy in a straightforward way by performing a call to the same policy optimizer which is used on every iteration.
% (line \ref{final_policy} in Algorithm \ref{alg:vaidya}).
By contrast, the algorithm from \cite{lanarcpo} can obtain the final policy formally only in the finite setting, by calculating $\nu_{\rho}^{\pi}$ in $\abs{\Sc} \abs{\Ac}$ calls to value oracle (the notation is introduced below), which loses its motivation with transition to continuous state spaces.}

% As well as in \cite{lanarcpo} the proposed approach
Similarly to \cite{lanarcpo}, our proposed method
can be applied to a wider class of nonconvex/nonconcave constrained problems with strong duality (zero duality gap) and uniqueness of the solution of the auxiliary problem, which relates the primal variables with the dual ones.

Finally, we demonstrate by numerical experiments that our proposed algorithm indeed outperforms AR-CPO from \cite{lanarcpo} when $m$ is not too big. 

\section{Preliminaries}\label{sec_prelim}
\subsection{Markov Decision Process}
A Markov decision process (MDP) is determined by a five-tuple $(\mathcal{S}, \mathcal{A}, \mathrm{P}, r, \gamma)$, where $\mathcal{S}$ is the state space, $\mathcal{A}$ is the action space, $\mathrm{P}$ is the transition kernel, $r$ is the reward function and $\gamma \in(0,1)$ is the discount factor. Assume that $\mathcal{S}$ and $\mathcal{A}$ are finite with cardinality $|\mathcal{S}|$ and $|\mathcal{A}|$, respectively. The initial state $s_{0}$ follows a distribution $\rho$. At any time $t \in \mathbb{N}_{+}$, an agent takes an action $a_{t} \in \mathcal{A}$ at state $s_{t} \in \mathcal{S}$, after which, according to the distribution $\mathrm{P}\left(s_{t} \mid s_{t-1}, a_{t-1}\right)$, the environment transits to the next state and the agent receives a reward $r\left(s_{t}, a_{t}\right)$.  The goal is to maximize the expected accumulated discounted reward: $$\mathbb{E}\left[\sum_{t=0}^{\infty} \gamma^{t} r\left(s_{t}, a_{t}\right)\right].$$ 

A stationary policy maps a state $s \in \mathcal{S}$ to a distribution $\pi(\cdot \mid s)$ over $\mathcal{A}$, which does not depend on time $t$. For a given policy $\pi$, its value function for any initial state $s \in \mathcal{S}$  is defined as
%$V_{r}^{\pi}(s):=\mathbb{E}\left[\sum_{t=0}^{\infty} \gamma^{t} r\left(s_{t}, a_{t}\right) \mid s_{0}=\right.$ $\left.s, a_{t} \sim \pi\left(a_{t} \mid s_{t}\right), s_{t+1} \sim \mathrm{P}\left(\cdot \mid s_{t}, a_{t}\right)\right]$.
\begin{align*}
V_{r}^{\pi}(s)&:=\mathbb{E}\left[\sum_{t=0}^{\infty} \gamma^{t} r\left(s_{t}, a_{t}\right) \mid s_{0}= s, a_{t} \sim \pi\left(a_{t} \mid s_{t}\right), \right. 
\\ & \qquad\qquad\qquad\qquad\qquad 
  s_{t+1} \sim \mathrm{P}\left(\cdot \mid s_{t}, a_{t}\right)\Big].
\end{align*}

Next, the mathematical expectation is taken with respect to the distribution of the initial state, the expected reward is determined by following policy $\pi$ as 
$$V_{r}^{\pi}(\rho):=\mathbb{E}_{s_{0} \sim \rho}\left[V_{r}^{\pi}\left(s_{0}\right)\right].$$
The discounted state-action visitation distribution defined $\nu_{\rho}^{\pi}$ as follows: 
\begin{align*}
\nu_{\rho}^{\pi}(s, a)&:=(1-\gamma) \sum_{t=0}^{\infty} \gamma^{t} \operatorname{Pr}\Big\{s_{t}=s, a_{t}=a \mid s_{0} \sim \rho, 
\\
& \qquad\qquad\quad
a_{t} \sim \pi\left(\cdot \mid s_{t}\right), s_{t+1} \sim \mathrm{P}\left(\cdot \mid s_{t}, a_{t}\right)\Big\},
\end{align*}
for any $s \in \mathcal{S}, a \in \mathcal{A}$. The value function thus can be equivalently written as
$$
V_{r}^{\pi}(\rho)=\frac{\sum_{s \in \mathcal{S}, a \in \mathcal{A}} \nu_{\rho}^{\pi}(s, a) r(s, a)}{1-\gamma}=\frac{\left\langle\nu_{\rho}^{\pi}, r\right\rangle_{\mathcal{S} \times \mathcal{A}}}{1-\gamma},
$$
where $\langle\cdot, \cdot\rangle_{\mathcal{S}} \times \mathcal{A}$ denotes the inner product over the space $\mathcal{S} \times \mathcal{A}$ by reshaping $\nu_{\rho}^{\pi}$ and $r$ as $|\mathcal{S}| \times|\mathcal{A}|$-dimensional vectors, and we omit the subscripts and use $\langle\cdot, \cdot\rangle$ when there is no confusion.

\subsection{Constrained MDP}
\label{CMDP_setting}

The difference between CMDP and MDP is that the reward is an $(m+1)$-dimensional vector: 
$$r(s, a)= \left[r_{0}(s, a), r_{1}(s, a), \ldots, r_{m}(s, a)\right]^{\top}.$$
Each reward function $r_{i},\ i=0,1, \ldots, m$ is positive and finite, $$r_{i, \max }:=\max _{s \in \mathcal{S}, a \in \mathcal{A}}\left\{r_{i}(s, a)\right\},$$ for $i=0,1, \ldots, m$; $R_{\max }:=\sqrt{\sum_{i=1}^{m} r_{i, \max }^{2}} .$

Then the value function defined with respect to the $i$-th component of the reward vector $r$ as follows 
\begin{align*}
V_{i}^{\pi}(s)
:= \mathbb{E} \Big[\sum_{t=0}^{\infty} \gamma^{t} r_{i}\left(s_{t}, a_{t}\right)\, \big|\, & s_{0}=s, a_{t} \sim \pi\left(a_{t} \mid s_{t}\right), 
\\ & s_{t+1} \sim \mathrm{P}\left(\cdot \mid s_{t}, a_{t}\right)\Big]
\end{align*}
and $V_{i}^{\pi}(\rho) = \mathbb{E}_{s_{0} \sim \rho}\left[V_{i}^{\pi} \left(s_{0} \right)\right],\ $ for $i=0,1, \ldots, m$. The objective of the constrained MDP is to solve the following constrained optimization problem:
\begin{equation}
\begin{aligned}
    \max_{\pi \in \Pi}\, & V_{0}^{\pi}(\rho)\\
    \text{s.t. } & V_{i}^{\pi}(\rho) &\geq c_{i}, \quad i=1, \ldots, m,
    \label{optprob}
\end{aligned}
\end{equation}
$\Pi=\left\{\pi \in \mathbb{R}^{|\mathcal{S}||\mathcal{A}|}: \sum_{a \in \mathcal{A}} \pi(a \mid s)=1,\ \pi(a \mid s) \geq 0\right.$, $\forall(s, a) \in \mathcal{S} \times \mathcal{A}\}$ is the set of all stationary policies.\\
Let $\pi^{*}$ denote the optimal policy for the problem \eqref{optprob}. The goal is to 
% solve the CMDP problem in \eqref{optprob} and
find an $\epsilon$-optimal policy defined as follows. 
\begin{definition}
A policy $\tilde{\pi}$ is $\epsilon$-optimal if its corresponding optimality gap and the constraint violation satisfy
$$
V_{0}^{*}(\rho)-V_{0}^{\tilde{\pi}}(\rho) \leq \epsilon ; \text { and }\left\|\left[c-V^{\tilde{\pi}}(\rho)\right]_{+}\right\|_{2} \leq \epsilon,
$$
where $V_{0}^{*}(\rho)$ is the optimal value of \eqref{optprob}, $$V^{\tilde{\pi}}(\rho):=\left[V_{1}^{\tilde{\pi}}(\rho), \ldots, V_{m}^{\tilde{\pi}}(\rho) \right]^{\top}$$
and $c:=\left[c_{1}, \ldots, c_{m}\right]^{\top}.$
\end{definition}

\subsection{Notation}
Let $I_m$ denote $m\times m$ identity matrix, $\mathbf{1}_m \in \mathbb{R}^m$ be a vector of ones. The set of nonnegative real numbers is denoted by $\R_+$. Notation $\operatorname{int} P$ is used for the interior of a set $P \subseteq \R^m$. Given a vector $x \in \R^m$, let $\| x \|_p, p \in [1, \infty]$ denote the $p$-norm of $x$, $[x]_+$ be defined by $\left([x]_+\right)_i=\max\{0, x_i\}, i=1, \ldots, m$. For two vectors $x,y \in \R^m$, inner product is denoted by $\langle x, y\rangle$ or $x^\top y$.
Given two functions $f(\epsilon)$ and $g(\epsilon)$, we write $f(\epsilon)=\mathcal{O}(g(\epsilon))$ if there exists some constant $C>0$, such that $f(\epsilon) \leq C g(\epsilon)$ for small enough $\epsilon$. $\tilde{\mathcal{O}}(g(\epsilon))(\cdot)$ means $\mathcal{O}(\cdot)$ up to logarithmic factor in a small power (usually 1 or 2). Bold symbol $\pmb{\pi}$ denotes the constant ($\pmb{\pi} = 3.1415\ldots$) contrary to plain $\pi$ which indicates policy. $\log(\cdot)$ is the natural logarithm.

\section{Cutting-plane algorithm for CMDP}
In this section, we introduce the Cutting-plane algorithm for CMDP which is presented in Algorithm \ref{alg:vaidya}.
The algorithm assumes access to:

\begin{enumerate}
    \item Oracles, sufficent to run 
    Natural policy gradient algorithm (see Appendix \ref{NPGs}),
    for our MDP with arbitrary rewards.
    For example, access to exact gradient of value function
w.r.t. softmax policy parametrization for any vector
of rewards, and exact Fisher information matrix for a
softmax-parametrized policy. Or, in our finite setting, also access
to soft Q-functions is enough.
    \item Exact value function of a policy w.r.t
    to constraints' reward vectors $r_1, \dots, r_m$. 
\end{enumerate}

In the algorithm, policies may be stored as full 
softmax parametrizations (\ref{softmax_parametrization}),
or directly as  vectors
$\pi(a\vert s), \,\,\forall (s,a) \in \Sc \times \Ac$
(both ways are equivalent and can be calculated one from another).

The core idea is to consider the entropy-regularized Lagrange function
\begin{equation*}
    \L_{\tau}(\pi, \lambda) := V_{0}^{\pi}(\rho)+\left\langle\lambda, V^{\pi}(\rho)-c\right\rangle + \tau \H(\pi),
\end{equation*}
where $\lambda \in \R^m_+$ is the vector of dual variables,
\begin{equation}\label{constraints_notation}
    V^{\pi}(\rho)=\left[V_{1}^{\pi}(\rho), \ldots, V_{m}^{\pi}(\rho)\right]^{\top}\; \text{ and }\; c=\left[c_{1}, \ldots, c_{m}\right]^{\top}
\end{equation}
are respectively vectors of constraints and constraint thresholds,
% $V^{\pi}(\rho)=\left[V_{1}^{\pi}(\rho), \ldots, V_{m}^{\pi}(\rho)\right]^{\top}$ is the vector of constraints, $c=\left[c_{1}, \ldots, c_{m}\right]^{\top}$ is the vector of constraint thresholds,
\begin{multline*}
    \H(\pi)=-\mathbb{E}
    \bigg[\sum_{t=0}^{\infty} \gamma^{t} \log \left(\pi\left(a_{t} \mid s_{t}\right)\right) \Bigm| \\
    s_{0}\sim \rho, a_{t} \sim \pi\left(a_{t} \mid s_{t}\right), s_{t+1} \sim \mathrm{P}\left(\cdot \mid s_{t}, a_{t}\right)\bigg]
\end{multline*}
is the discounted entropy of the policy $\pi$ and $\tau>0$ is the regularization coefficient. The proposed method is based on two components: Vaidya's cutting-plane method \cite{vaidya1989new, vaidya1996new} for solving the dual problem
\begin{equation}\label{dual_probl}
    \min_{\lambda \in \R^m_+} \bigl\{ d_{\tau}(\lambda) := \max_{\pi \in \Pi} \L_{\tau}(\pi, \lambda) \bigr\},
\end{equation}
and an entropy-regularized policy optimizer for solving the inner problem $\max_{\pi \in \Pi} \L_{\tau}(\pi, \lambda)$ on each iteration of the outer loop.
Below is the description of both components.
% Before we describe both components, let us define the set
% \begin{equation}
%     \Lambda := \{ \lambda \in \R^m_+\; |\; \| \lambda \|_1 \leq B_{\lambda} \}\; \text{ with }
%     B_{\lambda}:=\frac{r_{0, \max } + \log |\Ac|}{(1-\gamma) \xi},
% \end{equation}
% and introduce the following assumption:

% \begin{assumption}[Regularized optimal policy uniqueness]\label{assumption_tau_optimal_unique}
%     For any $\tau > 0, \lambda \in \Lambda$ there exists exactly one optimal policy for the problem:
%     \begin{equation}
%         \max_{\pi \in \Pi} \L_{\tau}(\pi, \lambda),
%     \end{equation}
%     which we denote $\pi_{\tau,\lambda}^*$.
% \end{assumption}

% \begin{remark}
% As was proved in \cite{UniqueRegularized}, for any tabular MDP
% there is a unique policy that maximizes 
% $V^{\pi}_{\tau}(s)$ for all $s \in S$ at once, given that $\tau > 0$.
% This policy obviously will be optimal with any initial distribution
% $\rho$, but we assume
% that it will still be unique. We use recurrent equations
% on it that were proved in \cite{UniqueRegularized} as well.
% \end{remark}

\subsection{Entropy-regularized policy optimizer}
To estimate the gradient of the dual function $\nabla d_{\tau}(\lambda)$, one has to solve the problem $\max_{\pi \in \Pi} \L_{\tau}(\pi, \lambda)$. Note that this is equivalent to maximizing an entropy-regularized value function corresponding to a reward function $r_{\lambda}:=r_{0}+\sum_{i=1}^{m} \lambda_{i} r_{i}$. As mentioned in the introduction, entropy regularization enables the linear rate of convergence of an NPG method. In 
% the line \ref{line_NPG} of Algorithm \ref{alg:vaidya},
what follows, 
$\mathrm{NPG}\left(r_{\lambda}, \tau, \delta\right)$ represents 
a call to NPG procedure that learns the policy $\tilde{\pi}_{\tau, \lambda}$ for the entropy-regularized MDP with the regularization coefficient $\tau$ and reward $r_{\lambda}$ up to $\delta$-accuracy in terms of $l_{\infty}$ distance to the unique optimal regularized policy, i.e.,
% $\pi_{\tau, \lambda}^*$, i.e.,
\begin{equation}\label{npg_accuracy}
    \eg{\normbig{\pi_{\tau, \lambda}^* - \tilde{\pi}_{\tau, \lambda}}_\infty < \delta,}
\end{equation}
\eg{where $\pi_{\tau, \lambda}^*:= \argmax_{\pi \in \Pi} \L_{\tau}(\pi, \lambda)$ (existence and uniqueness of such optimal policy is addressed below).}
More details on entropy-regularized policy optimizers are provided in Appendix \ref{NPGs}.

\begin{algorithm*}[t] %[h!]%[htp]
	\caption{Cutting-plane algorithm for CMDP}
	\label{alg:vaidya}
	\begin{algorithmic}[1]
		\REQUIRE number of outer iterations $T$,
		NPG accuracy $\delta$ (see \eqref{npg_accuracy}),
% 		initial Vaidya polytope --
		pair $(A_0, b_0) \in \mathbb{R}^{k_0\times m} \times \mathbb{R}^{k_0}$,
		algorithm parameters $\eta \leq 10^{-4}$,
		$\zeta \leq 10^{-3} \cdot \eta$.
		\FOR{$t=0,\, \dots, \, T-1$}
		    \STATE $\lambda_t:=\operatorname{VolCenter}(A, b)$
		    \STATE Compute
    		    $H_t^{-1} := \left( H(\lambda_t; A_t,b_t) \right)^{-1}$ and
    		    $\displaystyle \left\{ \sigma_{i}(\lambda_t; A_t,b_t) \right\}_{i=1}^{k_t}$ %see
    		  %  \eqref{hess} and
    		  %  \eqref{vaidya_sigmas},
		    \STATE $\displaystyle i_t := \argmin_{1 \leq i \leq k_t} \sigma_{i}(\lambda_t; A_t,b_t)$
		    \IF {$\sigma_{i_t}(\lambda_t; A_t,b_t) < \zeta$}
		        \STATE Obtain $\left(A_{t+1}, b_{t+1}\right)$ by removing the $i_t$-th row from $\left(A_t, b_t\right)$,
		        \STATE $k_{t+1} := k_t - 1.$
		    \ELSE
		        \IF {$\lambda_t \in \mathbb{R}_+^{\eg{m}}$}
    		        \STATE $\pi_t := \mathrm{NPG}\left(r_0 + \langle \lambda_t, r \rangle, \tau, \delta \right)$ \label{line_NPG}
    		        \STATE $\widehat{\nabla}_{t} := c - V^{\pi_t}(\rho)\; $ \eg{ (see notation in \eqref{constraints_notation}),} \label{line_antigrad} %\label{line_grad}
		        \ELSE
    		        \STATE Define $\widehat{\nabla}_{t}$ as the vector with components\label{separ_oracle_vmdp}
    		        $$ (\widehat{\nabla}_{t})_{i}= \begin{cases}1, & (\lambda_t)_{i}<0, \\ 0, & (\lambda_t)_{i} \geq 0,\end{cases}\quad i=1,\ldots, m. $$
		        \ENDIF
		        \STATE Find such $\beta_t \in \mathbb{R}$ that $\widehat{\nabla}_{t}^\top \lambda_t \geq \beta_t$ from the equation
		        $$\frac{\widehat{\nabla}_{t}^\top H_t^{-1} \widehat{\nabla}_{t}}{(\widehat{\nabla}_{t}^\top \lambda_t - \beta_t)^2} = \frac{1}{2} \sqrt{\eta \zeta},$$
		        \STATE $A_{t+1} := \begin{pmatrix}A_t\\ \widehat{\nabla}_{t}^{\top}\end{pmatrix},\;\;b_{t+1} := \begin{pmatrix}b_t\\\beta_t\end{pmatrix},\;\;k_{t+1} = k_t + 1$.
		    \ENDIF
		\ENDFOR
		\STATE $\lambda_T = \argmin\limits_{\lambda \in \{\lambda_0, ..., \lambda_{T-1}\}} d_{\tau}(\lambda)$
		\STATE $\pi_T := \mathrm{NPG}\left(r_0 + \langle \lambda_T, r \rangle, \tau, \delta \right)$\label{final_policy}
		\ENSURE $\pi_T$.
	\end{algorithmic}
\end{algorithm*}

\subsection{Vaidya's cutting-plane method}
Vaidya's cutting-plane method \cite{vaidya1989new, vaidya1996new} in an algorithm for a convex optimization problem with complexity $\O(m \log \frac{m}{\epsilon})$, which makes it a good choice for formulations with a small or moderate dimensionality like the dual problem \eqref{dual_probl}. Moreover, it has been shown that the method can be used with an inexact subgradient, and it does not accumulate the error \cite{Gladin2021SolvingSM}. This makes it very suitable for the problem \eqref{dual_probl}, since the gradient of the dual function is computed approximately.

% The proposed method for the problem \eqref{optprob} is based on Vaidya's cutting plane method.
% Full description of the original Vaidya's cutting plane method can be found in the Appendix \ref{vaidya_descr}; the present section only introduces necessary formulas.
We will now introduce necessary formulas and present the proposed method for the problem \eqref{optprob}.
% and presents the cutting plane algorithm for CMDP.
For a matrix $A \in \mathbb{R}^{k \times m}$ with rows $a_{i}^{\top}, i=1, \ldots, k$, and a vector $b \in \mathbb{R}^k$, define
\begin{align}
    P(A,b) &:= \{\lambda \in \mathbb{R}^m: \, A\lambda \geq b\}, \\
    \label{hess}
    H(\lambda; A,b) &:= \sum_{i=1}^{k} \frac{a_{i} a_{i}^{\top}}{\left(a_{i}^{\top} \lambda-b_{i}\right)^{2}},\\
    \label{vaidya_sigmas}
    \sigma_{i}(\lambda; A,b) &:= \frac{a_{i}^{\top} \left(H(\lambda; A,b)\right)^{-1} a_{i}}{\left(a_{i}^{\top} \lambda-b_{i}\right)^{2}}, \quad 1 \leq i \leq k,
\end{align}
\begin{multline}
    \label{vol_center}
    \operatorname{VolCenter}(A, b) := \\ \argmin_{\lambda \in \operatorname{int} P(A, b)} \big\{ \eg{\mathcal{V}}(\lambda; A,b) := \frac{1}{2} \log \left(\operatorname{det} H(\lambda; A,b)\right) \big\},
\end{multline}
where $\operatorname{det}H(\lambda; A,b)$ denotes the determinant of $H(\lambda; A,b)$. Since $\eg{\mathcal{V}}$ is a self-concordant function of $x$, it can be efficiently minimized with Newton-type methods. The algorithm starts with a pair $(A_0, b_0) \in \mathbb{R}^{k_0\times m} \times \mathbb{R}^{k_0}$, such that $P(A_0, b_0)$ contains the search space. We refer the reader to Appendix \ref{vaidya_descr} for more information on the original Vaidya's method and its parameters.
% The proposed method is summarized in Algorithm \ref{alg:vaidya}.

\section{Convergence results for the proposed algorithm}\label{sec_converg}
First, we introduce technical assumptions on our CMDP instance
$(\Sc, \Ac, P, \gamma, r_0, r_1, \ldots, r_m, \rho)$,
that are widely used in reinforcement learning literature.
\begin{assumption}[Slater Condition]\label{assumption_slater}
    There exists a constant $\xi \in \mathbb{R}_{+}$, and at least one policy $\pi_{\xi} \in \Pi$, such that for all $i=1, \ldots, m,\, V_{i}^{\pi_{\xi}}(\rho) \geq c_{i}+\xi$.
\end{assumption}
Slater condition asserts that there exists a strictly feasible policy.
Define the set
\begin{multline}
    \Lambda := \{ \lambda \in \R^m_+\; |\; \| \lambda \|_1 \leq B_{\lambda} \}\; \text{ with } \\
    B_{\lambda}:=\frac{r_{0, \max } + \log |\Ac|}{(1-\gamma) \xi}.
\end{multline}

\begin{assumption}[Regularized optimal policy uniqueness]\label{assumption_tau_optimal_unique}
    For any $\tau > 0, \lambda \in \Lambda$ there exists exactly one optimal policy for the problem:
    \begin{equation}
        \max_{\pi \in \Pi} \L_{\tau}(\pi, \lambda),
    \end{equation}
    which we call $\pi_{\tau,\lambda}^*$.
\end{assumption}

\begin{remark}
As was proved in \cite{UniqueRegularized}, for any tabular MDP
there is a unique policy that maximizes 
$V^{\pi}_{\tau}(s)$ for all $s \in S$ at once, given that $\tau > 0$.
This policy obviously will be optimal with any initial distribution
$\rho$, but we assume
that it will still be unique. We use recurrent equations
on it that were proved in \cite{UniqueRegularized} as well.
\end{remark}

\begin{assumption}[Uniform Ergodicity]\label{assumption_ergodicity}
    For any $\lambda \in \Lambda$, the Markov chain induced by the policy $\pi_{\tau, \lambda}^{*}$ and the Markov transition kernel is uniformly ergodic, i.e., there exist constants $C_{M}>0$ and $0<\beta<1$ such that for all $t \geq 0$,
    $$
    \sup _{s \in \mathcal{S}} d_{T V}\left(\mathbb{P}\left(s_{t} \in \cdot \mid s_{0}=s\right), \chi_{\pi_{\tau, \lambda}^{*}}\right) \leq C_{M} \beta^{t},
    $$
    where $\chi_{\pi_{\tau, \lambda}^{*}}$ is the stationary distribution of the MDP induced by policy $\pi_{\tau, \lambda}^{*}$, and $d_{T V}(\cdot, \cdot)$ is the total variation distance.
\end{assumption}
Convergence rate of Algorithm \ref{alg:vaidya} in terms of the optimality gap $V_{0}^{*}(\rho)-V_{0}^{\pi_T}(\rho)$ and the constraint violation $\left[c-V^{\pi_T}(\rho)\right]_{+}$ is described by the following theorem. The proof can be found in Appendix \ref{main_proof}.

\begin{theorem}\label{convergence_theorem}
Suppose Assumptions \ref{assumption_slater}, \ref{assumption_tau_optimal_unique} and \ref{assumption_ergodicity}
hold, let $T\in \mathbb{N}$ be fixed and $\epsilon$ denote the value
\begin{equation}\label{dual_unopt}
    \epsilon := \frac{2m^2 B_{\lambda}}{\zeta} \left(\xi + \frac{\sqrt{m} R_{max}}{1-\gamma} \right) \exp \left( \frac{\log \pmb{\pi} - \zeta T}{2m} \right),% + \delta,
\end{equation}
where $\pmb{\pi}$ denotes the constant ($\pmb{\pi} = 3.1415\ldots$).
The Cutting-plane algorithm for CMDP (Algorithm~\ref{alg:vaidya}) with parameters
%\tau := \sqrt[3]{\epsilon}
% and choose
\begin{gather}
    \notag \eg{k_0 := m+1,}\quad
    A_0 := \left[\begin{array}{c}
        -I_m \\
        1
    \end{array}\right],\\
    \quad b_0 := \left[\begin{array}{c}
        B_{\lambda} \mathbf{1}_m \\
        m B_{\lambda}
    \end{array}\right],\quad
    \tau := \eg{\min\{1, \sqrt[3]{\epsilon}\}},
\end{gather}
% Then with
% \begin{equation}\label{dual_unopt}
%     \epsilon := \frac{2m^2 B_{\lambda}}{\zeta} \left(\xi + \frac{\sqrt{m} R_{max}}{1-\gamma} \right) \exp \left( \frac{\log \pi - \zeta T}{2m} \right) + \delta,\quad \tau := \sqrt[3]{\epsilon},
% \end{equation}
% we have
\eg{number of outer iterations T and NPG accuracy $\delta>0$ (see \eqref{npg_accuracy})}
provides the following convergence guarantee of the optimality gap and the constraint violation:
\begin{align}
    \label{opt_gap}
    V_{0}^{*}(\rho) - V_{0}^{\pi_T}(\rho) &\leq \frac{B_{\lambda} R_{max} \sqrt{2 m L_{\beta}}}{1-\gamma} \sqrt{\epsilon^{2/3}+6\gamma\delta} \\
    &+2\epsilon \nonumber +18\gamma \delta \sqrt[3]{\epsilon} + \frac{\log |\Ac|}{1-\gamma} \sqrt[3]{\epsilon} \\
    &+ \sqrt{m} B_{\lambda}
    \eg{\frac{L_{\beta}\abs{\Ac}R_{\max}}
    {(1-\gamma)} \delta},\\
    % (1 +\sqrt{m} B_{\lambda} R_{max} L_{\beta})\delta, \\
    \label{constr_viol}
    \notag \bigl\| [ c- V^{\pi_T}(\rho) ]_+ \bigr\|_2 &\leq \frac{2 R_{max}^2 L_{\beta}}{1-\gamma} (\epsilon^{2/3}+6\gamma\delta) \\
    &+\eg{\frac{L_{\beta}\abs{\Ac}R_{\max}}
    {(1-\gamma)} \delta},
\end{align}
where  $$L_{\beta}:=\left\lceil\log _{\beta}\left(C_{M}^{-1}\right)\right\rceil+(1-\beta)^{-1}+1.$$
\end{theorem}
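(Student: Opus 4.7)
The plan is to decompose the overall error into three sources and then balance them through the choice of $\tau$: (i) the inexactness of Vaidya's cutting-plane method applied to the regularized dual problem \eqref{dual_probl}, (ii) the regularization bias introduced by the entropy term $\tau\H(\pi)$, and (iii) the NPG inexactness $\delta$ that propagates both into the subgradient oracle used by Vaidya and into the final policy returned on line~\ref{final_policy}.

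First, I would establish that the Vaidya iterates produce
$d_\tau(\lambda_T) - \min_{\lambda \in \R^m_+} d_\tau(\lambda) \leq \epsilon$, with $\epsilon$ as in \eqref{dual_unopt}. The factor $\exp(-\zeta T/(2m))$ reflects the standard linear volume-shrinkage rate of Vaidya's method, while the prefactor absorbs the initial volume of $P(A_0,b_0)$ (of order $B_\lambda^m$) and the Lipschitz constant of $d_\tau$ on $\Lambda$, which I would bound by $\xi + \sqrt{m}R_{\max}/(1-\gamma)$ via the reward bound. Here I would invoke the robustness property of Vaidya's method with inexact separating hyperplanes established in \cite{Gladin2021SolvingSM}: the NPG error $\delta$ in the subgradient on line~\ref{line_antigrad} does not accumulate but only contributes an additive perturbation, which I would track separately.

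Next, I would relate dual suboptimality to the primal quantities. By strong duality for the regularized CMDP (which holds because the set of occupation measures is convex and compact), $d_\tau^* = \max_\pi \{V_0^\pi(\rho) + \tau\H(\pi) : V^\pi(\rho) \geq c\}$, and since $\H(\pi) \in [0,\log|\Ac|/(1-\gamma)]$, this differs from $V_0^*(\rho)$ by at most $\tau\log|\Ac|/(1-\gamma)$, producing the additive $\log|\Ac|/(1-\gamma)\cdot \sqrt[3]\epsilon$ term. For the optimality gap I would then write
\begin{align*}
V_0^*(\rho) - V_0^{\pi_T}(\rho) &\le \bigl[V_0^*(\rho) - d_\tau^*\bigr] + \bigl[d_\tau(\lambda_T) - \L_\tau(\pi_T,\lambda_T)\bigr] \\
&\quad + \bigl[\L_\tau(\pi_T,\lambda_T) - V_0^{\pi_T}(\rho)\bigr],
\end{align*}
where the first bracket is controlled by the regularization bias, the second by the NPG gap between $\pi_T$ and $\pi^*_{\tau,\lambda_T}$ (using that value functions are Lipschitz in the policy with constant $L_\beta|\Ac|R_{\max}/(1-\gamma)$ under Assumption~\ref{assumption_ergodicity} via Mitrophanov's perturbation bound, with the corrected factor $|\Ac|$ from Appendix of \cite{juditsky2005recursive} as discussed in Section~\ref{contr}), and the third by $\|\lambda_T\|_1 \le B_\lambda$ times the residual constraint violation plus $\tau\H$.

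For the constraint violation, I would exploit smoothness of $d_\tau$. Writing $\nabla d_\tau(\lambda) = c - V^{\pi^*_{\tau,\lambda}}(\rho)$ and using the $1/\tau$-flavored smoothness (obtained by combining Nesterov smoothing with the Lipschitzness of the visitation distribution in the policy), a near-minimizer $\lambda_T$ of $d_\tau$ over $\R^m_+$ must satisfy $\|[\nabla d_\tau(\lambda_T)]_+\|_2^2 \le 2L \cdot \epsilon$, which translates into $\|[c-V^{\pi^*_{\tau,\lambda_T}}(\rho)]_+\|_2^2 \lesssim R_{\max}^2 L_\beta \epsilon /(1-\gamma)$; swapping $\pi^*_{\tau,\lambda_T}$ for $\pi_T$ adds the $\delta$-dependent NPG term. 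The choice $\tau = \min\{1,\sqrt[3]\epsilon\}$ then balances the $\tau$-bias against the $\epsilon/\tau$-type smoothness factor, yielding the stated $\epsilon^{2/3}$ dependence.

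The main obstacle will be the careful bookkeeping in the last step: one must simultaneously use strong duality, the non-accumulation of gradient error for Vaidya, the corrected $|\Ac|$-free Lipschitz bound on the dual gradient, and the smoothness-to-constraint-violation translation, and then verify that the $\tau\mapsto\sqrt[3]\epsilon$ scaling is the correct compromise. A secondary difficulty is that $d_\tau$ is only smooth with a constant proportional to $1/\tau$, so the constraint-violation argument must use the strongly-convex-primal/smooth-dual duality explicitly rather than the more standard subgradient-KKT route employed in \cite{lanarcpo}.
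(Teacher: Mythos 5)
Your outline matches the paper's architecture in most respects — Vaidya's method with a $\delta$-subgradient (the NPG output yields a $6\tau\gamma\delta$-subgradient of $d_\tau$), restriction of the dual problem to the compact set $\Lambda$ via Lemma \ref{lem_bounded_dual}, the constraint-violation bound from smoothness of $d_\tau$ plus near-optimality of $\lambda_T$ over $\R^m_+$ (this is exactly \eqref{grad_bound} of Lemma \ref{Lemma2} combined with $\nabla d_\tau(\lambda_T)=V^{\pi^*_{\tau,\lambda_T}}(\rho)-c$), the Mitrophanov-based transfer from $\pi^*_{\tau,\lambda_T}$ to $\pi_T$ costing $L_\beta\abs{\Ac}R_{\max}\delta/(1-\gamma)$, and the balancing $\tau=\min\{1,\sqrt[3]{\epsilon}\}$.

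However, there is a genuine gap in your treatment of the optimality gap. In your three-bracket decomposition the only place where the dual suboptimality $d_\tau(\lambda_T)-d_\tau^*\leq \epsilon+6\tau\gamma\delta$ can enter is the third bracket $\L_\tau(\pi_T,\lambda_T)-V_0^{\pi_T}(\rho)=\langle\lambda_T, V^{\pi_T}(\rho)-c\rangle+\tau\H(\pi_T)$, and your claim that this is controlled by "$\norm{\lambda_T}_1\le B_\lambda$ times the residual constraint violation plus $\tau\H$" is false: the dangerous part of the inner product is the \emph{slack} term $\langle\lambda_T,[V^{\pi_T}(\rho)-c]_+\rangle$, not the violation. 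With zero constraint violation this term can still be of order $B_\lambda R_{\max}/(1-\gamma)$ (a large multiplier on a strictly satisfied constraint), so the bound does not go to zero and your argument cannot produce \eqref{opt_gap}; note also that, as written, your optimality-gap bound would not depend on $\epsilon$ at all. What is needed — and what the paper supplies as its central new ingredient — is an approximate complementary-slackness estimate at the near-optimal dual point: inequality \eqref{dot_prod_bound} of Lemma \ref{Lemma2}, $\langle\lambda_T,\nabla d_\tau(\lambda_T)\rangle\le B_\lambda\sqrt{2mL_d\,(d_\tau(\lambda_T)-d_\tau^*)}+2(d_\tau(\lambda_T)-d_\tau^*)$, proved by plugging the specific points $\lambda'=\lambda_T-\frac{1}{L_d}a_J$ and $\lambda'=\lambda_T-\lambda_{J'}$ (with the index-set decomposition and projections onto $\R^m_+$) into the smoothness inequality for $d_\tau$; combined with Lemma \ref{Lemma1} this yields precisely the $\frac{B_\lambda R_{\max}\sqrt{2mL_\beta}}{1-\gamma}\sqrt{\epsilon^{2/3}+6\gamma\delta}$ and $2\epsilon+18\gamma\delta\sqrt[3]{\epsilon}$ terms of \eqref{opt_gap}. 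You already use the same smoothness-over-the-orthant mechanism for the constraint violation, so the fix is to apply it to the inner product as well rather than to the (slack-blind) violation; without that step the proof of \eqref{opt_gap} does not go through.
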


The value $\epsilon$ in \eqref{dual_unopt} reflects linear convergence of Algorithm~\ref{alg:vaidya} in terms of the dual function. As it can be seen from \eqref{opt_gap} and \eqref{constr_viol}, this also implies linear convergence in terms of value function and constraint violation, if NPG provides appropriate accuracy. Thus, the algorithm results in the following complexity bound
\begin{corollary}\label{main_coroll}
Algorithm \ref{alg:vaidya} outputs an $\epsilon$-optimal policy with respect to both the optimality gap and constraint violation after
\begin{equation}\label{asympt}
    T = \mathcal{O}\left( \frac{m}{\zeta} \log \frac{m\eg{\log\abs{\Ac}}}{
    \eg{(1-\beta)(1-\gamma)\zeta\xi}\epsilon} \right)
\end{equation}
steps. The total number of calls to the policy gradient
oracle made in all NPG calls is:
\begin{multline}\label{asympt_npg}
    \notag N_{oracle} = \mathcal{O}\left(
    T \cdot \eg{\frac{1}{1-\gamma}\log \frac{m\log\abs{\Ac}}{(1-\beta)(1-\gamma)\xi\epsilon}}
    \right)=\\
    \notag = 
    \mathcal{O}\bigg(
  \frac{m}{\eg{(1-\gamma)\zeta}} \cdot \log \frac{m\log\abs{\Ac}}
  {(1-\beta)(1-\gamma)\xi\epsilon}\cdot\\
  \cdot\log \frac{m\log\abs{\Ac}}{(1-\beta)(1-\gamma)\zeta\xi\epsilon}
    \bigg).
\end{multline}
\end{corollary}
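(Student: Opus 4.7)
The plan is to derive the corollary by inverting the bounds \eqref{opt_gap}--\eqref{constr_viol} of Theorem~\ref{convergence_theorem} in terms of the target accuracy. Since the symbol $\epsilon$ inside the theorem denotes the dual suboptimality defined by \eqref{dual_unopt}, while the $\epsilon$ in the corollary is the desired final accuracy, I will write the latter as $\varepsilon$ in this sketch to avoid notational collision.

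The first task is to choose $\epsilon$ (dual error) and $\delta$ (NPG tolerance) as functions of $\varepsilon$ so that every term on the right-hand sides of \eqref{opt_gap} and \eqref{constr_viol} is $\mathcal{O}(\varepsilon)$. Inspection shows that the slowest-decaying term as $\epsilon \to 0$ is the $\sqrt[3]{\epsilon}$ contribution in \eqref{opt_gap}, while $\delta$ enters linearly. Hence setting
\[
    \epsilon = \Theta(\varepsilon^{3}), \qquad \delta = \Theta(\varepsilon),
\]
with hidden constants depending polynomially on $m$, $B_\lambda$, $R_{\max}$, $L_\beta$, $\abs{\Ac}$ and $1/(1-\gamma)$, simultaneously forces every term in both bounds below $\varepsilon$, so that $\pi_T$ becomes $\varepsilon$-optimal in the sense of Definition~1.

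Next, I invert the exponential formula \eqref{dual_unopt}. Imposing $\epsilon \leq \Theta(\varepsilon^3)$, taking logarithms and solving for $T$ gives
\[
    T \geq \frac{2m}{\zeta}\left[\frac{\log \pmb{\pi}}{2m} + \log\!\Bigl(\tfrac{2m^2 B_\lambda}{\zeta}\bigl(\xi+\tfrac{\sqrt{m}R_{\max}}{1-\gamma}\bigr)\Bigr) + 3\log(1/\varepsilon)\right].
\]
Substituting $B_\lambda = (r_{0,\max}+\log\abs{\Ac})/((1-\gamma)\xi)$, recalling that the hidden constants from the previous step carry factors of $L_\beta = \Theta(1/(1-\beta))$, and absorbing everything into a single logarithm yields exactly
$T = \mathcal{O}\bigl((m/\zeta)\log((m\log\abs{\Ac})/((1-\beta)(1-\gamma)\zeta\xi\varepsilon))\bigr)$, which is \eqref{asympt}.

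Finally, Algorithm~\ref{alg:vaidya} performs at most $T+1$ NPG calls (one per outer iteration in line~\ref{line_NPG} plus the final call in line~\ref{final_policy}), each of which must reach accuracy $\delta = \Theta(\varepsilon/\mathrm{poly})$. By the linear convergence of entropy-regularized NPG (the results of Cen et al., recalled in Appendix~\ref{NPGs}), each call costs $\mathcal{O}((1/(1-\gamma))\log(1/\delta))$ policy-gradient oracle calls, and multiplying by $T$ yields the stated $N_{\mathrm{oracle}}$ bound. The only real obstacle is the bookkeeping of the first step: the bounds \eqref{opt_gap} and \eqref{constr_viol} contain several terms with different parameter dependences, and one must verify that the single pair $(\epsilon,\delta)=(\Theta(\varepsilon^3),\Theta(\varepsilon))$ dominates them all with the correct constants; beyond that, inverting \eqref{dual_unopt} and invoking the standard NPG complexity is mechanical.
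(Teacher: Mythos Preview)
Your approach matches the paper's proof almost exactly: split the bounds \eqref{opt_gap}--\eqref{constr_viol} into individual terms, pick $\epsilon$ and $\delta$ so each term is below the target accuracy, invert \eqref{dual_unopt} for $T$, and multiply by the per-call NPG cost from Theorem~\ref{npg_convergence}.

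There is one concrete bookkeeping error. You assert that ``$\delta$ enters linearly'' and therefore take $\delta=\Theta(\varepsilon)$. But the first term of \eqref{opt_gap} is
\[
\frac{B_\lambda R_{\max}\sqrt{2mL_\beta}}{1-\gamma}\sqrt{\epsilon^{2/3}+6\gamma\delta},
\]
so $\delta$ enters as $\sqrt{\delta}$ here. With your choice $\epsilon=\Theta(\varepsilon^3)$ and $\delta=\Theta(\varepsilon)$ this term is $\Theta(\sqrt{\varepsilon})$, not $\Theta(\varepsilon)$, and the optimality-gap bound fails. The fix is to take $\delta=\Theta(\varepsilon^2)$ (with the same polynomial hidden constants); this is precisely what the paper obtains---the leading entry of its $C_\delta$ is $\kappa^2(1-\gamma)^2/(600\gamma B_\lambda^2 R_{\max}^2 m L_\beta)$. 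The correction is harmless for the final result: the NPG cost depends on $\delta$ only through $\log(1/\delta)$, so replacing $\varepsilon$ by $\varepsilon^2$ there changes nothing inside the $\mathcal{O}(\cdot)$.
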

Proof of the corollary is in Appendix \ref{coroll_proof}.

\begin{remark}\eg{
From the proof of the Corollary 1 in Appendix \cite{lanarcpo},  the number of policy gradient method iterations is $$N_{\text{AR-CPO}} = \frac{R_{\max}\sqrt{m}}{((1 - \beta)\xi)^{1/2}(1 - \gamma)^3\epsilon},$$ where we skip not only constants, but also $\log$-factors, which could be close to zero, $R_{\max}^2 \le r_{\max}^2 m$.
Note that for the concurrent paper \cite{liu2021fast} 
$$N_{\text{PMD-PD}} = \frac{1}{(1 - \gamma)\epsilon}\left[\frac{m}{(1 - \gamma)^2} + \frac{1}{\xi}\right]$$
up to $\log$-factors. For our approach, $N_{\text{Vaidya}} = \frac{m}{(1 - \gamma)\zeta}$ up to $\log$-factors (depending on  $\xi$, $\beta$, $\gamma$, $\epsilon$), $\zeta$ – small numerical parameter of Vaidya’s algorithm. 
Thus, from these formulas we may conclude that up to a $\log$-factor, our approach is theoretically better when $\epsilon \lesssim \zeta\cdot (1 - \gamma)^{-2}$.
But in reality this $\log$-factor might be significant.}
\end{remark}

\subsection{Regularization of dual variables}
The proposed approach can also be modified in the following way: the dual problem writes as
\begin{equation}
    \max_{\lambda \in \R^m_+} d_{\tau,\mu}(\lambda):=d_{\tau}(\lambda) + \frac{\mu}{2} \| \lambda \|_2^2,
\end{equation}
where $\mu>0$ is the regularization coefficient. In this case, the vector $\widehat{\nabla}_{t}$ in Line~\ref{line_antigrad} of Algorithm~\ref{alg:vaidya} will be replaced with 
$$\widehat{\nabla}_{t} := c - V^{\pi_t}(\rho) - \mu \lambda_t.$$ If $\mu$ is chosen sufficiently small, the
% bound \eqref{asympt} 
result of Corollary \ref{main_coroll} 
remains true, see Appendix \ref{dual_reg_append} for details.

\begin{figure}[th!]
\centering
     
    \begin{subfigure}[t]{0.48\textwidth}
         \includegraphics[width =  1\textwidth ]{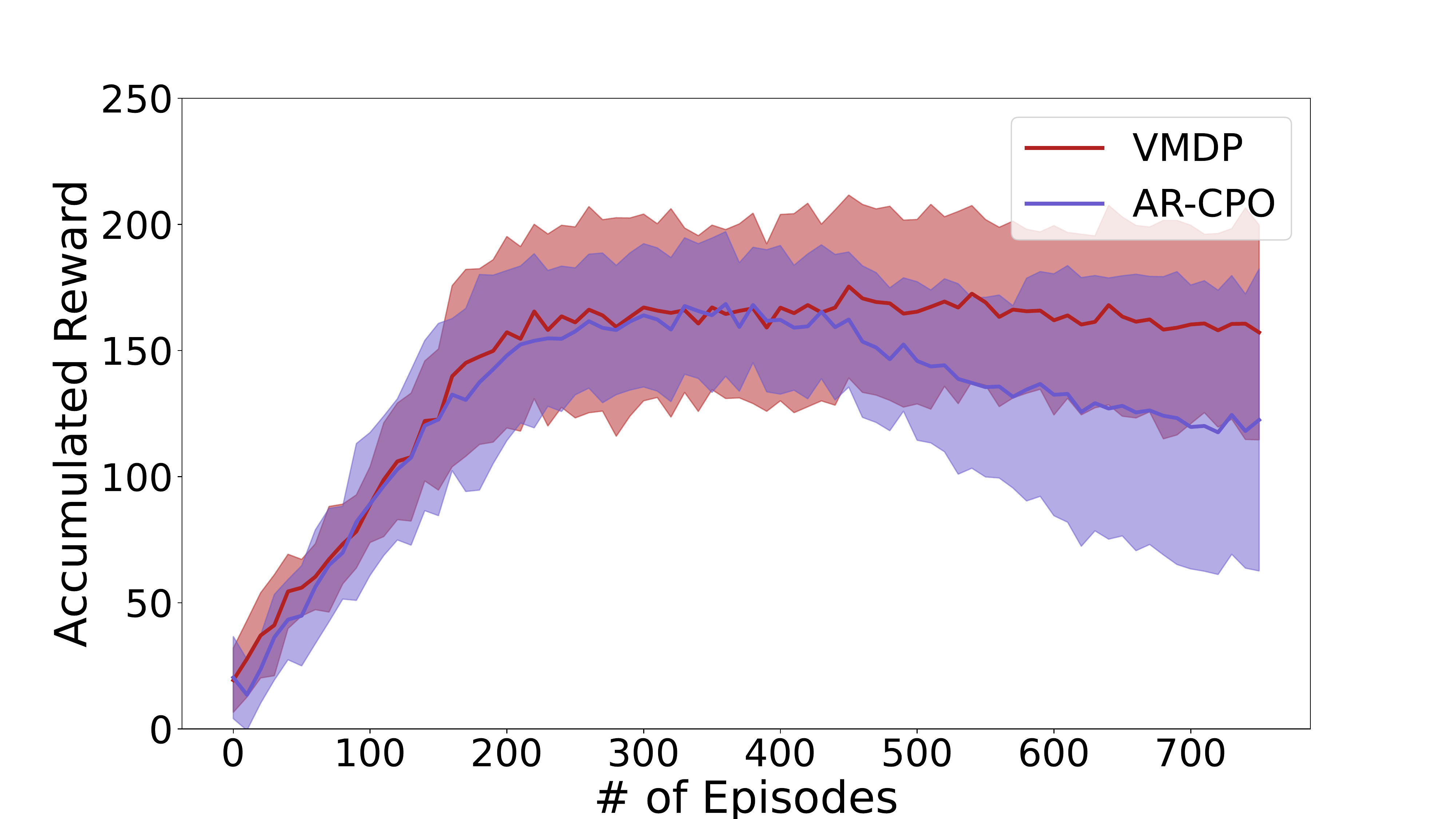}
         \caption{}
         \label{fig:1a}
    \end{subfigure}
    \begin{subfigure}[t]{0.48\textwidth}
         \centering
        \includegraphics[width =   \textwidth]{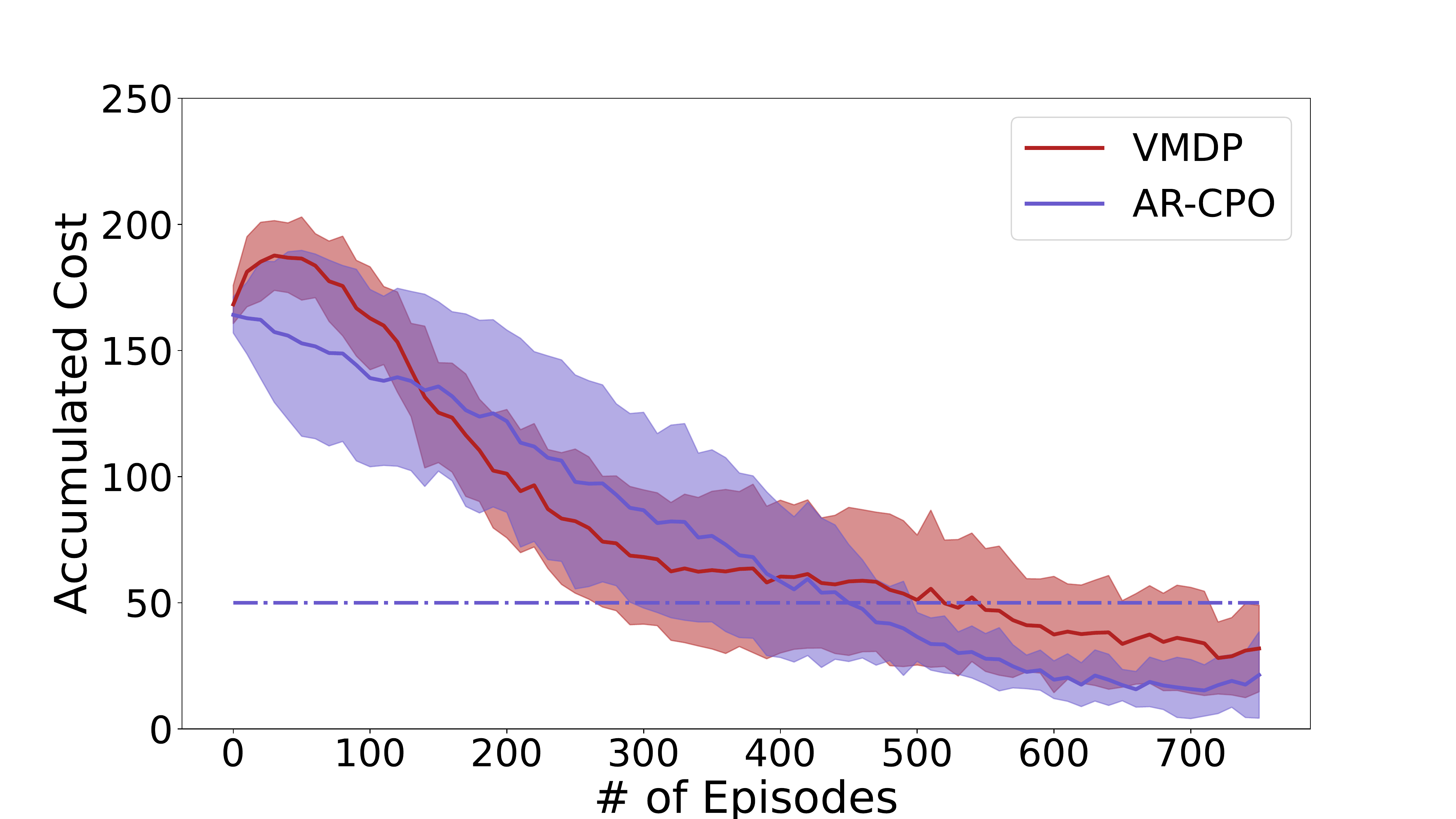}
        \caption{} \label{fig:1b}
    \end{subfigure}
    \begin{subfigure}[t]{0.48\textwidth}
         \centering
        \includegraphics[width =   \textwidth]{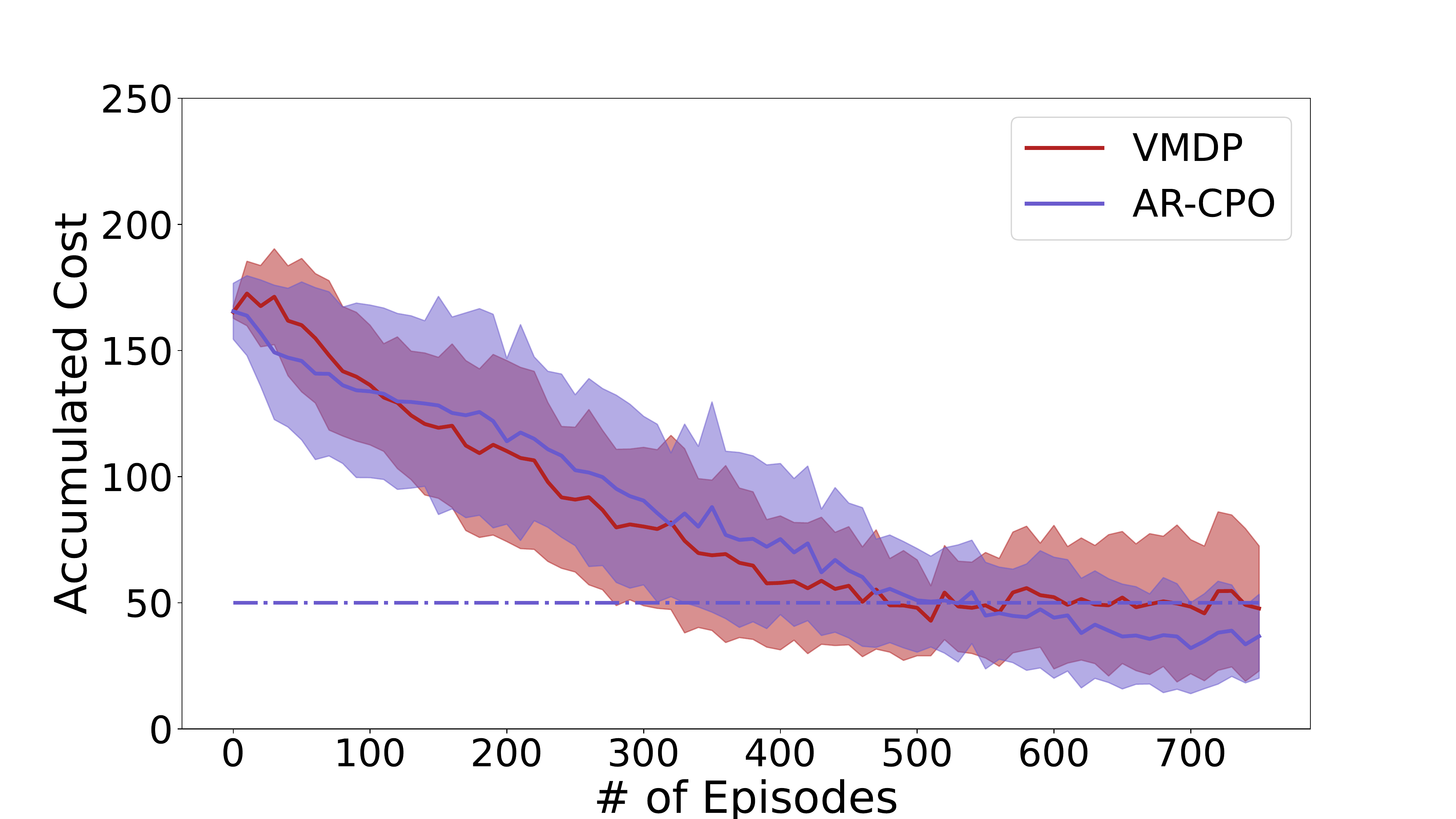}
        \caption{} \label{fig:1c}
    \end{subfigure}
    \caption{ Average performance for VMDP and AR-CPO; the $x$-axis is training iteration.}
    \label{fig:perf1}
\end{figure}

\section{Experiments}\label{sec:main_experiments}

For our experiments we used Acrobot-v1, OpenAI Gym \cite{Mei2020OnTG} environment. This environment contains two links connected linearly to form a chain, with one end of the chain fixed. The joint between the two links is actuated. The goal is to swing the end of the lower link up to a given height. Two additional constraints are implemented in order to have similar environment as in \cite{lanarcpo} for comparison purpose.

In Figure \ref{fig:perf1} we compare our cutting-plane algorithm (VMDP) with the state-of-the-art primal-dual optimization (AR-CPO) method for CMDP \cite{lanarcpo}. In order for a fair comparison, the same neural softmax policy and the trust region policy \cite{pmlr-v37-schulman15} optimization are used in both algorithms.

Similarly to \cite{lanarcpo} we picture the average over 10 random initialized seeds and translucent error bands have the width of two standard deviations. The hyper parameters of AR-CPO algorithm are optimal from \cite{lanarcpo}. More information about experiments and parameters settings can be found in Appendix 
\ref{sec:paramA} and \ref{sec:paramB}.

% \begin{figure}[h!]
%     \begin{subfigure}[b]{0.5\textwidth}
%          \centering
%         \includegraphics[width =  1.0\textwidth]{figures/Costs1.pdf}
%         \caption{} \label{fig:1b}
%     \end{subfigure}
%     \hfill
%     \begin{subfigure}[b]{0.5\textwidth}
%          \centering
%         \includegraphics[width =  1.0\textwidth]{figures/Costs2.pdf}
%         \caption{} \label{fig:1c}
%     \end{subfigure}
%     \caption{Average constraints for VMDP and AR-CPO; the $x$-axis is training iteration.}
%     \label{fig:perf1_constr}
% \end{figure}

\begin{figure}
\centering
    \begin{subfigure}[t]{0.48\textwidth}
         \includegraphics[width =  1.0\textwidth ]{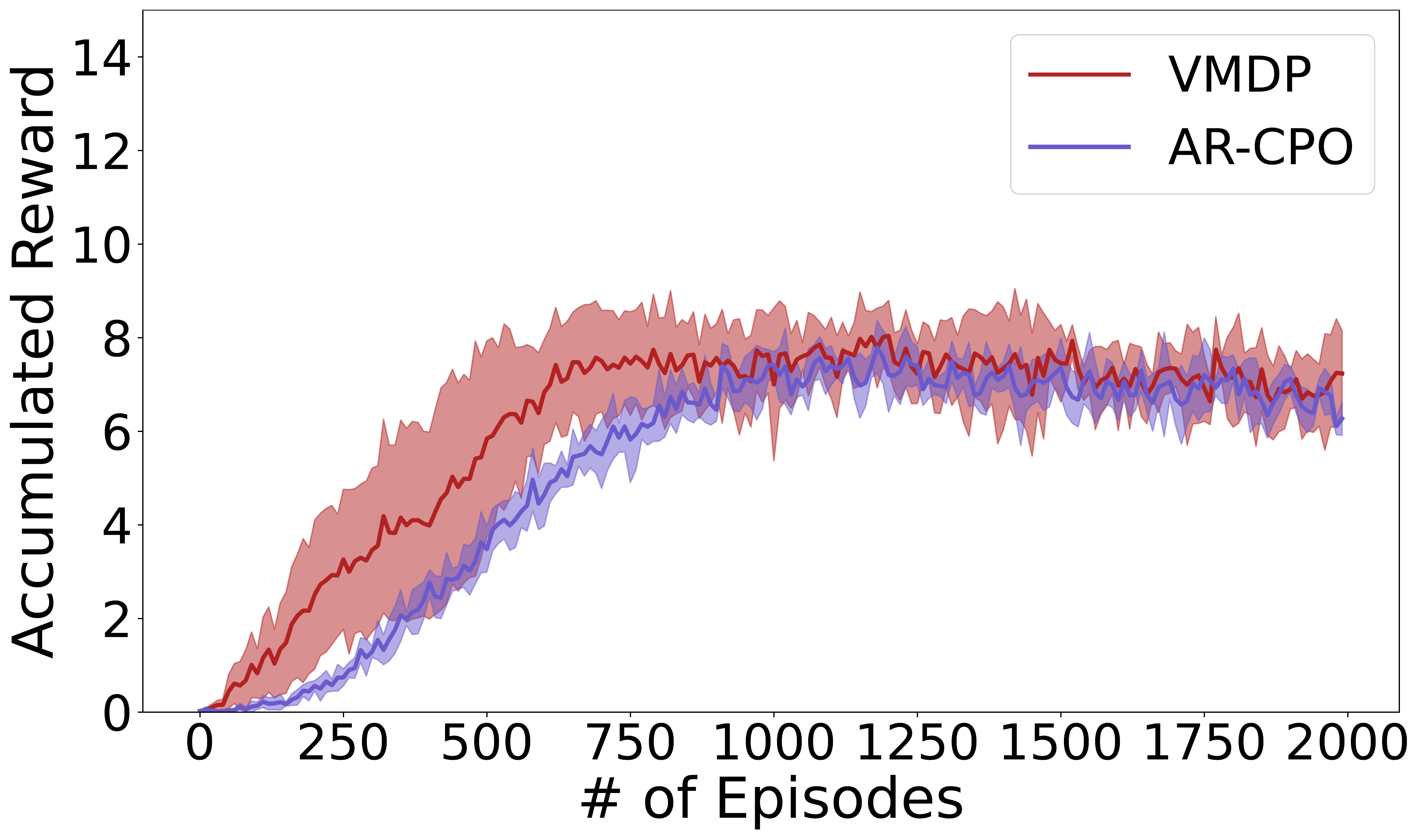}
         \caption{}
         \label{fig:suppl_2a}
    \end{subfigure}
    \begin{subfigure}[t]{0.48\textwidth}
         \centering
        \includegraphics[width =  1.0\textwidth]{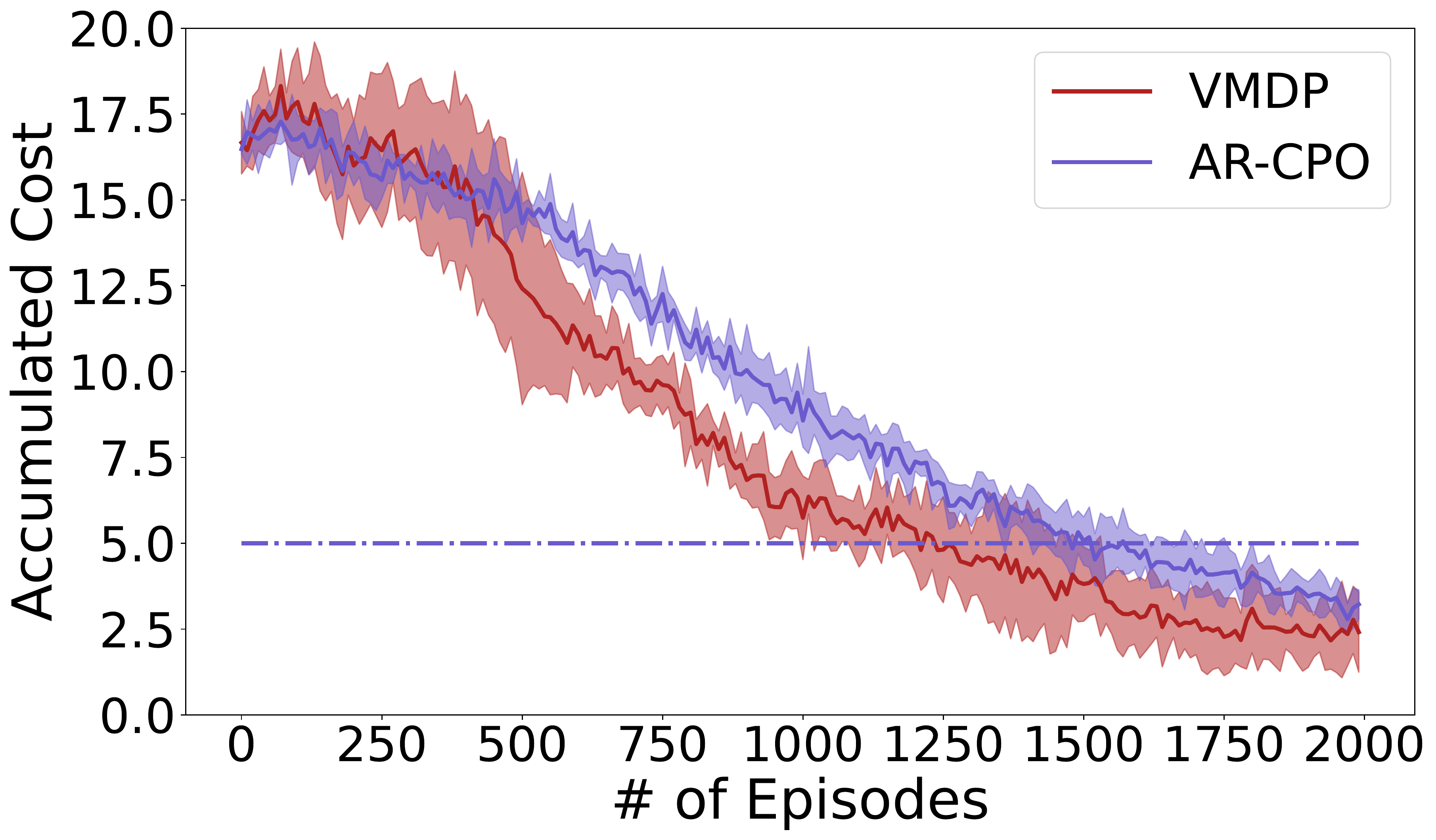}
        \caption{} \label{fig:suppl_2b}
    \end{subfigure}
    \begin{subfigure}[t]{0.48\textwidth}
         \centering
        \includegraphics[width =  1.0\textwidth]{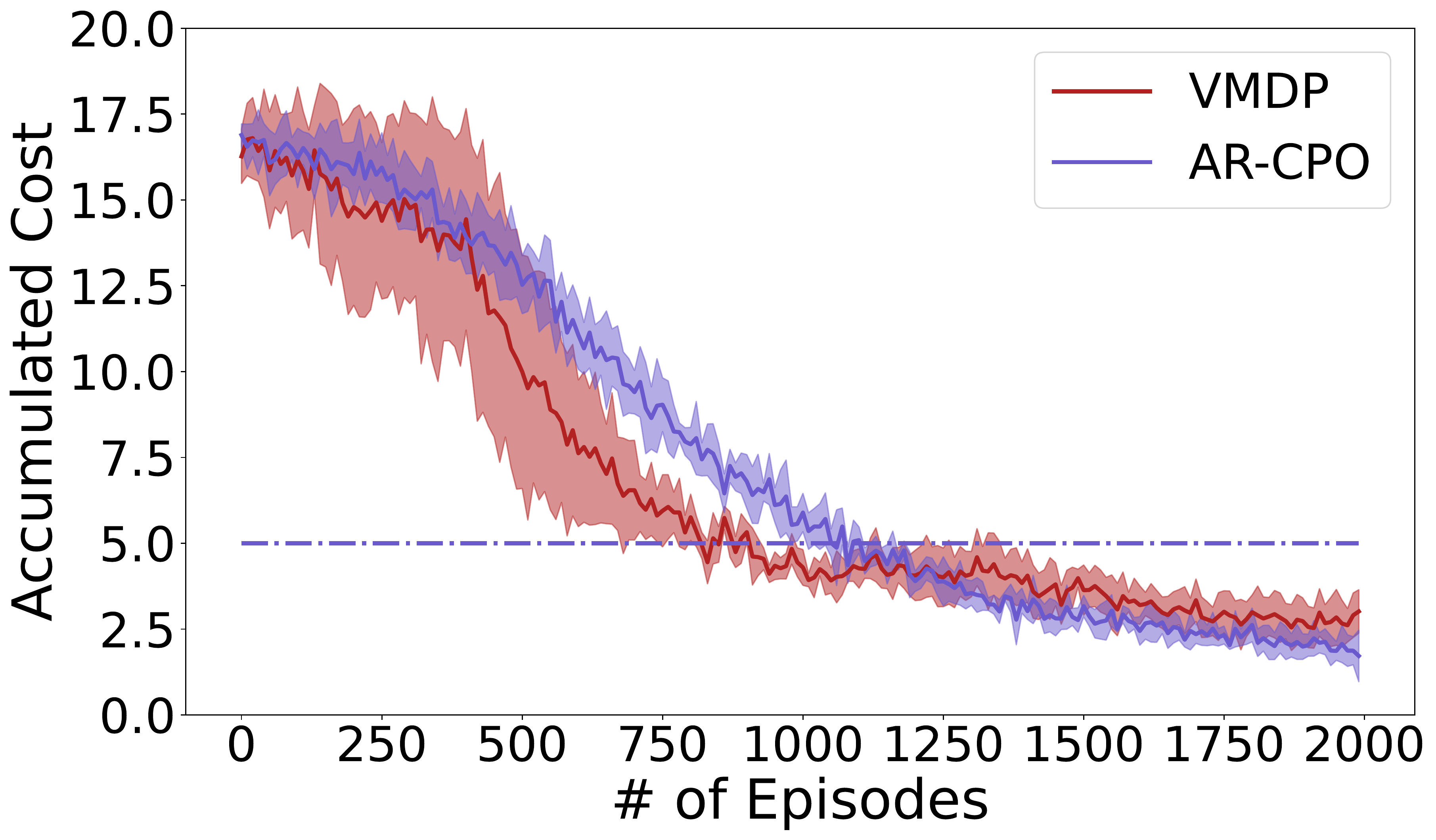}
        \caption{} \label{fig:suppl_2c}
    \end{subfigure}
    \caption{ Average performance of VMDP and AR-CPO in case of discounted reward and costs; the $x$-axis is training iteration.}
    \label{fig:perf_suppl}
\end{figure}

Figure \ref{fig:1a} represents average total reward over episode, while Figures \ref{fig:1b} and \ref{fig:1c} show constraints with dashed line as the constraint thresholds. We used total reward for a fair comparison with existing state-of-the-art approach. Moreover, in some cases total reward is more important in practise. 

We find that our algorithm achieves higher total reward with similar standard deviation. The speed of converge of both algorithms is similar. Thus, our algorithm allows to achieve better performance with the same training time for MDP tasks with the small number of constraints.

%----------------
\paragraph{Discounted reward experiment.}
Previously we considered experiments, where we calculated total reward and costs. We will now briefly describe the results of the experiments with discounted reward and costs. 
Considering similar environment as before, we set thresholds of 5 to the discounted costs. Figure \ref{fig:perf_suppl} provides the comparison between AR-CPO with VMDP under their best tuned parameters provided in Appendix \ref{sec:additional_expparam}.

In our experiment we used large number of policy optimization steps in subroutine equal to 40 for VMDP. This allowed our algorithm to solve NPG subroutine with high accuracy and, as a result, to converge faster. In the same time, increasing the number of steps in subroutine did not make AR-CPO converge faster. It showed the best performance with this parameter equal to 1.

From Figure \ref{fig:suppl_2a}, we observe that VMDP converges faster than AR-CPO, which is consistent with theory. 

Thus, VMDP algorithm is useful in both discounted and total reward cases and shows better performance than AR-CPO.

\section{Conclusion}
In this paper we consider the constrained Markov decision process, where an agent aims to maximize the expected accumulated discounted reward subject to a relatively small number $m$ of constraints on its costs. The best known algorithms achieve $\tilde{\mathcal{O}}\left(1/\epsilon\right)$ iteration complexity to find global optimum, where $\epsilon$ characterizes optimality gap and constraint violation. Each iteration of these algorithms has the same complexity as an iteration of the Policy Gradient (PG) methods. In this paper we improve (for relatively small number $m$) iteration complexity bound and obtain linear convergence $\tilde{\mathcal{O}}\left(m\right)$.

% \begin{algorithm}
% \caption{An algorithm with caption}\label{alg:cap}
% \begin{algorithmic}[1]
%     \Require $n \geq 0$
%     \Ensure $y = x^n$
%     \State $y \gets 1$
%     \State $X \gets x$
%     \State $N \gets n$
%     \While{$N \neq 0$}
%     \If{$N$ is even}
%         \State $X \gets X \times X$
%         \State $N \gets \frac{N}{2}$
%     \ElsIf{$N$ is odd}
%         \State $y \gets y \times X$
%         \State $N \gets N - 1$
%     \EndIf
%     \EndWhile
% \end{algorithmic}
% \end{algorithm}

%%%%%%%%%%%%%%%%%%%%%%%%%%%%%%%%%%%%%%%%%%%%%%%%%%%%%%%%%%%%%%%%%%%%%%%%%%%%%%%%%%%%%%%%%%%%%%
% \iffalse 
\subsubsection*{Acknowledgments}
The work of E. Gladin is funded by the Deutsche Forschungsgemeinschaft (DFG, German Research Foundation) under Germany's Excellence Strategy – The Berlin Mathematics Research Center MATH+ (EXC-2046/1, project ID: 390685689).

The work of A. Gasnikov was supported by a grant for research centers in the field of
artificial intelligence, provided by the Analytical Center for the
Government of the Russian Federation in accordance with the subsidy
agreement (agreement identifier 000000D730321P5Q0002 ) and the agreement
with the Ivannikov Institute for System Programming of the Russian
Academy of Sciences dated November 2, 2021 No. 70-2021-00142.
% Use unnumbered third level headings for the acknowledgments. All acknowledgments
% go at the end of the paper. Do not include acknowledgments in the anonymized
% submission, only in the final paper.

% \fi 
\clearpage

\bibliographystyle{abbrvnat}
\bibliography{references}

\newpage

%%%%%%%%%%%%%%%%%%%%%%%%%%%%%%%%%%%%%%%%%%%%%%%%%%%%%%%%%%%%%%%%%%%%%%%%%%%%%%%%%%%%%%%%%%%%%%

\onecolumn 
\appendix

\part*{Supplementary Materials}

\section{Natural policy gradient (NPG)}\label{NPGs}

NPG is one of the algorithms that can efficently optimize
a finite MDP with relative entropy regularization:
\begin{align}
    \max_{\pi \in \Pi} V^{\pi}_{\tau}(\rho) =
    V^{\pi}(\rho) + \tau\Hc(\pi),
\end{align}
assuming access to gradients of their soft value function
and to a Fisher information matrix
respective to its softmax parametrization. Specifically,
policies are parametrized as follows:
\begin{align}
\label{softmax_parametrization}
    &\pi^{\theta}(a \vert s) = \frac{\exp(e^{\theta_{s,a}})}
    {\sum_{a'} \exp(e^{\theta_{s,a'}})},\\
    &\theta \in \mathbb{R}^{\Sc \times \Ac},
\end{align}
and the NPG algorithm has access to functions:
\begin{align}
    &G(\theta) = \nabla_{\theta} V^{\pi^\theta}_{\tau}(\rho),\\
	&\mathcal{F}_\rho^\pi(\theta) := 
	\mathop\mathbb{E}\limits_{s\sim d_{\rho}^{\pi^{\theta}},
	a\sim \pi\theta(\cdot|s)}
	\left[
	{\big( \nabla_{\theta} \log \pi_\theta(a|s) \big) \big( \nabla_{\theta} \log \pi_\theta(a|s) \big)^\top}
	\right].
\end{align}

This type of oracle is motivated by a possibility of estimating 
this gradient in high-dimension MDP's in applications.

\subsection{Algorithm and convergence rates}

The algorithm looks like this:

\begin{algorithm}[h!]
    \caption{Natural policy gradient (NPG) algorithm}
    \label{NPGalgorithm}
    \begin{algorithmic}[1]
        \REQUIRE learning rate $\eta > 0$, initialization 
        parameters $\theta_0 = 0$.
        
        \FOR{$t=0,1,2,\ldots$}
        
        \STATE $
        \theta^{(t + 1)} ~\gets~
        \theta^{(t)}
        + \eta \big(\mathcal{F}_\rho^{\theta^{(t)}}\big)^\dagger \nabla_{\theta}
        V_{\tau}^{\pi_{\theta^{(t)}}} (\rho)    
        $
        
	    \ENDFOR
    \end{algorithmic}
\end{algorithm}

($M^{\dagger}$ is Moore-Penrose inverse function)

The update rule (20) can be rewritten in terms of policies and soft Q-functions:
\begin{align}\label{eq:entropy_npg}
	\forall (s,a)\in \Sc\times \Ac: \quad 
	\pi^{(t+1)}(a|s) = 
 	\frac{1} {Z^{(t)}(s)} \big( \pi^{(t)}(a|s) \big)^{1- \frac{\eta\tau}{1-\gamma}} 
		\exp\Big( \frac{\eta{Q_{\tau}^{(t)}(s, a)}} { {1-\gamma}} \Big),
\end{align}
where $Z^{(t)}(s)= \sum_{a'\in \Ac}\big( \pi^{(t)}(a'|s) \big)^{1- \frac{\eta\tau}{1-\gamma}} 
\exp\big( \frac{\eta{Q_{\tau}^{(t)}(s, a')}} { {1-\gamma}} \big)$
is a normalizing coefficient, and soft Q-functions are defined as follows:
\begin{align}
    \forall (s,a) \in \Sc\times \Ac:\qquad	Q_{\tau}^{\pi} (s,a) & = r(s,a) + \gamma \mathbb{E}_{s'\sim P(\cdot|s,a)} \big[ V^{\pi}_{\tau}(s') \big], 
\end{align}

So, in our finite setting we can assume we are given an oracle of soft Q-functions instead of the earlier mentioned.

In \cite{MDPrate} the following theorem is proved
(in setting with $r(s,a) \in [0, 1]$):

\begin{theorem}[Linear convergence of exact entropy-regularized NPG]
\label{thm:npg_exact}
For any learning rate $0 < \eta  \leq (1-\gamma)/\tau$, the entropy-regularized NPG updates \eqref{eq:entropy_npg} satisfy 
\begin{align}
&\normbig{Q_{\tau}^{*} - Q_{\tau}^{(t+1)}}_\infty \leq C_1 \gamma(1 - \eta\tau)^{t}, \\
&\normbig{\log \pi_{\tau}^* - \log\pi^{(t+1)}}_\infty \leq 2C_1 \tau^{-1}(1 - \eta \tau)^{t}, \\
&\normbig{V_{\tau}^{*} - V_{\tau}^{(t+1)}}_\infty \leq 3C_1 \gamma(1 - \eta\tau)^{t}.
\end{align}
for all $t\geq 0$, where
\begin{equation}\label{eq:C1}
C_1 := \normbig{Q_{\tau}^* - Q_{\tau}^{(0)}}_\infty + 2\tau  
\left(1 - \frac{\eta \tau}{1 - \gamma}\right)
\normbig{\log \pi_{\tau}^* - \log \pi^{(0)}}_\infty.
\end{equation}
\end{theorem}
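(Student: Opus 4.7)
The plan is to prove the three bounds via a coupled contraction argument: I derive linear recursions for the soft $Q$-function error and the log-policy error, then combine them into a single Lyapunov quantity that contracts by exactly $(1-\eta\tau)$ per iteration. The first step is to rewrite the closed-form NPG update \eqref{eq:entropy_npg} in log-scale. Denoting $\alpha := 1 - \eta\tau/(1-\gamma) \in [0,1]$ (the step-size condition $\eta \leq (1-\gamma)/\tau$ ensures $\alpha \geq 0$), taking logarithms of \eqref{eq:entropy_npg} gives
$$\tau \log \pi^{(t+1)}(a|s) = \tau\alpha \log \pi^{(t)}(a|s) + (1-\alpha) Q_\tau^{(t)}(s,a) - \tau \log Z^{(t)}(s).$$
The soft Bellman optimality conditions for the regularized MDP give $\tau \log \pi_\tau^*(a|s) = Q_\tau^*(s,a) - V_\tau^*(s)$ and $V_\tau^*(s) = \tau \log \sum_{a'} \exp(Q_\tau^*(s,a')/\tau)$. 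Writing $\tau \log \pi_\tau^* = \tau\alpha \log \pi_\tau^* + (1-\alpha)\tau \log \pi_\tau^*$ on the right-hand side, subtracting, and noting that the resulting state-dependent normalization shift is dominated by the span of the other terms (since both log-policies exponentiate to probability distributions), I obtain the first recursion
$$\tau\|\log \pi_\tau^* - \log \pi^{(t+1)}\|_\infty \leq \tau\alpha \|\log \pi_\tau^* - \log \pi^{(t)}\|_\infty + (1-\alpha)\|Q_\tau^* - Q_\tau^{(t)}\|_\infty.$$

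The second step is a Bellman-style bound on the $Q$-error. Since $Q_\tau^*$ and $Q_\tau^{(t+1)}$ both satisfy their respective soft Bellman equations, $Q_\tau^*(s,a) - Q_\tau^{(t+1)}(s,a) = \gamma \mathbb{E}_{s' \sim P(\cdot|s,a)}[V_\tau^*(s') - V_\tau^{(t+1)}(s')]$. Expressing $V_\tau^{(t+1)}(s)$ via the log-sum-exp identity applied to the softmax policy $\pi^{(t+1)}$ (using the closed form from the first step) and comparing with $V_\tau^*(s) = \tau\log \sum_a \exp(Q_\tau^*(s,a)/\tau)$, a direct manipulation of log-sum-exp differences yields
$$\|Q_\tau^* - Q_\tau^{(t+1)}\|_\infty \leq \gamma \bigl(\|Q_\tau^* - Q_\tau^{(t)}\|_\infty + \tau\alpha \|\log \pi_\tau^* - \log \pi^{(t)}\|_\infty\bigr).$$

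The main obstacle, and the heart of the argument, is combining these two recursions into a single strictly contracting quantity. To that end, I would introduce the Lyapunov function
$$\omega^{(t)} := \|Q_\tau^* - Q_\tau^{(t)}\|_\infty + 2\tau\alpha \|\log \pi_\tau^* - \log \pi^{(t)}\|_\infty,$$
whose value at $t=0$ coincides with the constant $C_1$ in \eqref{eq:C1}. Substituting the two recursions above and using both $\alpha \leq 1$ and the step-size assumption $\eta\tau \leq 1-\gamma$ (equivalently $\gamma \leq 1 - \eta\tau$), an elementary algebraic check gives $\omega^{(t+1)} \leq (1-\eta\tau)\,\omega^{(t)}$, hence $\omega^{(t)} \leq (1-\eta\tau)^{t} C_1$ for all $t \geq 0$. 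The three displayed bounds are then extracted from $\omega$: the $Q$-bound follows by plugging $\omega^{(t)} \leq (1-\eta\tau)^t C_1$ into the Bellman inequality, giving $\|Q_\tau^* - Q_\tau^{(t+1)}\|_\infty \leq \gamma\omega^{(t)} \leq C_1\gamma(1-\eta\tau)^t$; the log-policy bound comes from reading off the second coefficient of $\omega^{(t+1)}$, producing the $2C_1\tau^{-1}(1-\eta\tau)^t$ factor; and the $V$-bound follows from the triangle inequality $\|V_\tau^* - V_\tau^{(t+1)}\|_\infty \leq \|Q_\tau^* - Q_\tau^{(t+1)}\|_\infty + \tau\|\log \pi_\tau^* - \log \pi^{(t+1)}\|_\infty$, yielding the $3C_1\gamma$ prefactor after applying the first two estimates.
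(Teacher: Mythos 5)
First, note that the paper does not prove this theorem at all: it is quoted verbatim from \cite{MDPrate} (Cen et al.), so there is no in-paper argument to compare against. Your sketch does follow the general shape of the reference proof (two coupled linear recursions plus a spectral/Lyapunov contraction), but three of its steps do not hold as written. (i) In your first recursion, subtracting the two log-policy identities leaves a state-dependent normalization constant $c_s = \tau\log Z^{(t)}(s) - (1-\alpha)V_\tau^*(s)$, and the honest way to absorb it is through the span seminorm: since $\pi_\tau^*(\cdot\mid s)$ and $\pi^{(t+1)}(\cdot\mid s)$ are both distributions, the difference of their logs has entries of both signs, giving $\norm{D}_\infty \leq \operatorname{span}(E) \leq 2\norm{E}_\infty$ — a factor of $2$, not the factor of $1$ you assert. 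Putting that $2$ inside the recursion destroys the contraction; this is exactly why \cite{MDPrate} runs the recursion on an \emph{unnormalized} auxiliary sequence $\xi^{(t)}$ and only converts to $\log\pi^{(t)}$ (paying the factor $2$ once) at the very end. (ii) Your second recursion is asserted via ``a direct manipulation of log-sum-exp differences,'' but $V_\tau^{(t+1)}$ is the value of the generally suboptimal policy $\pi^{(t+1)}$, not the log-sum-exp of any $Q$-function you have access to; bounding $V_\tau^* - V_\tau^{(t+1)}$ requires the soft policy-improvement (monotonicity) lemma, which is the technical core of the reference proof and is entirely absent from your sketch.

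(iii) Even granting both recursions exactly as you state them, the ``elementary algebraic check'' fails. With $q_t := \normbig{Q_\tau^* - Q_\tau^{(t)}}_\infty$, $p_t := \tau\normbig{\log\pi_\tau^* - \log\pi^{(t)}}_\infty$ and $\omega_t = q_t + 2\alpha p_t$, your two inequalities give
\begin{equation*}
\omega_{t+1} \;\leq\; \bigl(\gamma + 2\alpha(1-\alpha)\bigr)\, q_t \;+\; \alpha\bigl(\gamma + 2\alpha\bigr)\, p_t ,
\end{equation*}
and the coefficient of $q_t$ must not exceed $1-\eta\tau$. Since $1-\eta\tau-\gamma = (1-\gamma)\alpha$ and $2\alpha(1-\alpha) = 2\alpha\eta\tau/(1-\gamma)$, this requires $\eta \leq (1-\gamma)^2/(2\tau)$, which is far more restrictive than the hypothesis $\eta \leq (1-\gamma)/\tau$ (e.g., $\gamma=0.99$, $\eta\tau = 0.1(1-\gamma)$ gives coefficient $1.17 > 1-\eta\tau$). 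The reference proof avoids this because its $Q$-recursion has coefficient $\gamma(1-\alpha)$ on $q_t$, not $\gamma$, so the system matrix is rank one with unique nonzero eigenvalue $\gamma(1-\alpha)+\alpha = 1-\eta\tau$; recovering that structure is precisely what the missing improvement lemma buys. To repair the argument you would need to (a) introduce the unnormalized sequence $\xi^{(t)}$, (b) prove the monotonic soft policy improvement $V_\tau^{(t+1)} \geq V_\tau^{(t)}$ together with the lower bound on $V_\tau^{(t+1)}$ in terms of $\tau\log Z^{(t)}$, and (c) redo the linear-system step with the resulting rank-one matrix.
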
 

We will use the algorithm with $\eta = \frac{1-\gamma}{\tau}$.
In this case we have convergence rates:
\begin{align}
&\normbig{Q_{\tau}^{*} - Q_{\tau}^{(t+1)}}_\infty  \leq C_1 
\gamma^{t+1}, \\
&\normbig{\pi_{\tau}^* - \pi_{\tau}^{(t+1)}}_\infty \leq
\normbig{\log \pi_{\tau}^* - \log\pi^{(t+1)}}_\infty  \leq 2C_1 \tau^{-1}\gamma^{t}, \\
&\normbig{V_{\tau}^{*} - V_{\tau}^{(t+1)}}_\infty \leq 3C_1 \gamma^{t+1},\\
&\text{with  } 
C_1 =  \normbig{Q_{\tau}^* - Q_{\tau}^{(0)}}_\infty.
\end{align}

\subsection{Usage of NPG in our work}

In our algorithm we need to solve auxiliary
problems of the form:
\begin{align}
    \max_{\pi \in \Pi} V^{\pi}_{\tau, \lambda}(\rho)
    := V_0^\pi(\rho) 
    + \sum_{i=1}^m \lambda_i V_i^{\pi}(\rho)
    + \tau\Hc(\pi)
\end{align}
with this procedure.

Since the objective is a $\tau$-regularized value
function for an MDP with rewards $r_0 + \langle \lambda, r \rangle$,
we can use the NPG procedure to optimize it.
However, we cannot pass our MDP with these rewards
directly to this method,
because it assumes $r(s,a) \in [0, 1]$ in \cite{MDPrate}.
So, we will scale both $r$ and $\tau$ to make
rewards satisfy this condition, and run NPG 
with a higher accuracy.

Specifically, we define a procedure $\mathrm{NPG}(r, \tau, \delta)$ as follows.

First, define $R = \max(\max_{s,a} r(s,a), 1)$. Calculate $r'(s,a) = \frac{r(s,a)}{R}$.

Then, apply NPG algorithm to solve an MDP with rewards $r'$ and regularization
coefficient $\frac{\tau}{R}$ with accuracy $\frac{\delta}{R}$ in policy norm.
For that we need a number of iterations $t + 1$ that satisfies: 

\begin{align}
\label{npg_deltaReq}
    &2\frac{C_1}{R}\left(\frac{\tau}{R}\right)^{-1}\gamma^t < \frac{\delta}{R},\\
    &\gamma^t < \frac{\delta\tau}{2C_1R},\\
    &t > \frac{
    \log{2C_1R} + \log{\delta^{-1}} + \log{\tau^{-1}}
    }
    {\log \gamma^{-1}}.
\end{align}

By this we get a $\delta$-optimal policy in terms of $l_{\infty}$ distance 
to the optimal policy, since it is the same after rescaling and $R \geq 1$.
Also, by \ref{npg_deltaReq}:

\begin{align}
    &2C_1\tau^{-1}\gamma^t < \frac{\delta}{R},\\
    &3C_1\gamma^{t+1} < \frac{6\tau\gamma}{R}\delta,\\
    &\normbig{V_{\tau}^{*} - V_{\tau}^{(t+1)}}_\infty \leq 6\tau\gamma\delta.
\end{align}

Finally, we get this statement:

\begin{theorem}\label{npg_convergence}
Suppose $\delta, \tau > 0$, and we have a $\tau$-regularized MDP.
Let $R = \max(\max_{s,a} r(s, a), 1)$.
Then a number of NPG iterations more than:
\begin{align}
    T = \frac{
    \log{2C_1R} + \log{\delta^{-1}} + \log{\tau^{-1}}
    }
    {\log \gamma^{-1}} + 1
\end{align}
is enough for our procedure to acquire
a policy $\tilde{\pi}$ that satisfies:
\begin{align}
    &\normbig{\pi_{\tau}^* - \tilde{\pi}}_\infty < \delta,\\
    &\normbig{V_{\tau}^{*} - V_{\tau}^{\tilde{\pi}}}_\infty \leq 6\tau\gamma\delta.
\end{align}
\end{theorem}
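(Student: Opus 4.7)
The plan is to reduce directly to Theorem \ref{thm:npg_exact} by exploiting the exact equivalence between the original regularized MDP with reward $r$ and coefficient $\tau$, and the rescaled regularized MDP with reward $r' := r/R$ and coefficient $\tau' := \tau/R$. Since value functions, Q-functions and soft value functions all scale linearly with a joint rescaling of reward and regularization, we have $V_{\tau}^{\pi} = R \cdot V_{\tau'}^{\pi, \mathrm{scaled}}$ and $Q_{\tau}^{\pi} = R \cdot Q_{\tau'}^{\pi, \mathrm{scaled}}$ for every policy $\pi$, and crucially the argmax over $\Pi$ is unchanged, so $\pi_{\tau}^{*}$ coincides with the optimal policy of the rescaled problem. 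Therefore the policy-space errors $\norm{\pi_\tau^* - \tilde{\pi}}_\infty$ are identical in the two views, and it suffices to bound the NPG iterates of the rescaled MDP.

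Because $r' \in [0,1]$ by construction, Theorem \ref{thm:npg_exact} applies to the rescaled problem. With the chosen learning rate $\eta = (1-\gamma)/\tau'$ one gets $(1-\eta\tau')^t = \gamma^t$, so the log-policy bound from the theorem reads
\begin{equation*}
\normbig{\log \pi_{\tau'}^{*} - \log \pi^{(t+1)}}_\infty \leq 2 C_1 (\tau')^{-1} \gamma^{t} = 2 C_1 R \tau^{-1} \gamma^{t},
\end{equation*}
where $C_1$ is evaluated on the rescaled MDP. Next, for any two probability distributions $p, q$ with $\log p, \log q \le 0$, the mean-value theorem gives $|p(a) - q(a)| = e^{\xi} |\log p(a) - \log q(a)| \le |\log p(a) - \log q(a)|$, whence $\norm{\pi_{\tau'}^{*} - \pi^{(t+1)}}_\infty \le \norm{\log \pi_{\tau'}^{*} - \log \pi^{(t+1)}}_\infty$. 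Combining these two facts and identifying $\pi_{\tau'}^* = \pi_\tau^*$ yields the key inequality $\norm{\pi_\tau^* - \pi^{(t+1)}}_\infty \le 2 C_1 R \tau^{-1} \gamma^t$.

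To enforce the target accuracy $\norm{\pi_\tau^* - \tilde{\pi}}_\infty < \delta$ it then suffices to demand $2 C_1 R \tau^{-1} \gamma^t < \delta$. Taking logarithms and isolating $t$ produces exactly the stated complexity
\begin{equation*}
t > \frac{\log(2 C_1 R) + \log \delta^{-1} + \log \tau^{-1}}{\log \gamma^{-1}},
\end{equation*}
so $T$ as given in the theorem is large enough. For the value bound, Theorem \ref{thm:npg_exact} in the rescaled MDP gives $\norm{V_{\tau'}^{*} - V_{\tau'}^{(t+1)}}_\infty \le 3 C_1 \gamma^{t+1}$. Multiplying by $R$ lifts this back to the original scaling, and then plugging the derived inequality $C_1 \gamma^t \le \delta \tau / (2R)$ bounds the right-hand side by $3\gamma \tau \delta / 2 \le 6 \tau \gamma \delta$, which is the second claim.

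The only subtlety worth flagging is the consistent bookkeeping of the rescaling: one must check that $C_1$ defined in Theorem \ref{thm:npg_exact} scales like $1/R$ between the two MDPs (since $C_1$ is an $\ell_\infty$ norm of soft Q-functions and a log-policy discrepancy, both of which behave correctly under the joint rescaling), which is precisely what makes the factors $R$ cancel cleanly above. Apart from that, the argument is a direct invocation of Theorem \ref{thm:npg_exact} together with the elementary $\ell_\infty$ inequality between probability distributions and their logarithms; there is no real obstacle beyond this constant-tracking.
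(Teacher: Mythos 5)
Your proposal is correct and follows essentially the same route as the paper: rescale the rewards and regularization by $R$, invoke Theorem \ref{thm:npg_exact} with $\eta=(1-\gamma)/\tau'$ so the contraction factor is $\gamma$, pass from the log-policy bound to the policy bound, solve for $t$, and recycle the same inequality to get the $6\tau\gamma\delta$ value bound. The only (harmless) differences are bookkeeping: the paper demands accuracy $\delta/R$ in the rescaled view while you target $\delta$ directly and let the $1/R$ scaling of $C_1$ absorb the factor, and you additionally spell out the elementary inequality $\norm{\pi-\pi'}_\infty\le\norm{\log\pi-\log\pi'}_\infty$ that the paper uses without comment.
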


\section{Description of Vaidya's cutting-plane method}\label{vaidya_descr}
Vaidya proposed a cutting-plane method from \cite{vaidya1989new, vaidya1996new} for solving problems of the form
\begin{equation}\label{problem_vaidya}
    \min_{\lambda \in \Lambda} d(\lambda),
\end{equation}
where $\Lambda$ is a compact convex set with non-empty interior, $d(\lambda)$ is a continuous convex function. We will now introduce the notation and describe the algorithm. Let $P(A,b)$ denote the bounded full-dimensional polytope of the form
$$
P(A,b) = \{\lambda \in \mathbb{R}^m: \, A\lambda\geq b\}\, \text{ where } A \in \mathbb{R}^{k\times m} \text{ and } b \in \mathbb{R}^k.
$$
The logarithmic barrier for $P$ is defined as
$$
L(\lambda; A,b) := -\sum_{i=1}^{k} \log \left(a_{i}^{\top} \lambda-b_{i}\right),
$$
where $a_{i}^{\top}$ is the $i^{th}$ row of $A,\ i=1,\ldots,k$. The Hessian of $L(\lambda)$ is given by
\begin{equation}\label{hess_app}
    H(\lambda; A,b) =\sum_{i=1}^{k} \frac{a_{i} a_{i}^{\top}}{\left(a_{i}^{\top} \lambda-b_{i}\right)^{2}},
\end{equation}
and is positive definite for all $\lambda$ in $\operatorname{int} P$ (interior of $P$). The \textit{volumetric barrier} for $P(A,b)$ is defined as
$$
\eg{\mathcal{V}}(\lambda; A,b) = \frac{1}{2} \log \left(\operatorname{det} H(\lambda; A,b)\right),
$$
where $\operatorname{det}H(\lambda; A,b)$ denotes the determinant of $H(\lambda; A,b)$.
Let also $\sigma_{i}(\lambda; A,b)$ denote the values
\begin{equation}\label{vaidya_sigmas_app}
    \sigma_{i}(\lambda; A,b)=\frac{a_{i}^{\top} \left(H(
\lambda; A,b)\right)^{-1} a_{i}}{\left(a_{i}^{\top} \lambda-b_{i}\right)^{2}}, \quad 1 \leq i \leq k.
\end{equation}
\textit{Volumetric center} of $P$ is defined as the point that minimizes $\eg{\mathcal{V}}(\lambda; A,b)$ over the interior of $P$:
\begin{equation}\label{vol_center2}
    \operatorname{VolCenter}(A, b) := \argmin_{\lambda \in \operatorname{int} P(A, b)} \eg{\mathcal{V}}(\lambda; A,b).
\end{equation}
Volumetric barrier $\eg{\mathcal{V}}$ is a self-concordant function and can therefore be efficiently minimized with the Newton-type methods. For more details and theoretical analysis, refer to \cite{vaidya1996new,vaidya1989new}.
% In fact, one can use
Consider the following version of inexact subgradient. % in Vaidya's method.
% More precisely, consider the following definition.
\begin{definition}\label{def_delta_subgrad}
Vector $\nu \in \mathbb{R}^m$ is called a $\delta$-subgradient of a convex function $f$ at $z \in \operatorname{dom}d$ (denoted $\nu \in \partial_{\delta}d(z)$), if $$d(\lambda) \geq d(z) + \nu^{\top}(\lambda - z) -\delta \quad \forall \lambda \in \operatorname{dom}d.$$
If $\delta=0$, this we get the usual definition of subgradient $\partial_ {\delta} d(z) = \partial d(z)$.
\end{definition}
It has been proved that one can use $\delta$-subgradient instead of the exact subgradient in Vaidya's method \cite{Gladin2021SolvingSM}. Algorithm \ref{alg:og_vaidya} gives the version of the method using $\delta$-subgradients. %\ref{alg:vaidya}
The method produces a sequence of pairs $\left(A_t, b_t\right) \in \mathbb{R}^{k_t\times m}\times \mathbb{R}^{k_t}$, such that the corresponding polytopes contain a solution of the problem \eqref{problem_vaidya}. A simplex containing the set $\Lambda$ is often taken as the initial polytope $\left(A_0, b_0\right)$. \eg{For example, if $\|\lambda\|_2\leq \mathcal{R}$ for any $\lambda \in \Lambda$, then a possible choice of a starting polytope is
$$ P_0=\Bigl\{\lambda \in \mathbb{R}^m: \lambda_{j} \geqslant-\mathcal{R}, j=\overline{1,m},\ \sum_{j=1}^{m} \lambda_{j} \leqslant m \mathcal{R} \Bigr\} \supseteq \mathcal{B}_{\mathcal{R}} \supseteq \Lambda,$$
that is,
$$
    k_0 = m+1,\quad b_0 = - \mathcal{R} \left[\begin{array}{c}
        \mathbf{1}_m \\
        m
    \end{array}\right],\quad A_0 = \left[\begin{array}{c}
        I_m \\ % e_1^{\top} \\
        % \vdots \\
        % e_n^{\top} \\
        -\mathbf{1}_m^{\top}
    \end{array}\right].
$$
}
    
\begin{algorithm}[h!]%[htp]
	\caption{Vaidya's cutting-plane method with $\delta$-subgradient}
	\label{alg:og_vaidya}
	\begin{algorithmic}[1]
		\REQUIRE number of outer iterations $T$,
		pair $(A_0, b_0) \in \mathbb{R}^{k_0\times m} \times \mathbb{R}^{k_0}$,
		algorithm parameters $\eta \leq 10^{-4}$,
		$\zeta \leq 10^{-3} \cdot \eta$.
		\FOR{$t=0,\, \dots, \, T-1$}
		    \STATE $\lambda_t:=\operatorname{VolCenter}(A, b)$
		    \STATE Compute
    		    $H_t^{-1} := \left( H(\lambda_t; A_t,b_t) \right)^{-1}$ and
    		    $\displaystyle \left\{ \sigma_{i}(\lambda_t; A_t,b_t) \right\}_{i=1}^{k_t}$ %see
    		  %  \eqref{hess} and
    		  %  \eqref{vaidya_sigmas},
		    \STATE $\displaystyle i_t := \argmin_{1 \leq i \leq k_t} \sigma_{i}(\lambda_t; A_t,b_t)$
		    \IF {$\sigma_{i_t}(\lambda_t; A_t,b_t) < \zeta$}
		        \STATE Obtain $\left(A_{t+1}, b_{t+1}\right)$ by removing the $i_t$-th row from $\left(A_t, b_t\right)$,
		        \STATE $k_{t+1} := k_t - 1.$
		    \ELSE
		        \IF {$\lambda_t \in \mathbb{R}_+^{\eg{m}}$}
    		       \STATE Take $\widehat{\nabla}_{t} \in -\partial_{\delta} d(\lambda_t)$, %\label{line_grad}
		        \ELSE
    		        \STATE Take $\widehat{\nabla}_{t}$ such that $\widehat{\nabla}_{t}^{\top} \lambda \geq \widehat{\nabla}_{t}^{\top} \lambda_t\ \forall \lambda \in \Lambda$.\label{line_separ_oracle_vaidya}
		        \ENDIF
		        \STATE Find such $\beta_t \in \mathbb{R}$ that $\widehat{\nabla}_{t}^\top \lambda_t \geq \beta_t$ from the equation
		        $$\frac{\widehat{\nabla}_{t}^\top H_t^{-1} \widehat{\nabla}_{t}}{(\widehat{\nabla}_{t}^\top \lambda_t - \beta_t)^2} = \frac{1}{2} \sqrt{\eta \zeta},$$
		        \STATE $A_{t+1} := \begin{pmatrix}A_t\\ \widehat{\nabla}_{t}^{\top}\end{pmatrix},\;\;b_{t+1} := \begin{pmatrix}b_t\\\beta_t\end{pmatrix},\;\;k_{t+1} = k_t + 1$.
		    \ENDIF
		\ENDFOR
		\STATE $\lambda_T = \argmin\limits_{\lambda \in \{\lambda_0, ..., \lambda_{T-1}\}} d_{\tau}(\lambda)$
		%% \STATE $\pi_T := \mathrm{NPG}\left(r_0 + \langle \lambda_T, r \rangle, \tau, \delta \right)$
		\ENSURE $\lambda_T$.
	\end{algorithmic}
\end{algorithm}

\begin{theorem}\label{thm:vaidya}
Let $\mathcal{B}_{\mathcal{R}_{in}}$ and $\mathcal{B}_{\mathcal{R}}$ be some Euclidean balls of radii $\mathcal{R}_{in}$ and $\mathcal{R}$, respectively, such that $\mathcal{B}_{\mathcal{R}_{in}} \subseteq \Lambda \subseteq \mathcal{B}_{\mathcal{R}}$, and let a number $B>0$ be such that $|d(\lambda) - d(\lambda')| \leq B\ \forall \lambda, \lambda' \in \Lambda$. After $T \geq \frac{2m}{\zeta} \log \left( \frac{m^{1.5} \mathcal{R}}{\gamma \mathcal{R}_{in}} \right) + \frac{1}{\gamma} \log \pmb{\pi}$ iterations Vaidya's method with $\delta$-subgradient for the problem \eqref{problem_vaidya} returns a point $\lambda_T$ such that
\begin{equation}
    d(\lambda_T) - \min_{\lambda \in \Lambda} d(\lambda) \leq \frac{B m^{1.5} \mathcal{R}}{\zeta \mathcal{R}_{in}} \exp \left( \frac{\log \pmb{\pi} - \zeta T}{2m} \right) + \delta,
\end{equation}
where $\zeta>0$ is the parameter of the algorithm.
\end{theorem}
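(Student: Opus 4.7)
The plan is to adapt the classical potential-function analysis of Vaidya's method \cite{vaidya1989new,vaidya1996new} and to show that replacing the exact subgradient by a $\delta$-subgradient only introduces an additive $\delta$ term in the final bound, in the spirit of \cite{Gladin2021SolvingSM}. The central potential is the volumetric barrier value $\mathcal{V}_t := \mathcal{V}(\lambda_t; A_t, b_t) = \tfrac12\log\det H(\lambda_t;A_t,b_t)$ evaluated at the volumetric center. The proof splits into three blocks: (i) a one-step analysis of how $\mathcal{V}_t$ changes on drop and cut steps, (ii) translating the monotone growth of $\mathcal{V}_t$ into geometric shrinkage of the localization polytopes $P(A_t,b_t)$, and (iii) connecting the shrinking polytope to a function-value bound via the $\delta$-subgradient inequality of Definition~\ref{def_delta_subgrad}.

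For block (i), I would recall Vaidya's two key one-step estimates. On a drop step (when $\min_i\sigma_i<\zeta$) the potential decreases by at most a quantity of order $-\log(1-\sqrt{\zeta})$, while on a cut step the particular choice of $\beta_t$ dictated by the equation $\widehat{\nabla}_t^\top H_t^{-1}\widehat{\nabla}_t/(\widehat{\nabla}_t^\top\lambda_t-\beta_t)^2 = \tfrac12\sqrt{\eta\zeta}$ forces the potential to grow by at least a constant multiple of $\sqrt{\eta\zeta}$. With the parameter restrictions $\eta\leq 10^{-4}$ and $\zeta\leq 10^{-3}\eta$, the cut-step gain dominates the drop-step loss; combined with the bookkeeping inequality $T_d\leq T_c+k_0$ (rows cannot be dropped without first being added) one obtains a net linear growth $\mathcal{V}_T-\mathcal{V}_0 \geq \tfrac{\zeta}{2}T-\mathcal{O}(\zeta m)$. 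For block (ii), the standard inequality $\operatorname{vol}(P(A,b))\leq (\pmb{\pi} m)^{m/2}(\det H(\lambda;A,b))^{-1/2}$ at the volumetric center, combined with the lower bound $\mathcal{V}_0 \leq -m\log\mathcal{R}_{in}+\mathcal{O}(1)$ coming from $P(A_0,b_0)\supseteq\mathcal{B}_{\mathcal{R}_{in}}$, translates the growth of $\mathcal{V}_t$ into shrinkage of the polytope. Quantitatively, $P(A_T,b_T)$ cannot contain an Euclidean ball of radius larger than
\[ r_T := \mathcal{R}\,\exp\!\Bigl(\tfrac{\log\pmb{\pi}-\zeta T}{2m}\Bigr), \]
and the iteration-count threshold stated in the theorem is precisely what is needed to make $r_T$ smaller than the radius that will appear in block (iii).

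For block (iii), set $\lambda^\star\in\argmin_{\lambda\in\Lambda}d(\lambda)$ and $\epsilon_T := \frac{Bm^{1.5}\mathcal{R}}{\zeta\mathcal{R}_{in}}\exp\!\bigl(\tfrac{\log\pmb{\pi}-\zeta T}{2m}\bigr)$, and consider the sublevel set $L := \{\lambda\in\Lambda: d(\lambda)\leq d(\lambda^\star)+\epsilon_T\}$. A standard homothety argument for convex functions bounded by $B$ on $\Lambda\supseteq \mathcal{B}_{\mathcal{R}_{in}}$ shows that $L$ contains an Euclidean ball of radius at least $\epsilon_T\mathcal{R}_{in}/B$. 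The $\delta$-subgradient inequality $d(\lambda)\geq d(\lambda_t)+\widehat{\nabla}_t^\top(\lambda-\lambda_t)-\delta$ implies that, at any cut step with $d(\lambda_t) > d(\lambda^\star)+\epsilon_T+\delta$, the corresponding kept halfspace still contains the entire set $L$; the separating cuts of line~\ref{line_separ_oracle_vaidya} (used when $\lambda_t\notin\mathbb{R}_+^m$) likewise never remove any point of $\Lambda\supseteq L$ by construction. If \emph{every} cut step satisfied this strict inequality, we would have $L\subseteq P(A_T,b_T)$, contradicting the volume bound of block (ii) as soon as $\epsilon_T\mathcal{R}_{in}/B > r_T$. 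Substituting the formula for $\epsilon_T$ produces exactly this strict inequality, so some iterate $\lambda_{t^\star}$ must satisfy $d(\lambda_{t^\star}) - d(\lambda^\star) \leq \epsilon_T + \delta$, and since $\lambda_T = \argmin_{t} d(\lambda_t)$ the same bound carries over to $\lambda_T$.

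The main obstacle is carrying the $\delta$-error through the potential-function bookkeeping without letting it accumulate across iterations. The key observation that makes this work is that the $\delta$-subgradient enters the argument only through the separating-hyperplane inequality in block (iii); the volumetric estimates in block (i) are purely geometric and depend on $\widehat{\nabla}_t$ only through the direction of the new hyperplane, not through the function values it encodes. Because of this clean separation the $\delta$-term appears as a single additive offset rather than being multiplied by $T$, yielding the stated bound $d(\lambda_T)-d(\lambda^\star) \leq \epsilon_T + \delta$.
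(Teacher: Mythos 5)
The paper does not actually prove Theorem~\ref{thm:vaidya}: it is imported verbatim from \cite{Gladin2021SolvingSM} (building on \cite{vaidya1989new,vaidya1996new}), so there is no in-paper proof to compare against. Your reconstruction follows exactly the argument of that cited source: Vaidya's potential/volume analysis is untouched, and the $\delta$-subgradient enters only through the separation inequality of Definition~\ref{def_delta_subgrad}, which is precisely why the error appears as a single additive $+\delta$ rather than accumulating over $T$ iterations --- you have correctly identified and justified the one non-boilerplate point. The only caveat is that blocks (i) and (ii) are invoked as black boxes; the explicit constants $m^{1.5}/\zeta$ and $\pmb{\pi}$ in the bound (and the $\tfrac{\zeta}{2}$ per-iteration potential growth) come exactly from those one-step volumetric estimates and the volume--determinant relation at the volumetric center, so a self-contained proof would still need to import or re-derive those quantitative lemmas from \cite{vaidya1996new}.
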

\begin{corollary}
Vaidya's cutting-plane method with $\delta$-subgradient achieves accuracy $\epsilon$ after
\begin{equation}
    T = \left\lceil \frac{2m}{\zeta}
    \log\left((\epsilon - \delta)^{-1}
     \frac{Bm^{1.5}\mathcal{R}}{\zeta \mathcal{R}_{in}}\right) + \frac{\log \pmb{\pi}}{\zeta} \right\rceil,
\end{equation}
provided that $\epsilon > \delta$ and $\epsilon - \delta \leq B$.
\end{corollary}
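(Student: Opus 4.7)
The plan is to obtain the corollary as a direct algebraic consequence of Theorem~\ref{thm:vaidya}. Since the theorem guarantees, after a sufficiently large $T$, a bound of the shape $\frac{Bm^{1.5}\mathcal{R}}{\zeta\mathcal{R}_{in}} \exp\!\big( \frac{\log\pmb{\pi}-\zeta T}{2m} \big) + \delta$ on the suboptimality, the corollary amounts to inverting this expression: find the smallest integer $T$ that makes the first term at most $\epsilon - \delta$, so that the total error is at most $\epsilon$.

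Concretely, first I would impose
\begin{equation*}
\frac{Bm^{1.5}\mathcal{R}}{\zeta\mathcal{R}_{in}} \exp\!\left( \frac{\log\pmb{\pi}-\zeta T}{2m} \right) \leq \epsilon - \delta.
\end{equation*}
The hypothesis $\epsilon > \delta$ is needed precisely to make this inequality non-vacuous and to allow taking logarithms on both sides. Rearranging (multiplying, moving the exponential to the other side, and taking natural logarithm) would give
\begin{equation*}
\frac{\zeta T - \log\pmb{\pi}}{2m} \;\geq\; \log\!\left( \frac{B m^{1.5}\mathcal{R}}{(\epsilon-\delta)\zeta\mathcal{R}_{in}} \right),
\end{equation*}
which, once solved for $T$, produces exactly the stated $T = \big\lceil \frac{2m}{\zeta}\log\big((\epsilon-\delta)^{-1}\frac{Bm^{1.5}\mathcal{R}}{\zeta\mathcal{R}_{in}}\big) + \frac{\log\pmb{\pi}}{\zeta}\big\rceil$. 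Taking the ceiling ensures integrality of the iteration count and only strengthens the inequality.

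The remaining bookkeeping step is to check that this $T$ actually satisfies the lower bound required in Theorem~\ref{thm:vaidya} so that the theorem applies. This reduces to comparing $\log\!\big(\frac{Bm^{1.5}\mathcal{R}}{(\epsilon-\delta)\zeta\mathcal{R}_{in}}\big)$ with $\log\!\big(\frac{m^{1.5}\mathcal{R}}{\zeta\mathcal{R}_{in}}\big)$; their difference is $\log\!\big(\frac{B}{\epsilon-\delta}\big)$, which is non-negative by the assumption $\epsilon - \delta \leq B$. Hence the chosen $T$ is automatically at least as large as the theorem's threshold, and the suboptimality bound from Theorem~\ref{thm:vaidya} applies and evaluates to at most $\epsilon$.

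There is no real obstacle here — the result is just the closed-form inversion of an exponentially decaying upper bound. The only subtle points are the role of the two scalar hypotheses $\epsilon > \delta$ (needed so that one can legitimately demand the residual error term strictly beats $\delta$) and $\epsilon - \delta \leq B$ (needed so the admissibility condition on $T$ inherited from Theorem~\ref{thm:vaidya} is automatically met), both of which I would make explicit in the write-up.
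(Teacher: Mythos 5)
Your proof is correct and is exactly the intended derivation: the paper states this corollary without proof, and the only sensible argument is the one you give — invert the exponential bound from Theorem~\ref{thm:vaidya} so the first term is at most $\epsilon-\delta$, solve for $T$, and use $\epsilon-\delta\leq B$ to verify the resulting $T$ clears the theorem's iteration threshold. Both hypotheses are accounted for, and the algebra checks out.
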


\section{Supporting lemmas}
In this section we prove several lemmas and propositions used in the proof of Theorem \ref{convergence_theorem}. From now on, we use notation introduced in Sections \ref{sec_prelim}, \ref{sec_converg}. In particular, we are considering the dual problem
% In this section we prove several lemmas corresponding to a CMDP instance
% (described in \ref{CMDP_setting}),
% and its dual function for regularized MDP $d_\tau$, which serve as a basis for the proof of Theorem \ref{convergence_theorem}.
% % are crucial to the analisys.
% Recall the formulation of CMDP problem \ref{optprob}:
% \begin{equation}
% \label{constrMDP}
% \begin{aligned}
%     \max _{\pi \in \Pi}\, & V_{0}^{\pi}(\rho) \\
%     \text { s.t. } & V^{\pi}(\rho) \geq c,
% \end{aligned}
% \end{equation}
% where $V^{\pi}(\rho):=\left[V_{1}^{\pi}(\rho), \ldots, V_{m}^{\pi}(\rho)\right]^{\top}$ $c:=\left[c_{1}, \ldots, c_{m}\right]^{\top}$. 
% Lagrangian:
% \begin{equation}
%     \L(\pi, \lambda) :=V_{0}^{\pi}(\rho)+\bigl\langle\lambda, V^{\pi}(\rho)-c\bigr\rangle,
% \end{equation}
% and its entropy-regularized counterpart (we fix some $\tau > 0$ for this section):
% \begin{equation}
% \label{ltau}
%     \L_{\tau}(\pi, \lambda):=\L(\pi, \lambda)+\tau \mathcal{H}(\pi),
% \end{equation}
% where
% $$
%     \mathcal{H}(\pi):=-\mathbb{E}\Bigl[\sum_{t=0}^{\infty} \gamma^{t} \log \left(\pi\left(a_{t} \mid s_{t}\right)\right) \Bigm\vert s_{0}=s, a_{t} \sim \pi\left(a_{t} \mid s_{t}\right), s_{t+1} \sim \mathrm{P}\left(\cdot \mid s_{t}, a_{t}\right)\Bigr]
% $$
% is the discounted entropy of the policy $\pi$, and $\tau > 0$ is a regularization constant.
% And the dual problem:
\begin{equation}
\label{dual}
    \min_{\lambda \in \R_{+}^{m}} \bigl\{ d_{\tau}(\lambda):= \max_{\pi \in \Pi} \L_{\tau}(\pi, \lambda) \bigr\}.
\end{equation}

The first lemma establishes upper bound on the norm of a minimizer of the dual function.

\begin{lemma}[see also \cite{lanarcpo}]\label{lem_bounded_dual}
Suppose Assumption \ref{assumption_slater} holds. Let $\lambda_{\tau}^*$ be a solution of the dual problem \eqref{dual}. Then
$$
\left\| \lambda_{\tau}^* \right\|_{1} \leq B_{\lambda}:=\frac{r_{0, \max } + \log |\Ac|}{(1-\gamma) \xi}.
$$
\end{lemma}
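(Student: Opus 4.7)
The plan is to sandwich $d_\tau(\lambda_\tau^*)$ between an upper bound obtained by evaluating $d_\tau$ at $\lambda = 0$ and a lower bound obtained by evaluating the inner Lagrangian $\mathcal{L}_\tau(\pi_\xi,\cdot)$ at the Slater policy $\pi_\xi$. Since $\lambda_\tau^*$ minimizes $d_\tau$ over $\mathbb{R}^m_+$, the trivial upper bound
\[
d_\tau(\lambda_\tau^*) \;\leq\; d_\tau(0) \;=\; \max_{\pi\in\Pi}\bigl\{V_0^\pi(\rho)+\tau\mathcal{H}(\pi)\bigr\}
\]
holds, and I will bound the right-hand side using the standard estimates $V_0^\pi(\rho)\le r_{0,\max}/(1-\gamma)$ together with $\mathcal{H}(\pi)\le \log|\mathcal{A}|/(1-\gamma)$ (the per-step entropy is at most $\log|\mathcal{A}|$ and the discounting contributes the geometric factor). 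This yields $d_\tau(\lambda_\tau^*)\le (r_{0,\max}+\tau\log|\mathcal{A}|)/(1-\gamma)$.

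Next I would lower bound $d_\tau(\lambda_\tau^*)$ by the value of the Lagrangian at the Slater policy:
\[
d_\tau(\lambda_\tau^*) \;\geq\; \mathcal{L}_\tau(\pi_\xi,\lambda_\tau^*) \;=\; V_0^{\pi_\xi}(\rho) + \langle \lambda_\tau^*, V^{\pi_\xi}(\rho)-c\rangle + \tau\mathcal{H}(\pi_\xi).
\]
Since $r_i\ge 0$ gives $V_0^{\pi_\xi}(\rho)\ge 0$, entropy is non-negative, and Assumption~\ref{assumption_slater} together with componentwise non-negativity of $\lambda_\tau^*$ implies $\langle \lambda_\tau^*, V^{\pi_\xi}(\rho)-c\rangle \geq \xi\,\|\lambda_\tau^*\|_1$, the three terms combine into $d_\tau(\lambda_\tau^*)\ge \xi\|\lambda_\tau^*\|_1$.

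Chaining the two inequalities gives $\xi\|\lambda_\tau^*\|_1 \le (r_{0,\max}+\tau\log|\mathcal{A}|)/(1-\gamma)$, and invoking the choice $\tau\le 1$ made in Theorem~\ref{convergence_theorem} (where $\tau=\min\{1,\sqrt[3]{\epsilon}\}$) turns this into exactly the claimed bound $\|\lambda_\tau^*\|_1\le B_\lambda$. There is no real obstacle here beyond book-keeping; the only subtle point worth stating explicitly is the mild restriction $\tau\le 1$, which absorbs the $\tau$ multiplying $\log|\mathcal{A}|$ in the upper bound.
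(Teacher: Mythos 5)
Your proof is correct, and the lower bound (evaluating $\mathcal{L}_\tau(\pi_\xi,\lambda_\tau^*)$ at the Slater policy to get $d_\tau(\lambda_\tau^*)\ge \xi\|\lambda_\tau^*\|_1$) is exactly the paper's argument. Where you differ is the upper bound: you use the elementary inequality $d_\tau(\lambda_\tau^*)\le d_\tau(0)=\max_{\pi\in\Pi}\{V_0^\pi(\rho)+\tau\mathcal{H}(\pi)\}\le (r_{0,\max}+\tau\log|\mathcal{A}|)/(1-\gamma)$, whereas the paper evaluates the regularized dual at the \emph{unregularized} dual optimum $\lambda^*$ and chains $d_\tau(\lambda_\tau^*)\le d_\tau(\lambda^*)\le d(\lambda^*)+\tau\log|\mathcal{A}|/(1-\gamma)=V_0^{\pi^*}(\rho)+\tau\log|\mathcal{A}|/(1-\gamma)$, which invokes strong duality of the unregularized CMDP ($d(\lambda^*)=V_0^{\pi^*}(\rho)$), a nontrivial fact the paper cites separately. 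Your route is more self-contained (no duality-gap result needed) and yields the same numerical bound $(r_{0,\max}+\tau\log|\mathcal{A}|)/((1-\gamma)\xi)$; the paper's route has the minor advantage of making explicit the comparison with the unregularized dual solution, but buys nothing extra for this lemma. You are also right to flag $\tau\le 1$ explicitly as the step that upgrades $\tau\log|\mathcal{A}|$ to $\log|\mathcal{A}|$ in $B_\lambda$ --- the paper's proof stops at the $\tau$-dependent bound and leaves that absorption implicit, relying on the choice $\tau=\min\{1,\sqrt[3]{\epsilon}\}$ in Theorem~\ref{convergence_theorem}.
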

\begin{proof}
Note that $\mathcal{H}(\pi) \leq \frac{\log |\Ac|}{1-\gamma}$,
\begin{align*}
    &d_{\tau}(\lambda_{\tau}^*) \geq V_0^{\pi_{\xi}}(\rho)
    + \langle \lambda_{\tau}^*, V^{\pi_{\xi}}(\rho) - c \rangle
    + \tau \mathcal{H}(\pi) \geq
    \xi \|\lambda_{\tau}^*\|_1,\\
    % &d_{\tau}(\lambda_{\tau}^*) \leq d_{\tau}(\lambda^*) =
    % d(\lambda^*) + \tau \log |\Ac| =
    % V^{\pi^*}_0(\rho) + \frac{\tau \log |\Ac|}{1-\gamma} \leq \frac{r_{0,max} + \tau \log |\Ac|}{1-\gamma} \\
    &d_{\tau}(\lambda_{\tau}^*) \leq d_{\tau}(\lambda^*) \leq
    d(\lambda^*) + \frac{\tau \log |\Ac|}{1-\gamma} =
    V^{\pi^*}_0(\rho) + \frac{\tau \log |\Ac|}{1-\gamma} \leq \frac{r_{0,max} + \tau \log |\Ac|}{1-\gamma}, \\
    &\|\lambda_{\tau}^*\|_1 \leq 
    \frac{r_{0, \max } + \tau \log |\Ac|}{(1-\gamma) \xi}.
\end{align*}
\end{proof}

Recall that $\Lambda$ is defined as the set
\begin{equation}
    \Lambda := \{ \lambda \in \R^m_+\; |\; \| \lambda \|_1 \leq B_{\lambda} \}.
\end{equation}
The second lemma gives an example of two Euclidean ball, one of which is contained in $\Lambda$ and the other one contains $\Lambda$.

\begin{lemma}\label{ballsy_lemma}
Let $\mathcal{R} := B_{\lambda},\, \mathcal{R}_{in} := \frac{B_{\lambda}}{m + \sqrt{m}}$, then
$\mathcal{B}_{\mathcal{R}_{in}} \subseteq \Lambda \subseteq \mathcal{B}_{\mathcal{R}}$,
with $\mathcal{B}_{\mathcal{R}_{in}}$ being the Euclidean ball of radius $\mathcal{R}_{in}$ centered at the point $\lambda_{in}:= \mathcal{R}_{in} \cdot \mathbf{1}_m$, $\mathcal{B}_{\mathcal{R}}$ being the Euclidean ball of radius $\mathcal{R}$ centered at the origin.
\end{lemma}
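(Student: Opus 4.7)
The plan is to verify the two inclusions separately, both by direct elementary estimates relating $\ell_1$ and $\ell_2$ norms on $\mathbb{R}^m$.

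For the outer containment $\Lambda \subseteq \mathcal{B}_{\mathcal{R}}$, I would take any $\lambda \in \Lambda$ and use the fact that the $\ell_2$ norm is dominated by the $\ell_1$ norm on $\mathbb{R}^m$. Since $\|\lambda\|_2 \leq \|\lambda\|_1 \leq B_\lambda = \mathcal{R}$, the point $\lambda$ lies in the Euclidean ball of radius $\mathcal{R}$ centered at the origin. This is essentially a one-line verification.

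For the inner containment $\mathcal{B}_{\mathcal{R}_{in}} \subseteq \Lambda$, I would pick any $\lambda$ with $\|\lambda - \lambda_{in}\|_2 \leq \mathcal{R}_{in}$ and check the two defining conditions of $\Lambda$. First, nonnegativity: for each coordinate $i$, the componentwise bound $|\lambda_i - \mathcal{R}_{in}| \leq \|\lambda - \lambda_{in}\|_\infty \leq \|\lambda - \lambda_{in}\|_2 \leq \mathcal{R}_{in}$ yields $\lambda_i \geq 0$. Second, the $\ell_1$ bound: writing $\lambda = \lambda_{in} + (\lambda - \lambda_{in})$ and applying the triangle inequality together with the standard estimate $\|v\|_1 \leq \sqrt{m}\|v\|_2$ for $v \in \mathbb{R}^m$, I get
\begin{equation*}
    \|\lambda\|_1 \leq \|\lambda_{in}\|_1 + \|\lambda - \lambda_{in}\|_1 \leq m\mathcal{R}_{in} + \sqrt{m}\,\mathcal{R}_{in} = (m+\sqrt{m})\mathcal{R}_{in} = B_\lambda,
\end{equation*}
which is exactly the defining inequality of $\Lambda$.

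There is no real obstacle here; the only ``design'' issue is the choice of center $\lambda_{in} = \mathcal{R}_{in}\cdot\mathbf{1}_m$ and radius $\mathcal{R}_{in} = B_\lambda/(m+\sqrt{m})$, which are picked precisely so that the nonnegativity margin ($\mathcal{R}_{in}$ in every coordinate) and the $\ell_1$ budget ($m\mathcal{R}_{in}+\sqrt{m}\mathcal{R}_{in}$) match the constraints of $\Lambda$ with equality. The lemma is essentially a sanity check that the starting polytope used by Vaidya's method (with these values of $\mathcal{R}$ and $\mathcal{R}_{in}$ plugged into Theorem~\ref{thm:vaidya}) gives sensible geometric constants.
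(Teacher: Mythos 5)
Your proof is correct and follows essentially the same route as the paper: the outer inclusion via $\|\lambda\|_2\leq\|\lambda\|_1\leq B_\lambda$, and the inner inclusion by checking nonnegativity coordinatewise and bounding $\|\lambda\|_1$ by $(m+\sqrt{m})\mathcal{R}_{in}=B_\lambda$ (the paper obtains the latter by maximizing $\sum_i\lambda_i$ over the ball, which is the same Cauchy--Schwarz estimate you invoke as $\|v\|_1\leq\sqrt{m}\|v\|_2$). If anything, your write-up is slightly more explicit, since you justify $\lambda\in\mathbb{R}^m_+$ via the $\ell_\infty\leq\ell_2$ bound, a step the paper asserts without comment.
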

\begin{proof}
To prove the first inclusion, observe that for any $\lambda \in \mathcal{B}_{\mathcal{R}_{in}}$ it holds $\lambda \in \R^m_+$, which implies $\| \lambda\|_1 = \sum_{i=1}^m \lambda_i$. Maximization of this sum subject to constraint $\| \lambda - \lambda_{in} \|_2^2 \leq \mathcal{R}_{in}^2$ yields optimal point $\lambda^{(1)}:=\lambda_{in} + \frac{\mathcal{R}_{in}}{\sqrt{m}}\mathbf{1}_m$. Thus, for any $\lambda \in \mathcal{B}_{\mathcal{R}_{in}}$ we have $\lambda \in \R^m_+,\, \| \lambda\|_1 \leq \| \lambda^{(1)} \|_1 = B_{\lambda}\Rightarrow \lambda \in \Lambda$. The second inclusion follows from the inequality $\| \cdot \|_1 \leq \| \cdot \|_2$.
\end{proof}

The following lemma bounds the range of $d_{\tau}(\lambda)$
on $\Lambda$.

\begin{lemma}\label{lem_dual_func_bound}
The dual function $d_{\tau}(\lambda)$ on the set $\Lambda$ satisfies
\begin{equation}
    0 \leq d_{\tau}(\lambda) \leq B_d :=
    % \frac{1}{1-\gamma}\|\lambda\|_2 \cdot R_{max} + \tau \log{|\mathcal{A}|}
    \frac{r_{0,max} + \sqrt{m} B_{\lambda} R_{max} + \tau \log |\Ac|}{1-\gamma}.
\end{equation}
\end{lemma}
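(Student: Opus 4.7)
The plan is to prove the two inequalities separately by simply bounding each of the three terms $V_0^\pi(\rho)$, $\langle \lambda, V^\pi(\rho)-c\rangle$, and $\tau\mathcal{H}(\pi)$ in the Lagrangian $\mathcal{L}_\tau(\pi,\lambda)$, using the feasibility of $\pi_\xi$ from Assumption \ref{assumption_slater} for the lower bound and norm inequalities for the upper bound.

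For the lower bound $d_\tau(\lambda) \geq 0$, I would use the Slater policy $\pi_\xi$ as a feasible competitor in the inner maximization. Specifically,
\begin{equation*}
    d_\tau(\lambda) = \max_{\pi \in \Pi} \mathcal{L}_\tau(\pi, \lambda) \;\geq\; \mathcal{L}_\tau(\pi_\xi, \lambda) = V_0^{\pi_\xi}(\rho) + \langle \lambda, V^{\pi_\xi}(\rho) - c\rangle + \tau\mathcal{H}(\pi_\xi).
\end{equation*}
Each summand is nonnegative: $V_0^{\pi_\xi}(\rho) \geq 0$ because $r_0$ is positive, $\mathcal{H}(\pi_\xi) \geq 0$ as a discounted entropy of a probability distribution, and $\langle \lambda, V^{\pi_\xi}(\rho) - c\rangle \geq \xi\|\lambda\|_1 \geq 0$ since $V^{\pi_\xi}(\rho) - c \geq \xi \mathbf{1}_m$ componentwise and $\lambda \in \R^m_+$.

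For the upper bound, fix any $\pi \in \Pi$ and $\lambda \in \Lambda$. The reward-$0$ term obeys $V_0^\pi(\rho) \leq r_{0,\max}/(1-\gamma)$ from the definition of the discounted value function and the bound $r_0 \leq r_{0,\max}$. Since $c \geq 0$ and $\lambda \geq 0$, we have $\langle \lambda, V^\pi(\rho) - c\rangle \leq \langle \lambda, V^\pi(\rho)\rangle$, and applying Hölder's inequality together with $\|V^\pi(\rho)\|_1 \leq \sqrt{m}\|V^\pi(\rho)\|_2 \leq \sqrt{m}\,R_{\max}/(1-\gamma)$ yields
\begin{equation*}
    \langle \lambda, V^\pi(\rho)\rangle \;\leq\; \|\lambda\|_\infty \|V^\pi(\rho)\|_1 \;\leq\; \frac{\sqrt{m}\,B_\lambda R_{\max}}{1-\gamma},
\end{equation*}
using $\|\lambda\|_\infty \leq \|\lambda\|_1 \leq B_\lambda$ on $\Lambda$. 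The entropy term satisfies $\mathcal{H}(\pi) \leq \log|\mathcal{A}|/(1-\gamma)$ by the standard bound on a discounted sum of per-step entropies each at most $\log|\mathcal{A}|$. Summing the three bounds and taking the supremum over $\pi$ gives $d_\tau(\lambda) \leq B_d$.

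I expect no real obstacle: the lemma is purely about sizing the Lagrangian, and the only subtle choice is which norm inequality to use to obtain the $\sqrt{m}$ factor matching the stated $B_d$. The routing $\|\lambda\|_\infty \cdot \|V^\pi\|_1$ is the cleanest way to expose that factor while respecting the $\ell_1$-bound on $\Lambda$ and the $\ell_2$-style definition of $R_{\max}$.
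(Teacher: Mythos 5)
Your proof is correct and follows essentially the same route as the paper: bound each of the three terms of $\mathcal{L}_\tau(\pi,\lambda)$ separately and take the maximum over $\pi$, the only cosmetic difference being that the paper uses Cauchy--Schwarz $\|\lambda\|_2\,\|V^{\pi}(\rho)\|_2 \leq B_\lambda R_{\max}/(1-\gamma)$ (which in fact gives the stated bound without needing the $\sqrt{m}$ factor) while you route through $\|\lambda\|_\infty\,\|V^{\pi}(\rho)\|_1$, and that you make the lower bound $d_\tau(\lambda)\geq 0$ explicit via the Slater policy, which the paper leaves unstated.
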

\begin{proof}
\begin{align*}
    0 \leq d_{\tau}(\lambda) &= 
    \max_{\pi \in \Pi} V_0^{\pi}(\rho) + 
    \langle \lambda, V^{\pi}(\rho) - c \rangle+ \tau \H(\pi) \\
    &\leq \frac{r_{0,max}}{1-\gamma} + \frac{1}{1-\gamma}\|\lambda\|_2 \cdot R_{max}
    + \frac{\tau \log{|\mathcal{A}|}}{1-\gamma}.
\end{align*}
\end{proof}

Now we establish the fact that the dual function 
$d_{\tau}(\lambda)$ is differentiable on $\Lambda$,
and state what its gradient is.

\begin{proposition}
\label{dtau_gradient}
Suppose assumption \ref{assumption_tau_optimal_unique} holds.
Then $d_{\tau}(\lambda)$ is differentiable
for all $\lambda \in \Lambda$, and
\begin{equation}
    \nabla d_{\tau}(\lambda) = V^{\pi_{\tau,\lambda}^*}(\rho) - c,
    % = (V_i^{\pi_{\tau,\lambda}^*}(\rho) - c_i)_{i = 1, ... m}
\end{equation}
where $\pi^*_{\tau, \lambda} := \argmax_{\pi \in \Pi} \L_{\tau}(\pi, \lambda)$.
\end{proposition}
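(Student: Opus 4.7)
The plan is to recognize this as a direct application of Danskin's theorem (envelope theorem) and to check that its hypotheses are satisfied in our setting. The key observation is that, for each fixed $\pi$, the function $\mathcal{L}_\tau(\pi, \lambda) = V_0^\pi(\rho) + \langle \lambda, V^\pi(\rho) - c\rangle + \tau \mathcal{H}(\pi)$ is \emph{affine} (in fact linear plus a constant) in $\lambda$, with $\nabla_\lambda \mathcal{L}_\tau(\pi,\lambda) = V^\pi(\rho) - c$. Hence $d_\tau(\lambda)$ is a supremum of affine functions of $\lambda$, which makes it convex on $\R^m$.

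First I would verify the technical hypotheses of Danskin's theorem. The set $\Pi$ is a product of probability simplices, hence compact. The map $\pi \mapsto \nu_\rho^\pi$ is continuous (it is in fact analytic, as the entries of $\nu_\rho^\pi$ are obtained by a geometric series $(1-\gamma)\rho^\top \sum_{t=0}^\infty (\gamma P^\pi)^t$ and the inverse $(I-\gamma P^\pi)^{-1}$ depends continuously on $\pi$), and so is $\pi \mapsto V_i^\pi(\rho) = \langle \nu_\rho^\pi, r_i\rangle/(1-\gamma)$. The discounted entropy $\pi \mapsto \mathcal{H}(\pi)$ is continuous on $\Pi$ (extending $x\log x$ by $0$ at $0$). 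Hence $\mathcal{L}_\tau(\cdot, \lambda)$ is continuous on the compact set $\Pi$, so the maximum is attained. By Assumption~\ref{assumption_tau_optimal_unique}, the maximizer is unique and equals $\pi^*_{\tau,\lambda}$.

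Next I would invoke Danskin's theorem in the following form: if $f(\pi,\lambda)$ is continuous on $\Pi \times U$ (with $\Pi$ compact, $U$ open in $\R^m$), continuously differentiable in $\lambda$, and if for some $\lambda_0\in U$ the set $\arg\max_\pi f(\pi,\lambda_0)=\{\pi^*\}$ is a singleton, then $d(\lambda):=\max_\pi f(\pi,\lambda)$ is (Gateaux) differentiable at $\lambda_0$ with $\nabla d(\lambda_0) = \nabla_\lambda f(\pi^*, \lambda_0)$. Applying this to $f(\pi,\lambda) = \mathcal{L}_\tau(\pi,\lambda)$ at any $\lambda \in \Lambda$ yields the identity
\begin{equation*}
    \nabla d_\tau(\lambda) = \nabla_\lambda \mathcal{L}_\tau(\pi^*_{\tau,\lambda}, \lambda) = V^{\pi^*_{\tau,\lambda}}(\rho) - c,
\end{equation*}
as claimed. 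Because $d_\tau$ is convex and Gateaux differentiable, Gateaux differentiability automatically upgrades to Fréchet differentiability, so the statement holds in the usual sense.

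The only potentially delicate point is the envelope-theorem step. If one prefers not to quote Danskin directly, it can be proved by hand in a few lines: for any $h\in\R^m$, convexity of $d_\tau$ gives $d_\tau(\lambda + th) \ge \mathcal{L}_\tau(\pi^*_{\tau,\lambda}, \lambda + th) = d_\tau(\lambda) + t\langle V^{\pi^*_{\tau,\lambda}}(\rho) - c, h\rangle$, and a symmetric argument using a maximizer $\pi_t$ at $\lambda + th$ together with compactness of $\Pi$ and uniqueness of $\pi^*_{\tau,\lambda}$ (forcing $\pi_t \to \pi^*_{\tau,\lambda}$ along every subsequence as $t\to 0$) provides the matching upper bound, yielding the directional derivative $\langle V^{\pi^*_{\tau,\lambda}}(\rho) - c, h\rangle$ for every $h$, hence differentiability.
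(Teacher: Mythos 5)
Your proposal is correct and follows essentially the same route as the paper: the paper's proof also verifies compactness of $\Pi$, continuity of $\L_{\tau}$, linearity of $\L_{\tau}(\pi,\cdot)$ in $\lambda$, and uniqueness of the maximizer from Assumption \ref{assumption_tau_optimal_unique}, then invokes Danskin's theorem to conclude $\nabla d_\tau(\lambda) = V^{\pi^*_{\tau,\lambda}}(\rho) - c$. Your additional remarks (continuity of $\pi \mapsto \nu_\rho^\pi$, the hand-made envelope argument, and the Gateaux-to-Fréchet upgrade) are fine elaborations but not needed beyond what the paper does.
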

\begin{proof}
We will apply Danskin's theorem
from \cite{DanskinTheorem} (Proposition B.25, (a)) to
$\L_{\tau}(\pi, \lambda)$, which is 
% we will consider to be 
defined on $\Pi \times \mathbb{R}^m$.
Note that $\Pi$ is compact, %(playing role of $Z$),
$\L_{\tau}(\cdot, \cdot)$ is continuous, and $\L_{\tau}(\pi, \cdot)$
is linear (and hence convex and differentiable)
for all $\pi \in \Pi$. Then, according
to Assumption \ref{assumption_tau_optimal_unique},
for $\lambda \in \Lambda$, we also have that the maximizer
for
\begin{equation}
    d_{\tau}(\lambda) = \max_{\pi} \L_{\tau}(\pi, \lambda)
\end{equation}
is unique and equal to $\pi_{\tau, \lambda}^*$.
From Danskin's theorem it then follows,
that $d_{\tau}(\lambda)$ is differentiable for all
$\lambda \in \Lambda$, and
\begin{equation}
    \nabla d_\tau(\lambda) = 
    \nabla_{\lambda} \L_{\tau}(\pi_{\tau, \lambda}^*, \lambda)
    =  V^{\pi_{\tau,\lambda}^*}(\rho) - c.
\end{equation}
\end{proof}

The following two lemmas are required to prove that $d_{\tau}(\lambda)$ is smooth, that is, its gradient is Lipschitz continuous.
% The next lemma is a technical one.

\begin{lemma}
\label{smoothness_technical}
Set any $\tau > 0$.
Define the following regularized softmax function
for $x \in \mathbb{R}^n$:
\begin{equation}
    S_{\tau}(x)_i = \frac{\exp(x_i/\tau)}{\sum_{j=1}^n\exp(x_j/\tau)}.
\end{equation}
Then for any $x, x' \in \mathbb{R}^n$ it holds that:
\begin{equation}
    \norm{S_{\tau}(x)-S_{\tau}(x')}_1 \leq \frac{1}{\tau}\norm{x-x'}_{\infty}.
\end{equation}
\end{lemma}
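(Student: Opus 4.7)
My plan is to prove the lemma by identifying $S_\tau$ as the gradient of the scaled log-sum-exp $\Phi_\tau(x) := \tau \log\sum_{j=1}^n \exp(x_j/\tau)$, which is the Fenchel conjugate of the scaled negative entropy $h_\tau(p) := \tau \sum_{i=1}^n p_i \log p_i$, extended by $+\infty$ off the probability simplex in $\mathbb{R}^n$. The first step is to invoke Pinsker's inequality to conclude that $h_\tau$ is $\tau$-strongly convex with respect to $\norm{\cdot}_1$ on the simplex. By the standard strong-convexity/smoothness duality (of the flavour used in the appendix of \cite{juditsky2005recursive}), the conjugate $\Phi_\tau$ then has a $(1/\tau)$-Lipschitz gradient in the pair of dual norms, that is, $\norm{\nabla\Phi_\tau(x) - \nabla\Phi_\tau(x')}_1 \leq (1/\tau)\norm{x-x'}_\infty$. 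Since $\nabla\Phi_\tau(x) = S_\tau(x)$ by direct differentiation, this delivers the claim.

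As a self-contained alternative, I would compute the Jacobian directly. By the fundamental theorem of calculus along the segment from $x'$ to $x$, it suffices to bound $\norm{J_\tau(y)}_{\infty \to 1} \leq 1/\tau$ uniformly in $y$, where $J_\tau(y) = \frac{1}{\tau}\bigl(\operatorname{diag}(S) - SS^\top\bigr)$ with $S := S_\tau(y)$. For any $v$, setting $\bar v := \langle S, v\rangle$ gives $\norm{J_\tau(y) v}_1 = \frac{1}{\tau} \sum_i S_i |v_i - \bar v|$. Splitting the index set at the $S$-weighted mean via $A := \{i : v_i \geq \bar v\}$ and using $\sum_i S_i (v_i - \bar v) = 0$, one obtains $\sum_i S_i |v_i - \bar v| = 2 p (1-p)(\bar v_A - \bar v_{A^c})$, where $p := \sum_{i \in A} S_i$ and $\bar v_A, \bar v_{A^c}$ denote the conditional $S$-means on $A$ and $A^c$. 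The bound $2p(1-p) \leq 1/2$ together with $\bar v_A - \bar v_{A^c} \leq \max_i v_i - \min_i v_i \leq 2\norm{v}_\infty$ then completes the argument.

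The main obstacle is obtaining the sharp constant $1/\tau$: the crude estimate $|v_i - \bar v| \leq 2\norm{v}_\infty$ only yields a Lipschitz constant of $2/\tau$, which would in turn weaken the Lipschitz bound on the dual-function gradient (precisely the bound whose sharpness relative to \cite{lanarcpo} the authors emphasize in the introduction). Recovering the tight factor requires either the $2p(1-p) \leq 1/2$ cancellation in the direct computation, or equivalently the sharp form of Pinsker's inequality in the duality route. All other ingredients---smoothness of $S_\tau$ on $\mathbb{R}^n$, and the explicit gradient formula for log-sum-exp---are routine.
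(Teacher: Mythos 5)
Your proposal is correct, and your ``self-contained alternative'' is essentially the paper's own proof: the paper writes $S_{\tau}=\nabla H_{\tau}$ with $H_{\tau}(x)=\tau\log\sum_i e^{x_i/\tau}$, applies a mean-value inequality along the segment, and bounds the bilinear form $u^{\top}\nabla^2 H_{\tau}(z)v$ for $\|u\|_{\infty}=\|v\|_{\infty}=1$ by $\frac{1}{\tau}\sum_{i\neq j}a_ia_j|v_i-v_j|\leq\frac{1}{\tau}$ --- the same object as your Jacobian $J_{\tau}(y)=\frac{1}{\tau}\bigl(\operatorname{diag}(S)-SS^{\top}\bigr)$ and the same operator-norm bound $\|J_{\tau}\|_{\infty\to 1}\leq \frac{1}{\tau}$. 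If anything, your mean-split identity $\sum_i S_i|v_i-\bar v|=2p(1-p)(\bar v_A-\bar v_{A^c})$ with $2p(1-p)\leq\frac12$ makes explicit the cancellation that the paper's final inequality states rather tersely (their $\sum_{i\neq j}a_ia_j|v_i-v_j|\leq 1$ requires exactly this kind of argument, since the crude bound $|v_i-v_j|\leq 2$ alone would only give $2$), so your write-up is, in that respect, more complete. Your first route --- Pinsker-based $\tau$-strong convexity of the scaled negative entropy on the simplex plus the strong-convexity/smoothness conjugacy, giving a $(1/\tau)$-Lipschitz gradient of the log-sum-exp in the $(\ell_\infty,\ell_1)$ norm pair --- is a genuinely different and equally valid derivation; it is in fact the argument the authors allude to in the introduction when citing the appendix of Juditsky and Nemirovski, even though the appendix proof they actually give is the direct Hessian computation. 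The duality route buys generality (it immediately extends to other strongly convex regularizers), while the direct computation keeps the lemma elementary and self-contained; both deliver the sharp constant $1/\tau$ that the subsequent smoothness bound on $d_{\tau}$ relies on.
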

\begin{proof}
First notice that $S_{\tau} = \nabla H_{\tau}$, where:
\begin{equation}
    H_{\tau}(x) = \tau\log\left(\sum_{i=1}^n e^{x_i/\tau}\right).
\end{equation}
So we can write:
\begin{align}
\label{stau_estimate}
    \norm{S_{\tau}(x)-S_{\tau}(x')}_1 &=
    \norm{\nabla H_{\tau}(x)- \nabla H_{\tau}(x')}_1 \stackrel{(i)}{\leq}
    \sup_{z \in \mathbb{R}^n} \norm{\nabla^2H_{\tau}(z)(x'-x)}_1 =\\
    &= \sup_{\substack{z \in \R^n, \\
    u \in \R^n, \norm{u}_{\infty} = 1}}
    u^\top\nabla^2H_{\tau}(z)(x'-x)
    \leq\\
    &\leq 
    \norm{x'-x}_{\infty} \cdot
    \sup_{\substack{z \in \R^n, \\
    u \in \R^n, \norm{u}_{\infty} = 1\\
    v \in \R^n, \norm{v}_{\infty} = 1}}
    u^\top\nabla^2H_{\tau}(z)v,
\end{align}
where $(i)$ can be obtained by considering a function 
$W(t) = \nabla H_\tau(x(1-t) + yt)$ and applying Lagrange
mean inequality for it with $l_1$-norm.

Calculate $\nabla^2H_{\tau}(z)$:
\begin{align}
    \pdv{H}{z_i}{z_j} = \frac{\frac{1}{\tau}\exp(z_i/\tau)\delta_{ij}}
    {(\sum_{k=1}^n{\exp(z_k)/\tau})} - 
    \frac{\frac{1}{\tau}\exp(x_i/\tau)\exp(z_j/\tau)}
    {(\sum_{k=1}^n{\exp(z_k)/\tau})^2}.
\end{align}

Fix some $z \in \mathbb{R}^n$. 
Let $a_i = \frac{\exp(z_i/\tau)}{\sum_{k=1}^n \exp(z_k/\tau)}$.
Note that $\sum_{k=1}^n a_k = 1$, and:
\begin{align}
    \nabla^2H_{\tau}(z)_{ij} = \frac{1}{\tau}a_i\delta_{ij} - \frac{1}{\tau}a_ia_j.
\end{align}

Under the supremum, knowing $\norm{u}_{\infty} = \norm{v}_{\infty} = 1$, we have:
\begin{align}
    u^\top\nabla^2H_{\tau}(z)v &= \frac{1}{\tau}
    \left(\sum_{i=1}a_i u_i v_i -
    \sum_{i=1}^n \sum_{j=1}^n a_ia_ju_iv_j\right) =\\
    &=\frac{1}{\tau}\left(
    \sum_{i=1}^n a_iu_i\left(v_i - \sum_{j=1}^n a_jv_j\right)
    \right)
    =
    \frac{1}{\tau}\left(
    \sum_{i=1}^n a_iu_i\left(\sum_{\substack{j=1\\j\neq i}}^n a_j(v_i-v_j)\right)
    \right)
    \leq\\
    &\leq
    \frac{1}{\tau}\sum_{i=1}^n\sum_{\substack{j=1\\j\neq i}}^n a_ia_j\abs{v_i-v_j}
    \leq \frac{1}{\tau}.
\end{align}

Using this in \ref{stau_estimate}, we finally get:
\begin{align}
    \norm{S_\tau(x) - S_\tau(x')}_1 \leq \frac{1}{\tau}\norm{x'-x}_{\infty}.
\end{align}
\end{proof}

The next result is a corrected and enhanced version of Lemma 7 from \cite{lanarcpo} with a bound improved in a factor of two.
% and is needed to prove smoothness of the dual function.

\begin{lemma}[Lemma 7 from \cite{lanarcpo}, corrected and enhanced]
\label{regoptpolicy_smoothness}
The optimal policy for regularized MDP is smooth with respect to $\lambda$,
i.e., for all $\lambda, \lambda' \in \mathbb{R}^m_{+}$, we have:
\begin{equation}
    \max\limits_{s \in \Sc}
    \norm{\pi^{*}_{\tau,\lambda}(\cdot\vert s)-\pi^{*}_{\tau,\lambda'}(\cdot\vert s)}_1
    \leq \frac{R_{\mathrm{max}}}{(1-\gamma)\tau}\norm{\lambda-\lambda'}_2.
\end{equation}
\end{lemma}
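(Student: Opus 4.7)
The plan is to exploit the explicit softmax representation of the regularized optimal policy. Under Assumption \ref{assumption_tau_optimal_unique} and the recurrences of \cite{UniqueRegularized} (the same fixed-point satisfied by the NPG update \eqref{eq:entropy_npg}), for any $\tau > 0$ and any $\lambda \in \mathbb{R}^m_+$ the unique maximizer of the entropy-regularized Lagrangian admits the form
$$\pi^{*}_{\tau,\lambda}(a \vert s) = \frac{\exp\!\bigl(Q^{*}_{\tau,\lambda}(s,a)/\tau\bigr)}{\sum_{a'} \exp\!\bigl(Q^{*}_{\tau,\lambda}(s,a')/\tau\bigr)} = S_{\tau}\!\bigl(Q^{*}_{\tau,\lambda}(s,\cdot)\bigr)_{a},$$
where $Q^{*}_{\tau,\lambda}$ is the soft Q-function associated with the reward $r_{\lambda} = r_0 + \sum_{i=1}^m \lambda_i r_i$ and $S_\tau$ is the regularized softmax from Lemma \ref{smoothness_technical}. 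Applying Lemma \ref{smoothness_technical} state-by-state immediately yields
$$\norm{\pi^{*}_{\tau,\lambda}(\cdot \vert s) - \pi^{*}_{\tau,\lambda'}(\cdot \vert s)}_{1} \leq \frac{1}{\tau} \norm{Q^{*}_{\tau,\lambda}(s,\cdot) - Q^{*}_{\tau,\lambda'}(s,\cdot)}_{\infty},$$
reducing the claim to a sup-norm Lipschitz bound on the soft Q-function in $\lambda$. It is precisely the $\norm{\cdot}_{\infty}$ on the right hand side that saves the factor of $\abs{\Ac}$ relative to the proof of Lemma 7 in \cite{lanarcpo}.

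Next, I would control $V^{*}_{\tau,\cdot}(s)$ as a function of $\lambda$. For any fixed policy $\pi$, the discounted entropy term is independent of $\lambda$, so
$$V^{\pi}_{\tau,\lambda}(s) - V^{\pi}_{\tau,\lambda'}(s) = \mathbb{E}\!\left[\sum_{t=0}^{\infty} \gamma^{t} \langle \lambda - \lambda',\, \tilde r(s_t,a_t) \rangle \,\Bigm|\, s_0 = s\right],$$
with $\tilde r := (r_1, \ldots, r_m)^{\top}$. Cauchy--Schwarz bounds $|\langle \lambda - \lambda', \tilde r(s,a)\rangle|$ pointwise by $R_{\max}\norm{\lambda - \lambda'}_{2}$, so summing the geometric series produces the $\pi$-uniform estimate $|V^{\pi}_{\tau,\lambda}(s) - V^{\pi}_{\tau,\lambda'}(s)| \leq \tfrac{R_{\max}}{1-\gamma}\norm{\lambda - \lambda'}_{2}$. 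A standard sandwich using $V^{*}_{\tau,\lambda}(s) = V^{\pi^{*}_{\tau,\lambda}}_{\tau,\lambda}(s)$ and $V^{*}_{\tau,\lambda'}(s) \geq V^{\pi^{*}_{\tau,\lambda}}_{\tau,\lambda'}(s)$ (and its mirror) upgrades this to the same Lipschitz bound for $V^{*}_{\tau,\cdot}$. Plugging the result into the soft Bellman identity
$$Q^{*}_{\tau,\lambda}(s,a) = r_{\lambda}(s,a) + \gamma\, \mathbb{E}_{s' \sim P(\cdot \vert s,a)}\bigl[V^{*}_{\tau,\lambda}(s')\bigr]$$
and applying Cauchy--Schwarz once more to $r_\lambda - r_{\lambda'}$ gives
$$\norm{Q^{*}_{\tau,\lambda}(s,\cdot) - Q^{*}_{\tau,\lambda'}(s,\cdot)}_{\infty} \leq R_{\max}\norm{\lambda-\lambda'}_{2} + \gamma\cdot\tfrac{R_{\max}}{1-\gamma}\norm{\lambda-\lambda'}_{2} = \tfrac{R_{\max}}{1-\gamma}\norm{\lambda-\lambda'}_{2},$$
and substituting into the softmax Lipschitz display above finishes the proof.

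The main conceptual obstacle is simply justifying the softmax representation of $\pi^{*}_{\tau,\lambda}$ in the infinite-horizon discounted setting; everything else collapses to a Cauchy--Schwarz estimate on $\langle \lambda - \lambda', \tilde r\rangle$, a sandwich between optimal policies, and a single invocation of Lemma \ref{smoothness_technical}. The $\abs{\Ac}$-factor gain over \cite{lanarcpo} is purchased entirely by measuring the Q-value perturbation in $\norm{\cdot}_{\infty}$ rather than in $\norm{\cdot}_{1}$, which is exactly the payoff of the sharper softmax Lipschitz estimate of Lemma \ref{smoothness_technical}.
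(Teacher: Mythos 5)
Your proposal is correct and follows essentially the same route as the paper's proof: the softmax representation of $\pi^{*}_{\tau,\lambda}$ via its soft Q-function from \cite{UniqueRegularized}, the $\ell_\infty\!\to\!\ell_1$ Lipschitz bound of Lemma \ref{smoothness_technical} (which is exactly where the $\abs{\Ac}$ factor is saved), and the $\frac{R_{\max}}{1-\gamma}\norm{\lambda-\lambda'}_2$ Lipschitz estimate for the soft Q-function obtained from Cauchy--Schwarz and the Bellman recursion. The only cosmetic difference is that you run the sandwich argument on $V^{*}_{\tau,\cdot}$ and then invoke the soft Bellman equation for $Q^{*}_{\tau,\cdot}$, whereas the paper bounds $\abs{Q^{\pi}_{\tau,\lambda}-Q^{\pi}_{\tau,\lambda'}}$ uniformly over $\pi$ and takes a maximum over policies; these are equivalent.
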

\begin{proof}
Proof goes same as in \cite{lanarcpo}, except we bound the promised
$l_1$-norm on the left hand side, instead of $l_{\infty}$ norm.

As was proved in \cite{UniqueRegularized}, the regularized optimal policy
can be expressed in terms of its soft Q-function:
\begin{equation}
    \pi^{*}_{\tau,\lambda}(a\vert s) = \frac
    {\exp(Q^{*}_{\tau,\lambda}(s,a)/\tau)}
    {\sum_{a' \in \Ac}\exp(Q^{*}_{\tau,\lambda}(s,a')/\tau)}.
\end{equation}

Fix some $s \in \Sc$. Using \ref{smoothness_technical}
for $x = Q^*_{\tau,\lambda}(s\vert \cdot),
x' = Q^*_{\tau,\lambda'}(s\vert \cdot)
$ and $\tau$, we get:
\begin{align}
    \norm{\pi^*_{\tau,\lambda}(\cdot \vert s)
    - \pi^*_{\tau,\lambda'}(\cdot \vert s)}_1 \leq 
    \frac{1}{\tau}\norm{Q^*_{\tau,\lambda}(s\vert \cdot)
    - Q^*_{\tau,\lambda'}(s\vert \cdot)}_{\infty} \leq
    \frac{1}{\tau}\norm{Q^*_{\tau,\lambda}
    - Q^*_{\tau,\lambda'}}_{\infty}.
\label{start_estimate}
\end{align}
Furthermore,
\begin{align}
    \norm{Q^*_{\tau,\lambda}
    - Q^*_{\tau,\lambda'}}_{\infty} &\leq
    \max\limits_{s \in \Sc, a \in \Ac}
    \norm{Q^*_{\tau,\lambda}(s,a)
    - Q^*_{\tau,\lambda'}(s,a)}_{\infty}
    \leq\\
    &\leq \max\limits_{s \in \Sc, a \in \Ac} 
    \max_{\pi \in \Pi} \abs{Q^\pi_{\tau,\lambda}(s,a)
    - Q^\pi_{\tau,\lambda'}(s,a)}
    \stackrel{(i)}{\leq}\\
    &\leq \frac{R_{\max}}{1-\gamma}\norm{\lambda-\lambda'}_2,
\label{last_norm_estimate}
\end{align}
where $(i)$ is due to
\begin{align}
    \abs{Q^{\pi}_{\tau,\lambda}(s,a) - Q^{\pi}_{\tau,\lambda'}(s,a)}
    &\leq
    \abs{r_{\lambda}(s,a) - r_{\lambda'}(s,a)} + 
    \gamma \mathbb{E}_{s' \sim P(\cdot\vert s,a)}
    \left[\abs{V^{\pi}_{\tau,\lambda}(s') - V^{\pi}_{\tau,\lambda'}(s')}\right]
    \leq\\
    &\leq
    R_{\max}\norm{\lambda-\lambda'}_2 + \gamma
    \cdot \frac{1}{1-\gamma}R_{\max}\norm{\lambda-\lambda'}_2 =\\
    &=\frac{R_{\max}\norm{\lambda-\lambda'}_2}{1-\gamma}.
\end{align}

Substituting \ref{last_norm_estimate} into \ref{start_estimate},
we get the desired result.
\end{proof}

The next proposition specifies the smoothness coefficient for $d_\tau$. %kek

\begin{proposition}\label{prop_smooth}
Suppose assumptions \ref{assumption_tau_optimal_unique},
\ref{assumption_ergodicity}
hold, then $d_{\tau}(\lambda)$ is $L_{d}$-smooth:
$$
\left\|\nabla d_{\tau}(\lambda)-\nabla d_{\tau}\left(\lambda^{\prime}\right)\right\|_{2} \leq L_{d}\left\|\lambda-\lambda^{\prime}\right\|_{2}, \quad \forall \lambda, \lambda^{\prime} \in \Lambda,% \eg{\, (was\, \R_+^m)},
$$
where $L_{d}=\frac{R_{\max }^{2} L_{\beta}}{(1-\gamma)^{2} \tau}$, $L_{\beta}:=\left\lceil\log _{\beta}\left(C_{M}^{-1}\right)\right\rceil+(1-\beta)^{-1}+1$.
\end{proposition}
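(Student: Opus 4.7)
The plan is to write the gradient in closed form via Proposition~\ref{dtau_gradient}, which gives $\nabla d_{\tau}(\lambda) = V^{\pi^{*}_{\tau,\lambda}}(\rho) - c$, and then control how the optimal regularized policy and its induced value vector vary with $\lambda$. Accordingly,
\begin{equation*}
    \bigl\| \nabla d_{\tau}(\lambda) - \nabla d_{\tau}(\lambda') \bigr\|_{2} = \bigl\| V^{\pi^{*}_{\tau,\lambda}}(\rho) - V^{\pi^{*}_{\tau,\lambda'}}(\rho) \bigr\|_{2},
\end{equation*}
so the problem reduces to a perturbation bound on the $m$-vector of value functions when the policy is changed from $\pi^{*}_{\tau,\lambda}$ to $\pi^{*}_{\tau,\lambda'}$.

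Next I would pass to the discounted state-action visitation distribution. Writing $V_{i}^{\pi}(\rho) = \langle \nu_{\rho}^{\pi}, r_{i}\rangle / (1-\gamma)$ and applying Hölder's inequality component-wise yields $|V_{i}^{\pi}(\rho) - V_{i}^{\pi'}(\rho)| \leq r_{i,\max} \| \nu_{\rho}^{\pi} - \nu_{\rho}^{\pi'} \|_{1}/(1-\gamma)$; combining across $i=1,\dots,m$ with the definition $R_{\max} = \sqrt{\sum_{i} r_{i,\max}^{2}}$ gives
\begin{equation*}
    \bigl\| V^{\pi}(\rho) - V^{\pi'}(\rho) \bigr\|_{2} \leq \frac{R_{\max}}{1-\gamma} \, \bigl\| \nu_{\rho}^{\pi} - \nu_{\rho}^{\pi'} \bigr\|_{1}.
\end{equation*}
Therefore it suffices to bound $\| \nu_{\rho}^{\pi^{*}_{\tau,\lambda}} - \nu_{\rho}^{\pi^{*}_{\tau,\lambda'}} \|_{1}$ in terms of $\|\lambda - \lambda'\|_{2}$.

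This splits into two ingredients. First, a Mitrophanov-type perturbation bound for the visitation measure: under the uniform ergodicity Assumption~\ref{assumption_ergodicity}, for any two policies $\pi, \pi'$ whose induced Markov chains mix with the given $(C_{M}, \beta)$, one has
\begin{equation*}
    \bigl\| \nu_{\rho}^{\pi} - \nu_{\rho}^{\pi'} \bigr\|_{1} \leq L_{\beta} \cdot \max_{s \in \Sc} \bigl\| \pi(\cdot\vert s) - \pi'(\cdot\vert s) \bigr\|_{1},
\end{equation*}
with $L_{\beta} = \lceil \log_{\beta}(C_{M}^{-1}) \rceil + (1-\beta)^{-1} + 1$; this is where the whole $L_{\beta}$ factor enters, and it is the main technical obstacle — it is not an algebraic manipulation but a sensitivity estimate for the stationary/visitation distribution of a perturbed ergodic chain (essentially \cite{mitrophanov2005sensitivity,zou2019finite}, used analogously in \cite{lanarcpo}). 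Second, Lemma~\ref{regoptpolicy_smoothness} supplies exactly the Lipschitz bound on the regularized optimal policy,
\begin{equation*}
    \max_{s \in \Sc} \bigl\| \pi^{*}_{\tau,\lambda}(\cdot\vert s) - \pi^{*}_{\tau,\lambda'}(\cdot\vert s) \bigr\|_{1} \leq \frac{R_{\max}}{(1-\gamma)\tau}\,\|\lambda - \lambda'\|_{2}.
\end{equation*}

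Chaining the three inequalities gives
\begin{equation*}
    \bigl\| \nabla d_{\tau}(\lambda) - \nabla d_{\tau}(\lambda') \bigr\|_{2} \leq \frac{R_{\max}}{1-\gamma} \cdot L_{\beta} \cdot \frac{R_{\max}}{(1-\gamma)\tau}\,\|\lambda - \lambda'\|_{2} = \frac{R_{\max}^{2} L_{\beta}}{(1-\gamma)^{2}\tau}\,\|\lambda - \lambda'\|_{2},
\end{equation*}
which is the claimed bound with $L_{d} = R_{\max}^{2} L_{\beta} / ((1-\gamma)^{2}\tau)$. The only nontrivial ingredient beyond what is already in the paper is the visitation-measure perturbation estimate; once that is in hand, smoothness is just a three-line composition of Lipschitz constants.
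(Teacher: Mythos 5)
Your proposal is correct and follows essentially the same route as the paper, which simply delegates to Proposition 1 of \cite{lanarcpo} (with $\mu=0$) and substitutes the corrected Lemma~\ref{regoptpolicy_smoothness}: Danskin's gradient formula, the H\"older reduction to $\|\nu_\rho^\pi-\nu_\rho^{\pi'}\|_1$, the Mitrophanov-type visitation-measure bound with constant $L_\beta$ (Lemma 6 of \cite{lanarcpo}), and the policy Lipschitz bound in $\lambda$. Your chaining of the three Lipschitz constants reproduces exactly the claimed $L_d = R_{\max}^2 L_\beta/((1-\gamma)^2\tau)$, so nothing is missing beyond the cited perturbation lemma you correctly identify.
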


\begin{proof}
See proof in \cite{lanarcpo} (Proposition 1),
with $\mu = 0$. Instead of Lemma 7
in it, our version 
\ref{regoptpolicy_smoothness} can be used, making sure the needed 
inequality is correct and improving $L_d$ by a factor of 2.
\end{proof}

% ========================================================================================

% Suppose for a moment that we could solve the problem $\max_{\pi \in \Pi} \L_{\tau}(\pi, \lambda)$ exactly.
The following two lemmas provide a bound on optimality gap and constraint violation in terms of the dual function.
% Then by the following lemma,
% we get that if we have found a dual variable 
% that satisfies specific conditions on $\nabla d_{\tau}(\lambda)$, the policy
% maximizing lagrangian \eqref{ltau} for it has an optimiality gap and constraint violation
% that can be bounded.
\begin{lemma}
\label{Lemma1}
Suppose Assumption \ref{assumption_tau_optimal_unique} holds and let $\lambda \in \Lambda$, then
% If a point $\Tilde{\lambda} \in \Lambda$ satisfies
% \begin{equation}
%     \bigl\langle \Tilde{\lambda}, \nabla d_{\tau}(\Tilde{\lambda}) \bigr\rangle \leq \epsilon,\quad \bigl\| [ -\nabla d_{\tau}(\Tilde{\lambda}) ]_+ \bigr\|_2 \leq \Tilde{\epsilon},
% \end{equation}
% then it holds% with $\Tilde{\pi}:= \arg \max_{\pi \in \Pi} \L_{\tau}(\pi, \Tilde{\lambda})$
% \begin{gather}
%     V_{0}^{*}(\rho) - V_{0}^{\pi_{\tau,\lambda}^*}(\rho) \leq \epsilon \bigl\langle \lambda, \nabla d_{\tau}(\Tilde{\lambda}) \bigr\rangle + \tau \mathcal{H}(\Tilde{\pi}) \leq \epsilon + \tau \frac{\log |\mathcal{A}|}{1 - \gamma}, \\
%     \bigl\| [ c - V^{\Tilde{\pi}}(\rho) ]_+ \bigr\|_2 \leq \Tilde{\epsilon},
% \end{gather}
\begin{gather}
    V_{0}^{*}(\rho) - V_{0}^{\pi_{\tau,\lambda}^*}(\rho) \leq \bigl\langle \lambda, \nabla d_{\tau}(\lambda) \bigr\rangle + \tau \mathcal{H}\left(\pi_{\tau,\lambda}^*\right),\\% \leq \epsilon + \tau \frac{\log |\mathcal{A}|}{1 - \gamma}, \\
    \bigl\| [ c - V^{\pi_{\tau,\lambda}^*}(\rho) ]_+ \bigr\|_2 = \bigl\| [ -\nabla d_{\tau}(\lambda) ]_+ \bigr\|_2.
\end{gather}
% where $\pi^*$ is the solution to the problem \eqref{constrMDP}, and $|\mathcal{A}|$ is the cardinality of the set of actions denoted by $\mathcal{A}$.
\end{lemma}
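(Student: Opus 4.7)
The plan is to reduce both claims to the gradient formula $\nabla d_\tau(\lambda) = V^{\pi_{\tau,\lambda}^*}(\rho) - c$ already established in Proposition \ref{dtau_gradient}, together with weak duality. The second equality is essentially immediate once the gradient formula is in hand, so the real content is showing the inequality $V_0^*(\rho) \le d_\tau(\lambda)$ for every $\lambda \in \Lambda$; this amounts to weak duality for the entropy-regularized Lagrangian.

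For the second identity, I would simply observe that by Proposition \ref{dtau_gradient},
\[
-\nabla d_\tau(\lambda) \;=\; c - V^{\pi_{\tau,\lambda}^*}(\rho),
\]
so taking componentwise positive parts and then $\ell_2$-norms on both sides yields the claim without any further work.

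For the first inequality, I would rewrite it as
\[
V_0^*(\rho) \;\le\; V_0^{\pi_{\tau,\lambda}^*}(\rho) + \bigl\langle \lambda, V^{\pi_{\tau,\lambda}^*}(\rho) - c \bigr\rangle + \tau \mathcal{H}\bigl(\pi_{\tau,\lambda}^*\bigr) \;=\; \mathcal{L}_\tau\bigl(\pi_{\tau,\lambda}^*,\lambda\bigr) \;=\; d_\tau(\lambda),
\]
where the last equality uses the definition of $\pi_{\tau,\lambda}^*$ as a maximizer in the inner problem. To prove $V_0^*(\rho) \le d_\tau(\lambda)$, I would plug the optimal primal policy $\pi^*$ into $\mathcal{L}_\tau(\cdot,\lambda)$: since $\pi^*$ is feasible for \eqref{optprob} we have $V^{\pi^*}(\rho) - c \ge 0$ componentwise, and since $\lambda \in \Lambda \subseteq \mathbb{R}_+^m$ it follows that $\langle \lambda, V^{\pi^*}(\rho) - c\rangle \ge 0$. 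The entropy term $\tau \mathcal{H}(\pi^*)$ is also nonnegative because $\gamma \in (0,1)$ and $-\log \pi^*(a\mid s) \ge 0$. Therefore
\[
V_0^*(\rho) = V_0^{\pi^*}(\rho) \;\le\; \mathcal{L}_\tau(\pi^*,\lambda) \;\le\; \max_{\pi \in \Pi} \mathcal{L}_\tau(\pi,\lambda) \;=\; d_\tau(\lambda),
\]
and substituting $\nabla d_\tau(\lambda) = V^{\pi_{\tau,\lambda}^*}(\rho) - c$ into the rearranged form of $V_0^*(\rho) \le d_\tau(\lambda)$ produces exactly the stated bound.

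There is no real obstacle in this lemma; the only subtlety is to note that Proposition \ref{dtau_gradient} requires Assumption \ref{assumption_tau_optimal_unique} so that $\pi_{\tau,\lambda}^*$ is well-defined and $d_\tau$ is differentiable at $\lambda$, but this is precisely the standing assumption here. No smoothness, ergodicity, or Slater-type assumption is needed for this particular step.
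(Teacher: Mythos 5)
Your proof is correct and follows essentially the same route as the paper: plug the feasible optimal policy $\pi^*$ into $\L_{\tau}(\cdot,\lambda)$, use $\mathcal{H}(\pi^*)\geq 0$, $V^{\pi^*}(\rho)\geq c$ and $\lambda\in\R^m_+$ to get $V_0^*(\rho)\leq d_{\tau}(\lambda)=\L_{\tau}(\pi_{\tau,\lambda}^*,\lambda)$, then substitute $\nabla d_{\tau}(\lambda)=V^{\pi_{\tau,\lambda}^*}(\rho)-c$ from Proposition \ref{dtau_gradient} for both claims. Your remark that only Assumption \ref{assumption_tau_optimal_unique} is needed here matches the paper as well.
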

\begin{proof}
\begin{multline*}
    V_{0}^{\pi_{\tau,\lambda}^*}(\rho)+\bigl\langle \lambda, V^{\pi_{\tau,\lambda}^*}(\rho)-c\bigr\rangle + \tau \mathcal{H}(\pi_{\tau,\lambda}^*) = \max_{\pi \in \Pi} \L_{\tau}(\pi, \lambda) \\
    \geq \L_{\tau}(\pi^*, \lambda) = V_{0}^{*}(\rho) + \bigl\langle \lambda, V^{\pi^*}(\rho)-c\bigr\rangle + \tau \mathcal{H}(\pi^*).
\end{multline*}
The inequalities $\mathcal{H}(\pi^*) \geq 0$, $V^{\pi^*}(\rho) \geq c$, $\lambda \in \R_+^m$ imply $\L_{\tau}(\pi^*, \lambda) \geq V_{0}^{*}(\rho)$, hence
$$
    V_{0}^{*}(\rho) - V_{0}^{\pi_{\tau,\lambda}^*}(\rho) \leq \bigl\langle \lambda, V^{\pi_{\tau,\lambda}^*}(\rho)-c\bigr\rangle + \tau \mathcal{H}(\pi_{\tau,\lambda}^*).
$$
Using the result of Proposition \ref{dtau_gradient}, we get
$$
    \nabla d_{\tau}(\lambda) = V^{\pi_{\tau,\lambda}^*}(\rho) - c,
$$
which finishes the proof.
\end{proof}

% Let us introduce the following notation: for a vector $a \in \R^m$, we define $\relu{a}$ and $\negrelu{a}$ as vectors with components
% \begin{equation*}
%     \bigl( \relu{a} \bigr)_i := \left\{\begin{array}{cc}
%         a_i & \text{if } a_i \geq 0, \\
%         0 & \text{otherwise},
%     \end{array}\right.\quad
%     \bigl( \negrelu{a} \bigr)_i := \left\{\begin{array}{cc}
%         a_i & \text{if } a_i < 0, \\
%         0 & \text{otherwise}.
%     \end{array}\right.\quad
% \end{equation*}

% The following lemma shows that we can bound the gradient-related quantities from lemma \ref{Lemma1}
% for a dual variable, simply via its optimality,
% given that $d_\tau(\lambda)$ function is smooth.

\begin{lemma}\label{Lemma2}
Suppose Assumption \ref{assumption_tau_optimal_unique} holds, and let $\lambda \in \Lambda$, %. If the dual function $d_{\tau}$ is $L$-smooth on $\R^m_+$,
then
\begin{gather}
    \label{grad_bound}
    \bigl\| \negrelu{\nabla d_{\tau}(\lambda)} \bigr\|_2^2 \leq 2 L_d (d_{\tau}(\lambda) - d_{\tau}^*),\\
    \label{dot_prod_bound}
    \bigl\langle \lambda, \nabla d_{\tau}(\lambda) \bigr\rangle \leq B_{\lambda} \sqrt{2m L_d (d_{\tau}(\lambda) - d_{\tau}^*)} + 2 (d_{\tau}(\lambda) - d_{\tau}^*),
\end{gather}
% \begin{equation}
%     \label{grad_bound}
%     \bigl\| \negrelu{\nabla d_{\tau}(\lambda)} \bigr\|_2^2 \leq 2L (d_{\tau}(\lambda) - d_{\tau}^*),
% \end{equation}
where $d_{\tau}^*$ is the optimal value in the dual problem \eqref{dual}, $L_d$ is the smoothness constant of $d_{\tau}$.
% If in addition $\| \lambda \|_2 \leq B$ for some $B>0$, then
% \begin{equation}
%     \label{dot_prod_bound}
%     \bigl\langle \lambda, \nabla d_{\tau}(\lambda) \bigr\rangle \leq B \sqrt{2L (d_{\tau}(\lambda) - d_{\tau}^*)} + 2 (d_{\tau}(\lambda) - d_{\tau}^*).
% \end{equation}
\end{lemma}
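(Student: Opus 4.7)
The plan is to establish the two inequalities via the descent lemma (Proposition \ref{prop_smooth}) evaluated at two carefully chosen feasible points in $\R^m_+$.

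For \eqref{grad_bound}, I would exploit the feasible descent direction $u := [-\nabla d_\tau(\lambda)]_+$, which is componentwise nonnegative, so $\lambda + u/L_d \in \R^m_+$. The identity $\langle \nabla d_\tau(\lambda), u\rangle = -\|u\|_2^2$ holds because the positive and negative parts of a vector have disjoint supports. The descent lemma then gives
\[
d_\tau^* \le d_\tau(\lambda + u/L_d) \le d_\tau(\lambda) - \frac{\|u\|_2^2}{L_d} + \frac{\|u\|_2^2}{2L_d} = d_\tau(\lambda) - \frac{\|u\|_2^2}{2L_d},
\]
and rearranging yields \eqref{grad_bound}.

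For \eqref{dot_prod_bound}, let $g := \nabla d_\tau(\lambda)$ and $\Delta := d_\tau(\lambda) - d_\tau^*$, and use the gradient-projection point $\mu := [\lambda - g/L_d]_+ \in \R^m_+$. I would split indices into $A := \{i : g_i \le L_d\lambda_i\}$ (where $\mu_i = \lambda_i - g_i/L_d$) and $B := A^c$ (where necessarily $g_i > L_d\lambda_i \ge 0$ and $\mu_i = 0$). Substituting $(\mu - \lambda)_i$ coordinatewise into the smoothness inequality and using $d_\tau^* \le d_\tau(\mu)$, direct algebra produces
\[
\Delta \ge \frac{1}{2L_d}\sum_{i \in A} g_i^2 + \sum_{i \in B}\Bigl(\lambda_i g_i - \tfrac{L_d}{2}\lambda_i^2\Bigr).
\]
Because $g_i > L_d\lambda_i$ on $B$, the $B$-summand exceeds $\tfrac{1}{2}\lambda_i g_i$ pointwise, so both contributions on the right are nonnegative, yielding the two separate estimates $\sum_{i \in A}g_i^2 \le 2L_d\Delta$ and $\sum_{i \in B}\lambda_i g_i \le 2\Delta$. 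Finally, I would decompose $\langle \lambda, g\rangle = \sum_{i \in A}\lambda_i g_i + \sum_{i \in B}\lambda_i g_i$: the $B$-part is at most $2\Delta$ by the above, and for the $A$-part I would write $\sum_{i \in A}\lambda_i g_i \le \|\lambda\|_\infty \sum_{i \in A}[g_i]_+ \le B_\lambda\sqrt{m}\,\|g|_A\|_2 \le B_\lambda\sqrt{2mL_d\Delta}$, using $\|\lambda\|_\infty \le \|\lambda\|_1 \le B_\lambda$ together with $\|\cdot\|_1 \le \sqrt{m}\|\cdot\|_2$ on $A$.

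The main technical wrinkle is that Proposition \ref{prop_smooth} is stated on $\Lambda$, whereas the auxiliary points $\lambda + u/L_d$ and $\mu$ need only live in $\R^m_+$ and may escape $\|\cdot\|_1 \le B_\lambda$. This is benign: the Lipschitz estimate of Lemma \ref{regoptpolicy_smoothness}, on which smoothness of $d_\tau$ rests, extends verbatim to all of $\R^m_+$, so the descent inequality is valid along the segments $[\lambda, \lambda + u/L_d]$ and $[\lambda, \mu]$ used above.
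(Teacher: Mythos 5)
Your proof is correct, but it reaches both bounds by a genuinely different route than the paper. Writing $g:=\nabla d_\tau(\lambda)$ and $\Delta:=d_\tau(\lambda)-d_\tau^*$: for \eqref{grad_bound} the paper tests the smoothness inequality at the projected point $\relu{\lambda-\frac{1}{L_d}g}$ and must then show an extra cross term $\psi\ge 0$ via a further split of the nonnegative-gradient coordinates, whereas your choice of the feasible point $\lambda+\frac{1}{L_d}\negrelu{g}$ makes the cross term exactly $-\bigl\|\negrelu{g}\bigr\|_2^2$ and yields the bound in one line. For \eqref{dot_prod_bound} the paper splits the coordinates into three sets ($g_i<0$; $g_i\ge 0$ with $\lambda_i\ge g_i/L_d$; $g_i\ge 0$ with $\lambda_i< g_i/L_d$) and applies smoothness twice, at $\lambda-\frac{1}{L_d}a_J$ and at $\lambda-\lambda_{J'}$, both of which were chosen so as to remain in $\Lambda$; you instead apply it once at the projected point $\relu{\lambda-g/L_d}$ with a two-way split and read off both partial estimates $\sum_{i\in A}g_i^2\le 2L_d\Delta$ and $\sum_{i\in B}\lambda_i g_i\le 2\Delta$ simultaneously, then finish with the same H\"older/Cauchy--Schwarz step; this single-test-point argument is essentially the one the paper itself uses later for the regularized dual (first lemma of Appendix \ref{dual_reg_append}), so your version unifies the two cases and is somewhat shorter. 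The trade-off is the point you already flagged: your auxiliary points can leave $\Lambda$, so you need $L_d$-smoothness of $d_\tau$ on (a bounded portion of) $\R^m_+$ rather than on $\Lambda$ only; your appeal to Lemma \ref{regoptpolicy_smoothness} is the right ingredient, though strictly speaking Assumptions \ref{assumption_tau_optimal_unique} and \ref{assumption_ergodicity} are stated only for $\lambda\in\Lambda$, so the extension implicitly requires them on the slightly larger set reached by these points --- a shared imprecision rather than a gap on your side, since the paper's own test point $\relu{\lambda-\frac{1}{L_d}g}$ in the first part has exactly the same feature, while its second-part test points were chosen precisely to avoid it. Your constants match the statement.
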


\begin{proof}
Denote $a:=\nabla d_{\tau}(\lambda)$ and
\begin{equation*}%\label{index_sets}
    N := \{ 1, \ldots, n \},\; I := \{ i \in N:\, a_i \geq 0 \},\; I' := N \setminus I.
\end{equation*}
Moreover, for any vector $b \in \R^m$, define $b_I$ to be the vector with components
$$
    ( b_I )_i := \left\{\begin{array}{cc}
        b_i & \text{if } i \in I, \\
        0 & \text{otherwise}.
    \end{array}\right.\quad
$$

Smoothness implies for any $\lambda' \in \Lambda$% \eg{was $\R_+^m$}
\begin{equation}
    \label{lip}
    % d_{\tau}(\lambda') \leq d_{\tau}(\lambda) + \langle a, \lambda' - \lambda \rangle + \frac{L}{2} \| \lambda' - \lambda \|_2^2.
    \langle a, \lambda - \lambda' \rangle - \frac{L_d}{2} \| \lambda - \lambda' \|_2^2 \leq d_{\tau}(\lambda) - d_{\tau}(\lambda').
\end{equation}
% \begin{equation}
%     \label{lip}
%     d_{\tau}(\lambda') \leq d_{\tau}(\lambda) + \bigl\langle \nabla d_{\tau}(\lambda), \lambda' - \lambda \bigr\rangle + \frac{L}{2} \| \lambda' - \lambda \|_2^2.
% \end{equation}
Pick $\lambda' := \relu{\lambda - \frac{1}{L_d} a}$, then it's sufficient to prove that
\begin{equation}\label{grad_norm_bound}
    \frac{1}{2L_d} \bigl\| \negrelu{a} \bigr\|_2^2 \leq \langle a, \lambda - \lambda' \rangle - \frac{L_d}{2} \| \lambda - \lambda' \|_2^2,
\end{equation}
and the first result of the lemma will follow from \eqref{lip}. Using the notation introduced above, we write
\begin{equation*}
    a = a_I + a_{I'}\, \text{ with }\, a_I, (-a_{I'}) \in \R^m_+,\qquad \lambda = \lambda_I + \lambda_{I'}.
\end{equation*}
The vector $\lambda'$ can now be expressed as
\begin{equation*}
    \lambda' = \relu{ \lambda_I + \lambda_{I'} - \frac{1}{L_d} a_I - \frac{1}{L_d} a_{I'} } = \relu{ \lambda_I - \frac{1}{L_d} a_I } + \lambda_{I'} - \frac{1}{L_d} a_{I'},
\end{equation*}
because $I \cap I' = \emptyset$ and $\lambda_{I'} - \frac{1}{L_d} a_{I'}\, \in\, \R^m_+$. The value $\lambda - \lambda'$ writes as
\begin{equation*}
    \lambda - \lambda' = \lambda_I - \relu{ \lambda_I - \frac{1}{L_d} a_I } + \frac{1}{L_d} a_{I'}.
\end{equation*}
The two terms in the right-hand side of \eqref{grad_norm_bound} are equal to
\begin{equation*}
    \langle a, \lambda - \lambda' \rangle = \left\langle a_I, \lambda_I - \relu{ \lambda_I - \frac{1}{L_d} a_I } \right\rangle + \frac{1}{L_d} \| a_{I'} \|_2^2,
\end{equation*}
and
\begin{equation*}
    \frac{L_d}{2} \| \lambda - \lambda' \|^2_2 = \frac{L_d}{2} \bigl\| \lambda_I - \relu{ \lambda_I - \frac{1}{L_d} a_I } \bigr\|_2^2 + \frac{1}{2L_d} \| a_{I'} \|_2^2.
\end{equation*}
Thus, the right-hand side of \eqref{grad_norm_bound} writes as
\begin{equation*}
    \frac{1}{2L_d} \| a_{I'} \|_2^2 + \underbrace{\left\langle a_I, \lambda_I - \relu{ \lambda_I - \frac{1}{L_d} a_I } \right\rangle - \frac{L_d}{2} \bigl\| \lambda_I - \relu{ \lambda_I - \frac{1}{L_d} a_I } \bigr\|_2^2}_{=: \psi}.
\end{equation*}
To observe that $\psi \geq 0$, consider the sets
\begin{equation*}
    J := \left\{ i \in I:\, \lambda_i \geq \smallfrac{1}{L_d} a_i \right\},\; J' := I \setminus J,
\end{equation*}
then
\begin{equation*}
    \relu{ \lambda_I - \frac{1}{L_d} a_I } = \lambda_J - \frac{1}{L_d} a_J\quad \text{and}\quad \lambda_I - \relu{ \lambda_I - \frac{1}{L_d} a_I } = \lambda_{J'} + \frac{1}{L_d} a_J.
\end{equation*}
Thus,
\begin{equation*}
    \psi = \left\langle a_I - \smallfrac{L_d}{2}\left( \lambda_{J'} + \smallfrac{1}{L_d} a_J \right),\, \lambda_{J'} + \smallfrac{1}{L_d} a_J \right\rangle = \frac{1}{2} \left\langle a_I + L_d \left( \smallfrac{1}{L_d} a_{J'} - \lambda_{J'} \right),\, \lambda_{J'} + \smallfrac{1}{L_d} a_J \right\rangle,
\end{equation*}
which is a nonnegative value as a scalar product of vectors with nonnegative components. To sum up,
\begin{equation*}
    \langle a, \lambda - \lambda' \rangle - \frac{L_d}{2} \| \lambda' - \lambda \|_2^2 = \frac{1}{2L_d} \| a_{I'} \|_2^2 + \psi \geq \frac{1}{2L_d} \| a_{I'} \|_2^2,
\end{equation*}
and the first result of the lemma follows from \eqref{lip} since $d_{\tau}(\lambda') \geq d_{\tau}^*$.

The left-hand side of the inequality \eqref{dot_prod_bound} equals
\begin{equation}\label{scal_prod}
    \langle \lambda, a \rangle = \langle \lambda_J, a_J \rangle + \langle \lambda_{J'}, a_{J'} \rangle + \langle \lambda_{I'}, a_{I'} \rangle.
\end{equation}
The last term is non-positive. Let us bound the first two. Put $\lambda':=\lambda - \frac{1}{L_d} a_J$ and observe that $\lambda' \in \R_+^m$ due to the definition of $J$. Moreover, $\lambda' \in \Lambda$ since $\|\lambda'\|_1 \leq \|\lambda\|_1$. The right-hand side of \eqref{lip} writes as
\begin{equation}\label{norm_J}
    \langle a, \lambda - \lambda' \rangle - \frac{L_d}{2} \| \lambda - \lambda' \|^2_2 = \bigl\langle a, \smallfrac{1}{L_d} a_J \bigr\rangle - \frac{1}{2L_d} \| a_J \|^2_2 = \frac{1}{2L_d} \| a_J \|^2_2.
\end{equation}
Relations \eqref{lip} and \eqref{norm_J} yield
\begin{equation}
    \label{norm_J_bound}
    \| a_J \|_2 \leq \sqrt{2L_d (d_{\tau}(\lambda) - d_{\tau}(\lambda'))} \leq \sqrt{2L_d (d_{\tau}(\lambda) - d_{\tau}^*)}.
\end{equation}
Cauchy–Bunyakovsky–Schwarz inequality, condition $\| \lambda \|_2 \leq \sqrt{m} \| \lambda \|_1 \leq \sqrt{m} B_{\lambda}$ and bound \eqref{norm_J_bound} imply
\begin{equation}\label{first_term_bound}
    \langle \lambda_J, a_J \rangle \leq \| \lambda_J \|_2 \cdot \| a_J \|_2 \leq B_{\lambda} \sqrt{2m L_d (d_{\tau}(\lambda) - d_{\tau}^*)}.
\end{equation}
The bound for the second term in the right-hand side of \eqref{scal_prod} can be obtained by putting $\lambda':=\lambda - \lambda_{J'}$. Indeed,
\begin{multline*}
    \langle a, \lambda - \lambda' \rangle - \frac{L_d}{2} \| \lambda - \lambda' \|^2_2 = \langle a, \lambda_{J'} \rangle - \frac{L_d}{2} \| \lambda_{J'} \|^2_2 = \frac{1}{2} \langle a_{J'}, \lambda_{J'} \rangle + \frac{L_d}{2} \left( \frac{1}{L_d} \langle a_{J'}, \lambda_{J'} \rangle - \| \lambda_{J'} \|^2_2 \right) \\
    = \frac{1}{2} \langle a_{J'}, \lambda_{J'} \rangle + \frac{L_d}{2} \bigl\langle \smallfrac{1}{L_d} a_{J'} - \lambda_{J'}, \lambda_{J'} \bigr\rangle \geq \frac{1}{2} \langle a_{J'}, \lambda_{J'} \rangle,
\end{multline*}
where the last inequality is due to the definition of $J'$. Thus,
\begin{equation}
    \label{second_term_bound}
    \langle a_{J'}, \lambda_{J'} \rangle \leq 2 (d_{\tau}(\lambda) - d_{\tau}^*),
\end{equation}
and the second result of the Lemma follows from \eqref{scal_prod}, \eqref{first_term_bound} and \eqref{second_term_bound}.

% To get the second inequality \eqref{dot_prod_bound}, use the fact that $\lambda \in \R^m_+$ and
% $$
% \bigl\langle \lambda, a \bigr\rangle \leq \bigl\langle \lambda_I, a_I \bigr\rangle \leq \| \lambda \|_2 \cdot \| a_I \|_2.
% $$
\end{proof}

% \begin{proposition}
% Let $\epsilon_{opt},\epsilon_{c}>0$. If $\tau$ is set to $\tau = \frac{\epsilon_{opt}(1-\gamma)}{2\log |\mathcal{A}|}$ and
% a point $\tilde{\lambda}$ satisfies
% \begin{equation}
%     d_{\tau}(\tilde{\lambda}) - d_{\tau}^* \leq \epsilon := \min\left(
%     \frac{1}{2}(\sqrt{2\mathcal{R}^2L_d^{\tau} + 4\epsilon_{opt}}
%     - \mathcal{R}\sqrt{2L_d^{\tau}}),
%     \frac{\epsilon_c}{2L_d^{\tau}}
%     \right),
% \end{equation}
% then
% \begin{align}
%     V_{0}^{\pi^*}(\rho) - 
%     V_{0}^{\pi_{\tau,\tilde{\lambda}}^{*}}(\rho) &\leq \epsilon_{opt} \\
%     \bigl\| [ c - V^{\pi_{\tau,\tilde{\lambda}}^{*}}
%     (\rho) ]_+ \bigr\|_2 &\leq \epsilon_{c}
% \end{align}
% \end{proposition}

\section{Proof of Theorem \ref{convergence_theorem}}\label{main_proof}
% For the shortness of notation, we will write
In this section, we will write for the shortness of notation $V_0^\pi := V_0^\pi(\rho),V^\pi := V^\pi(\rho)$, and so on, still taking the dependence on $\rho$ into account. Recall that $\pi_T$ is the output of the Algorithm \ref{alg:vaidya} after $T$ iterations, $V_{0}^{*}$ and $d_{\tau}^*$ are optimal values in the primal \eqref{optprob} and dual \eqref{dual_probl} problems, respectively, and the following notation is used
\begin{align}
    V^{\pi}_{\tau, \lambda}
    &:= V_0^\pi 
    + \sum_{i=1}^m \lambda_i V_i^{\pi}
    + \tau\Hc(\pi), \nonumber \\
    V^{*}_{\tau, \lambda}
    &:= \max_{\pi \in \Pi} V^{\pi}_{\tau, \lambda}, \nonumber \\
    \pi^*_{\tau, \lambda} &:= \argmax_{\pi \in \Pi} \L_{\tau}(\pi, \lambda) \equiv \argmax_{\pi \in \Pi} V^{\pi}_{\tau, \lambda}.\label{NPG_probb}
\end{align}
% Moreover, let us denote
% \begin{equation*}
%     V^{*}_{\tau, \lambda}
%     := V^{\pi^*_{\tau, \lambda}}_{\tau, \lambda}.
% \end{equation*}
% $\pi_T$ is the output of the Algorithm \ref{alg:vaidya} after $T$ iterations, $\pi^*_{\tau, \lambda_T}$ is defined by
% \begin{equation}\label{lambda_policy}
%     \pi^*_{\tau, \lambda_T} := \argmax_{\pi \in \Pi} \L_{\tau}(\pi, \lambda),
% \end{equation}
% $V_{0}^{*}$ and $d_{\tau}^*$ are optimal values in the primal \eqref{optprob} and dual \eqref{dual_probl} problems, respectively.
Plan of the proof is as follows.
\begin{enumerate}
    \item We describe how to apply Theorem \ref{thm:vaidya} about convergence of Vaidya's method with $\delta$-subgradient (Algorithm \ref{alg:og_vaidya}) to our proposed Algorithm \ref{alg:vaidya} for the dual problem.\label{proof_pt1}
    \item We express the values $V_{0}^{*}-V_{0}^{\pi^*_{\tau, \lambda_T}}$ and $\bigl\| [ c - V^{\pi^*_{\tau, \lambda_T}} ]_+ \bigr\|_2$ through the optimality gap of the dual problem $d_{\tau}(\lambda_T) - d_{\tau}^*$. That is, we estimate the optimality gap and constraint violation as if the NPG could solve the problem \eqref{NPG_probb} exactly for the last iterate $\lambda_T$.\label{proof_pt2}
    \item We estimate the values $V_{0}^{*}-V_{0}^{\pi_T}$ and $\bigl\| [ c - V^{\pi_T} ]_+ \bigr\|_2$ using the results from the previous step and the convergence rate of Vaidya's algorithm.\label{proof_pt3}
\end{enumerate}

To begin part \ref{proof_pt1} of the proof, observe that the proposed Algorithm \ref{alg:vaidya} is a special case of Vaidya's method with $\delta$-subgradient (Algorithm \ref{alg:og_vaidya}) if the value $-\widehat{\nabla}_{t} := V^{\pi_t} - c$ from line \ref{line_antigrad} of Algorithm \ref{alg:vaidya} is a $\tilde{\delta}$-subgradient (Definition \ref{def_delta_subgrad}) for some $\tilde{\delta}>0$ which depends on the parameter $\delta$ of NPG. This is the case due to the lemma from page 132 of \cite{polyak1983intro} which we give below keeping notation consistent with the rest of the paper.
\begin{lemma}%\label{lem:delta_min_max}
    Let
    $$
        d_\tau(\lambda) := \max_{\pi \in \Pi} \L_{\tau}(\pi, \lambda),
    $$
    where $\Pi$ is a compact set, $\L_{\tau}(\pi, \lambda)$ is continuous in $\pi, \lambda$ and convex in $\lambda$. 
    Let $\tilde{\pi}$ satisfy for a fixed $\lambda$ the inequality $d(\lambda) - \L_{\tau}(\pi, \lambda) \leq \tilde{\delta}$, then $\partial_{\lambda} \L_{\tau}(\hat{\pi}, \lambda) \in \partial_{\tilde{\delta}} d_{\tau}(\lambda)$.
    % Let $\lambda \in \R^m_+$ be a fixed vector and let $\hat{\pi} \in \Pi$ satisfy
    % $$
    % \max_{\pi \in \Pi} \L_{\tau}(\pi, \lambda) - \L_{\tau}(\hat{\pi}, \lambda) \leq \delta\quad \text{for some } \delta >0,
    % $$
    % then $\nabla_{\lambda} \L_{\tau}(\hat{\pi}, \lambda) \in \partial_{\delta} d_{\tau}(\lambda)$.
\end{lemma}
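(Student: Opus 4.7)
The plan is to verify the defining inequality of a $\tilde{\delta}$-subgradient (Definition~\ref{def_delta_subgrad}) directly, by stringing together three one-line ingredients: the ordinary subgradient inequality for the convex function $\L_\tau(\tilde{\pi}, \cdot)$ at the fixed point $\lambda$, the pointwise domination $d_\tau(\lambda') \geq \L_\tau(\tilde{\pi}, \lambda')$ coming from the definition of $d_\tau$ as a pointwise maximum, and the hypothesis that $\tilde{\pi}$ is a $\tilde{\delta}$-approximate maximizer at $\lambda$. No fact beyond what is already stated in the lemma's hypotheses is needed; in particular, compactness of $\Pi$ together with continuity of $\L_\tau$ just serve to ensure that $d_\tau(\lambda)$ is attained and finite, and convexity of $\L_\tau(\pi, \cdot)$ in $\lambda$ (actually linearity in the CMDP application) guarantees both that $d_\tau$ is itself convex and that the subdifferential $\partial_\lambda \L_\tau(\tilde{\pi}, \lambda)$ is a well-defined nonempty convex set.

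Concretely, fix any $g \in \partial_\lambda \L_\tau(\tilde{\pi}, \lambda)$. The convex subgradient inequality applied to the function $\lambda' \mapsto \L_\tau(\tilde{\pi}, \lambda')$ reads
\begin{equation*}
    \L_\tau(\tilde{\pi}, \lambda') \;\geq\; \L_\tau(\tilde{\pi}, \lambda) + \langle g,\, \lambda' - \lambda \rangle
    \qquad \forall \lambda' \in \mathrm{dom}\, d_\tau.
\end{equation*}
Since $d_\tau(\lambda') = \max_{\pi \in \Pi} \L_\tau(\pi, \lambda') \geq \L_\tau(\tilde{\pi}, \lambda')$, this gives
\begin{equation*}
    d_\tau(\lambda') \;\geq\; \L_\tau(\tilde{\pi}, \lambda) + \langle g,\, \lambda' - \lambda \rangle.
\end{equation*}
Finally, the hypothesis $d_\tau(\lambda) - \L_\tau(\tilde{\pi}, \lambda) \leq \tilde{\delta}$, i.e.\ $\L_\tau(\tilde{\pi}, \lambda) \geq d_\tau(\lambda) - \tilde{\delta}$, substituted into the previous display yields
\begin{equation*}
    d_\tau(\lambda') \;\geq\; d_\tau(\lambda) + \langle g,\, \lambda' - \lambda \rangle - \tilde{\delta},
\end{equation*}
which is exactly the condition $g \in \partial_{\tilde{\delta}} d_\tau(\lambda)$ from Definition~\ref{def_delta_subgrad}.

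There is no real obstacle here: the argument is a three-inequality chain, and the only point that requires any care is recognizing which quantity plays the role of the inexactness slack. In the CMDP instantiation used in Algorithm~\ref{alg:vaidya}, $\L_\tau(\pi, \lambda)$ is affine in $\lambda$, so $\partial_\lambda \L_\tau(\tilde{\pi}, \lambda)$ collapses to the singleton $\{V^{\tilde{\pi}}(\rho) - c\}$, and the lemma tells us precisely that the vector $-\widehat{\nabla}_t = V^{\pi_t}(\rho) - c$ computed on line~\ref{line_antigrad} is a $\tilde{\delta}$-subgradient of $d_\tau$ at $\lambda_t$, where $\tilde{\delta}$ is controlled by the NPG tolerance $\delta$ through a bound on $d_\tau(\lambda_t) - \L_\tau(\pi_t, \lambda_t)$ (which in turn follows from Theorem~\ref{npg_convergence} together with the smoothness of $\L_\tau$ in $\pi$). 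This is the step that feeds directly into part~\ref{proof_pt1} of the plan for Theorem~\ref{convergence_theorem}.
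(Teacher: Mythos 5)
Your proof is correct: the three-inequality chain (subgradient inequality for $\L_\tau(\tilde{\pi},\cdot)$, pointwise domination $d_\tau \geq \L_\tau(\tilde{\pi},\cdot)$, and the $\tilde{\delta}$-approximate optimality of $\tilde{\pi}$ at $\lambda$) is exactly the standard argument behind this lemma, which the paper itself does not prove but simply quotes with a citation to Polyak's book; your write-up supplies that missing argument faithfully and verifies the $\tilde{\delta}$-subgradient definition directly. The only nitpick is in your closing aside: in the paper's application the bound $d_\tau(\lambda_t)-\L_\tau(\pi_t,\lambda_t)\leq 6\tau\gamma\delta$ comes straight from the value-function guarantee of the NPG convergence theorem, so no separate smoothness of $\L_\tau$ in $\pi$ is invoked there, but this does not affect the proof of the lemma itself.
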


The given lemma shows that $\tilde{\delta}$-optimal policy (in terms
of optimality gap of regularized Lagrangian $\L_{\tau}$) gives us a $\tilde{\delta}$-subgradient for the dual function $d_{\tau}(\lambda)$. According to Theorem \ref{npg_convergence}, the call $\mathrm{NPG}\left(r_0 + \langle \lambda_t, r \rangle, \tau, \delta \right)$ in line \ref{line_NPG} of Algorithm \ref{alg:vaidya} ensures accuracy $\tilde{\delta} = 6 \tau \gamma \delta$. \eg{Additionally, note that the vector $\widehat{\nabla}_{t}$ from line \ref{separ_oracle_vmdp} of the Algorithm \ref{alg:vaidya} satisfies the inequality from line \ref{line_separ_oracle_vaidya} of the Algorithm \ref{alg:og_vaidya}. Indeed, the value $\widehat{\nabla}_{t}^{\top} \lambda_t$ is negative as the sum of negative components of $\lambda_t$, while $\widehat{\nabla}_{t}^{\top} \lambda$ is nonnegative for all $\lambda \in \Lambda$ as a scalar product of two vectors with nonnegative elements.}

Before we can apply Theorem \ref{thm:vaidya} to the proposed algorithm, we need to replace the dual problem $\min_{\lambda \in \R^m_+} d_{\tau}(\lambda)$ with the equivalent one, but on a compact set. This is possible due to Lemma \ref{lem_bounded_dual} which states that the solution $\lambda_{\tau}^*$ of the dual problem satisfies
\begin{equation}\label{dual_bound2}
    \left\| \lambda_{\tau}^* \right\|_{1} \leq B_{\lambda}:=\frac{r_{0, \max } + \log |\Ac|}{(1-\gamma) \xi}.
\end{equation}
Thus, the equivalent formulation is
\begin{equation}\label{compact_dual_problem}
    \min_{\lambda \in \Lambda} d_{\tau}(\lambda)\quad \text{with } \Lambda := \{ \lambda \in \R^m_+\; |\; \| \lambda \|_1 \leq B_{\lambda} \}.
\end{equation}
The only thing left to do is to find the values $\mathcal{R}, \mathcal{R}_{in}$ and $B_d$ such that $\mathcal{B}_{\mathcal{R}_{in}} \subseteq \Lambda \subseteq \mathcal{B}_{\mathcal{R}}$ and $|d_{\tau}(\lambda) - d_{\tau}(\lambda')| \leq B_d\ \forall \lambda, \lambda' \in \Lambda$. Such values are given by Lemmas \ref{ballsy_lemma} and \ref{lem_dual_func_bound}:
\begin{align}
    \mathcal{R} &:= B_{\lambda}, \\
    \mathcal{R}_{in} &:= \frac{B_{\lambda}}{m + \sqrt{m}}, \\
    B_d &:= \frac{r_{0,max} + \sqrt{m} B_{\lambda} R_{max} + \tau \log |\Ac|}{1-\gamma} \stackrel{\eqref{dual_bound2}}{\leq} \left(\xi + \frac{\sqrt{m} R_{max}}{1-\gamma} \right) B_{\lambda}.
\end{align}
We can put
\begin{equation*}
    A_0 := \left[\begin{array}{c}
        -I_m \\
        1
    \end{array}\right],\quad b_0 := \left[\begin{array}{c}
        B_{\lambda} \mathbf{1}_m \\
        m B_{\lambda}
    \end{array}\right],
\end{equation*}
which will correspond to the initial simplex
\begin{equation*}
    P(A_0, b_0)=\bigl\{\lambda \in \mathbb{R}^m: \lambda_{j} \geqslant-\mathcal{R}, j=1,\ldots, m,\ \sum_{j=1}^{m} \lambda_{j} \leqslant m \mathcal{R}\bigr\} \supseteq \mathcal{B}_{\mathcal{R}}.
\end{equation*}
Thus, Theorem \ref{thm:vaidya} applied to the proposed algorithm yields the following convergence estimate for the dual problem \eqref{compact_dual_problem}:
\begin{align}\label{before_eps}
    d_{\tau}(\lambda_T) - d_{\tau}^* &\leq \frac{m^2 (1 + \sqrt{m}) B_{\lambda}}{\zeta} \left(\xi + \frac{\sqrt{m} R_{max}}{1-\gamma} \right) \exp \left( \frac{\log \pmb{\pi} - \zeta T}{2m} \right) + 6 \tau \gamma \delta\\
    &=:\epsilon + 6 \tau \gamma \delta,
\end{align}
where $\epsilon$ denotes the first term of the estimate \eqref{before_eps}.

Part \ref{proof_pt2} of the proof goes as follows. First, we use Proposition \ref{prop_smooth} to state that $d_{\tau}(\lambda)$ is $L_{d}$-smooth with
\begin{equation}\label{smoothnes_const_44}
    L_{d}=\frac{R_{\max }^{2} L_{\beta}}{(1-\gamma)^{2} \tau},\quad \text{where }\, L_{\beta}:=\left\lceil\log _{\beta}\left(C_{M}^{-1}\right)\right\rceil+(1-\beta)^{-1}+1.
\end{equation}
Second, we refer to Lemmas \ref{Lemma1} and \ref{Lemma2} which provide the following bounds:
\begin{align}
    V_{0}^{*} - V_{0}^{\pi^*_{\tau, \lambda_T}} &\leq \bigl\langle \lambda_T, \nabla d_{\tau}(\lambda_T) \bigr\rangle + \tau \mathcal{H}(\pi^*_{\tau, \lambda_T})\\
    &\leq B_{\lambda} \sqrt{2 m L_d (\epsilon + 6 \tau \gamma \delta)} + 2 (\epsilon + 6 \tau \gamma \delta) + \tau \mathcal{H}(\pi^*_{\tau, \lambda_T}),\label{ideal_NPG} \\
    \bigl\| [ c - V^{\pi^*_{\tau, \lambda_T}} ]_+ \bigr\|_2 &= \bigl\| [ -\nabla d_{\tau}(\lambda_T) ]_+ \bigr\|_2 \leq 2 L_d (\epsilon + 6 \tau \gamma \delta).\label{constr_ideal}
\end{align}

Let us begin part \ref{proof_pt3} of the proof.
% In this part, we will omit dependence on $\rho$ for brevity and denote $V_{\tau, \lambda_T}^*:=\max_{\pi \in \Pi} \L_{\tau}(\pi, \lambda)$, or using the notation from \eqref{}, $V_{\tau, \lambda_T}^*=\max_{\pi \in \Pi} \L_{\tau}(\pi, \lambda)$
First, we bound the value $V_{0}^{\pi^*_{\tau, \lambda_T}} - V_{0}^{\pi_T}$. Recall that $\pi_T := \mathrm{NPG}\left(r_0 + \langle \lambda_T, r \rangle, \tau, \delta \right)$. Therefore, according to Theorem \ref{npg_convergence}, $\pi_T$ satisfies
\begin{equation}\label{NPG_44}
    V_{\tau, \lambda_T}^{*} - V_{\tau, \lambda_T}^{\pi_T} \leq 6 \tau \gamma \delta.
\end{equation}
Furthermore,
\begin{align}
    V_{0}^{\pi^*_{\tau, \lambda_T}} - V_{0}^{\pi_T}
    &= \underbrace{V_{0}^{\pi^*_{\tau, \lambda_T}} + \bigl\langle \lambda_T, V^{\pi^*_{\tau, \lambda_T}} \bigr\rangle + \tau \mathcal{H}(\pi^*_{\tau, \lambda_T})}_{V_{\tau, \lambda_T}^{*}}
    - \underbrace{ \left(V_{0}^{\pi_T} + \bigl\langle \lambda_T, V^{\pi_T} \bigr\rangle + \tau \mathcal{H}(\pi_T) \right)}_{V_{\tau, \lambda_T}^{\pi_T}}  \nonumber \\
    &+ \bigl\langle \lambda_T, V^{\pi_T} - V^{\pi^*_{\tau, \lambda_T}} \bigr\rangle + \tau \left( \mathcal{H}(\pi_T) - \mathcal{H}(\pi^*_{\tau, \lambda_T}) \right)  \nonumber \\
    &\stackrel{\eqref{NPG_44}}{\leq} 6 \tau \gamma \delta + \bigl\langle \lambda_T, V^{\pi_T} - V^{\pi^*_{\tau, \lambda_T}} \bigr\rangle + \tau \left( \mathcal{H}(\pi_T) - \mathcal{H}(\pi^*_{\tau, \lambda_T}) \right). \label{NPG_errr}
\end{align}
% $$
%     V_{0}^{\pi^*_{\tau, \lambda_T}} - V_{0}^{\pi_T}
%     = \underbrace{V_{0}^{\pi^*_{\tau, \lambda_T}} + \bigl\langle \lambda_T, V^{\pi^*_{\tau, \lambda_T}} \bigr\rangle}_{V_{\lambda_T}^{\pi^*_{\tau, \lambda_T}}}
%     - \underbrace{V_{0}^{\pi_T} - \bigl\langle \lambda_T, V^{\pi_T} \bigr\rangle}_{V_{\lambda_T}^{\pi_T}}
%     + \bigl\langle \lambda_T, V^{\pi_T} - V^{\pi^*_{\tau, \lambda_T}} \bigr\rangle,
% $$
The scalar product is bounded by
$$
    \bigl\langle \lambda_T, V^{\pi_T} - V^{\pi^*_{\tau, \lambda_T}} \bigr\rangle \leq \|\lambda_T\|_2 \cdot \left\|V^{\pi_T} - V^{\pi^*_{\tau, \lambda_T}}\right\|_2 \leq \sqrt{m}B_{\lambda} \| \hat{\delta} \|_2.
$$
where $\hat{\delta} := V^{\pi_T}(\rho) - V^{\pi^*_{\tau, \lambda_T}}(\rho)$ is a value controlled by the NPG parameter $\delta$.
Now, the optimality gap can be estimated as follows:
\begin{align*}
    V_{0}^{*} - V_{0}^{\pi_T} =\, & V_{0}^{*} - V_{0}^{\pi^*_{\tau, \lambda_T}} + V_{0}^{\pi^*_{\tau, \lambda_T}} - V_{0}^{\pi_T} \\ \stackrel{\eqref{ideal_NPG}}{\leq}& B_{\lambda} \sqrt{2 m L_d (\epsilon + 6 \tau \gamma \delta)} + 2 (\epsilon + 6 \tau \gamma \delta) + \tau \mathcal{H}(\pi^*_{\tau, \lambda_T}) + V_{0}^{\pi^*_{\tau, \lambda_T}} - V_{0}^{\pi_T} \\
    \stackrel{\eqref{NPG_errr}}{\leq}& B_{\lambda} \sqrt{2 m L_d (\epsilon + 6 \tau \gamma \delta)} + 2 (\epsilon + 9 \tau \gamma \delta) + \tau \mathcal{H}(\pi_T) + \sqrt{m}B_{\lambda} \| \hat{\delta} \|_2 \\
    \stackrel{\eqref{smoothnes_const_44}}{=}& \frac{B_{\lambda} R_{max} \sqrt{2 m L_{\beta}}}{1-\gamma}\sqrt{\epsilon/\tau + 6 \gamma \delta} + 2 (\epsilon + 9 \tau \gamma \delta) + \tau \mathcal{H}(\pi_T) + \sqrt{m}B_{\lambda} \| \hat{\delta} \|_2.
\end{align*}
Note that $\mathcal{H}(\pi_T) \leq \frac{\log |\Ac|}{1-\gamma}$. It is reasonable to balance the terms that are proportional to $\sqrt{\epsilon/\tau}$ and $\tau$ by taking $\tau := \min(1, \sqrt[3]{\epsilon})$.

Also, we can bound $\norm{\hat{\delta}}_2$ via knowing that NPG
approximated $\pi_{\tau, \lambda_T}^*$ by $\pi_T$
with accuracy $\delta$, as stated in \ref{npg_convergence}:
\begin{equation}
    \norm{\pi_T - \pi_{\tau,\lambda_T}^*}_\infty < \delta.
\end{equation}

By Lemma 6 from \cite{lanarcpo}, which can be proved for any
two policies, we have:
\begin{equation}
    \norm{\nu_\rho^{\pi_T} - \nu_\rho^{\pi_{\tau,\lambda_T}^*}}_1
    \leq L_\beta \max\limits_{s \in \Sc}
    \norm{\pi_T(\cdot\vert s) - \pi_{\tau,\lambda_T}^*(\cdot\vert s)}_1
    \leq L_{\beta}\abs{\Ac}\delta.
\end{equation}

Now, we can rewrite $\|\hat{\delta}\|_2$:
\begin{align}
    \|\hat{\delta}\|_2^2 &= 
    \norm{V^{\pi_T}(\rho) - V^{\pi_{\tau,\lambda_T}^*}(\rho)}_2^2 = 
    \sum_{i=1}^m 
    \left(
    \frac{1}{1-\gamma}
    \langle r_i, \nu_\rho^{\pi_T} \rangle
    -
    \frac{1}{1-\gamma}
    \langle r_i, \nu_\rho^{\pi_{\tau,\lambda_T}^*} \rangle
    \right)^2
    \leq\\
    &\leq
    \frac{1}{(1-\gamma)^2}
    \sum_{i=1}^m \langle r_i,  \nu_\rho^{\pi_T} - \nu_\rho^{\pi_{\tau,\lambda_T}^*} \rangle^2
    \leq
    \frac{1}{(1-\gamma)^2}
    \sum_{i=1}^m \left(r_{i,\max} 
    \norm{\nu_\rho^{\pi_T} - \nu_\rho^{\pi_{\tau,\lambda_T}^*}}_1\right)^2
    =\\
    &=
    \frac{\norm{\nu_\rho^{\pi_T} - \nu_\rho^{\pi_{\tau,\lambda_T}^*}}_1^2}
    {(1-\gamma)^2} R_{\max}^2.
\end{align}

So, we get:
\begin{equation}
    \|\hat{\delta}\|_2 \leq 
    \frac{\norm{\nu_\rho^{\pi_T} - \nu_\rho^{\pi_{\tau,\lambda_T}^*}}_1}
    {(1-\gamma)} R_{\max} \leq 
    \frac{L_{\beta}\abs{\Ac}R_{\max}}
    {(1-\gamma)} \delta.
\end{equation}

Plugging this estimate and the choice of $\tau = \min(1, \sqrt[3]{\epsilon})$.
into \eqref{smoothnes_const_44},
we get the desired result for optimality gap \ref{opt_gap}.

The second part of the result, \ref{constr_viol},
we achieve as follows:
\begin{align*}
    c - V^{\pi_T} &= c - V^{\pi^*_{\tau, \lambda_T}} + V^{\pi^*_{\tau, \lambda_T}} - V^{\pi_T}, \\
    &\leq \bigl\| [ c - V^{\pi^*_{\tau, \lambda_T}} ]_+ \bigr\|_2 + \bigl\| V^{\pi_T} - V^{\pi^*_{\tau, \lambda_T}} \bigr\|_2 \\
    &\stackrel{\eqref{constr_ideal}}{\leq} 2 L_d (\epsilon + 6 \tau \gamma \delta) + \| \hat{\delta} \|_2 \\
    &\stackrel{\eqref{smoothnes_const_44}}{=} \frac{2 R_{max} L_{\beta}}{1-\gamma}(\epsilon^{2/3} + 6 \gamma \delta) + \| \hat{\delta} \|_2
    \leq\\
    &\leq \frac{2 R_{max} L_{\beta}}{1-\gamma}(\epsilon^{2/3} + 6 \gamma \delta) + \frac{L_{\beta}\abs{\Ac}R_{\max}}
    {(1-\gamma)} \delta.
\end{align*}
where $\| \hat{\delta} \|_2$ is bounded in the same way as earlier.

Having achieved both bounds \ref{opt_gap}, \ref{constr_viol}, we conclude
the proof.

% and the other terms can be represented as
% \begin{multline*}
%     V_{\lambda_T}^{\pi^*_{\tau, \lambda_T}} - V_{\lambda_T}^{\pi_T} = V_{\lambda_T}^{\pi^*_{\tau, \lambda_T}} + \tau \mathcal{H}(\pi^*_{\tau, \lambda_T}) - V_{\lambda_T}^{\pi_T} - \tau \mathcal{H}(\pi_T) + \tau \left( \mathcal{H}(\pi_T) - \mathcal{H}(\pi^*_{\tau, \lambda_T}) \right) \\
%     = V_{\tau, \lambda_T}^{\pi^*_{\tau, \lambda_T}} - V_{\tau, \lambda_T}^{\pi_T} + \tau \left( \mathcal{H}(\pi_T) - \mathcal{H}(\pi^*_{\tau, \lambda_T}) \right).
% \end{multline*}
% Thus,
% $$
%     V_{0}^{\pi^*_{\tau, \lambda_T}} - V_{0}^{\pi_T} \leq V_{\lambda_T}^{\pi^*_{\tau, \lambda_T}} - V_{\lambda_T}^{\pi_T} + \sqrt{m}B_{\lambda} \left\|V^{\pi_T} - V^{\pi^*_{\tau, \lambda_T}}\right\|_2,
% $$
% \begin{multline*}
%     V_{0}^{\pi^*_{\tau, \lambda_T}} - V_{0}^{\pi_T} = V_{0}^{\pi^*_{\tau, \lambda_T}} + \bigl\langle \lambda_T, V^{\pi^*_{\tau, \lambda_T}} \bigr\rangle - V_{0}^{\pi_T} - \bigl\langle \lambda_T, V^{\pi_T} \bigr\rangle + \underbrace{\bigl\langle \lambda_T, V^{\pi_T} - V^{\pi^*_{\tau, \lambda_T}} \bigr\rangle}_{\leq \|\lambda_T\|_2 \|V^{\pi_T} - V^{\pi^*_{\tau, \lambda_T}}\|_2} \\
%     = V_{\lambda_T}^{\pi^*_{\tau, \lambda_T}} + \tau \mathcal{H}(\pi^*_{\tau, \lambda_T}) - V_{\lambda_T}^{\pi_T} - \tau \mathcal{H}(\pi_T) + \tau \left( \mathcal{H}(\pi_T) - \mathcal{H}(\pi^*_{\tau, \lambda_T}) \right) + \bigl\langle \lambda_T, V^{\pi_T} - V^{\pi^*_{\tau, \lambda_T}} \bigr\rangle \\
%     = V_{\tau, \lambda_T}^{\pi^*_{\tau, \lambda_T}} - V_{\tau, \lambda_T}^{\pi_T}
% \end{multline*}

\subsection{Proof of Corollary \ref{main_coroll}}\label{coroll_proof}

Suppose we need the resulting accuracy to be $\kappa > 0$:
\begin{align}
    V_{0}^{*}(\rho)-V_{0}^{\pi_T}(\rho) &\leq \kappa,\\
    \bigl\| [ c - V^{\pi_T}(\rho) ]_+ \bigr\|_2 &\leq \kappa.
\end{align}

We will find some $T, \delta$ to use for the algorithm, so that 
by setting other parameters as in \ref{convergence_theorem},
we will get $\kappa$-optimal solution by its results.
It is enough to satisfy these inequalities:
\begin{align}
    &\frac{B_{\lambda} R_{max} \sqrt{2 m L_{\beta}}}{1-\gamma} \sqrt{\epsilon^{2/3}+6\gamma\delta} < \kappa / 5,\\
    &2\epsilon < \kappa / 5, \\
    &18\gamma \delta \sqrt[3]{\epsilon} < \kappa / 5,\\
    &\frac{\log |\Ac|}{1-\gamma} \sqrt[3]{\epsilon} < \kappa / 5,\\
    &\sqrt{m} B_{\lambda} \frac{L_{\beta}\abs{\Ac}R_{\max}}{1-\gamma}
    \delta < \kappa / 5,\\
    &\frac{2 R_{max}^2 L_{\beta}}{1-\gamma} (\epsilon^{2/3}+6\gamma\delta)
    < \kappa / 2,\\
    &\frac{L_{\beta}\abs{\Ac}R_{\max}}
    {(1-\gamma)} \delta < \kappa / 2.
\end{align}
To satisfy them, it is enough to satisfy these:
\begin{align}
    &\epsilon^{2/3}+6\gamma\delta < 
    \frac{(1-\gamma)^2\kappa^2}
    {50 B_{\lambda}^2 R_{max}^2 m L_{\beta}},\\
    &\epsilon < \kappa / 10, \\
    &\delta \sqrt[3]{\epsilon} < \kappa / (90\gamma),\\
    &\epsilon < \frac{\kappa^3(1-\gamma)^3}{125 \log^3 |\Ac|}, \\
    &\delta < 
    \frac{\kappa(1-\gamma)}
    {5\sqrt{m} B_{\lambda} L_{\beta}\abs{\Ac}R_{\max}},\\
    &\epsilon^{2/3}+6\gamma\delta
    < \frac{\kappa(1-\gamma)}{4 R_{max}^2 L_{\beta}},\\
    &\delta < \frac{(1-\gamma)\kappa}{2L_{\beta}\abs{\Ac}R_{\max}}.
\end{align}
To satisfy them, it is enough to satisfy these:
\begin{align}
    &\epsilon < \frac{(1-\gamma)^3\kappa^3}
    {1000 B_{\lambda}^3 R_{max}^3 m^{3/2} L_{\beta}^{3/2}},\\
    &\delta < 
    \frac{(1-\gamma)^2\kappa^2}
    {600 \gamma B_{\lambda}^2 R_{max}^2 m L_{\beta}},\\
    &\epsilon < \kappa / 10,\\
    &\delta < \kappa^{2/3} / (90\gamma)^{2/3},\\
    &\epsilon < \kappa / (90\gamma),\\
    &\epsilon < \frac{\kappa^3(1-\gamma)^3}{125 \log^3 |\Ac|}, \\
    &\delta < 
    \frac{\kappa(1-\gamma)}
    {5\sqrt{m} B_{\lambda} L_{\beta}\abs{\Ac}R_{\max}},\\
    &\epsilon
    < \frac{\kappa^{3/2}(1-\gamma)^{3/2}}
    {16\sqrt{2} R_{max}^3 L_{\beta}^{3/2}},\\
    &\delta
    < \frac{\kappa(1-\gamma)}{48 \gamma R_{max}^2 L_{\beta}},\\
    &\delta < \frac{(1-\gamma)\kappa}{2L_{\beta}\abs{\Ac}R_{\max}}.
\end{align}
Or, rewritten shorter,
\begin{align}
    &\epsilon < 
    \min\left(
     \frac{\kappa^3(1-\gamma)^3}
    {1000 B_{\lambda}^3 R_{max}^3 m^{3/2} L_{\beta}^{3/2}},
     \frac{\kappa}{10},
     \frac{\kappa}{90\gamma},
     \frac{\kappa^3(1-\gamma)^3}{125 \log^3 |\Ac|},
     \frac{\kappa^{3/2}(1-\gamma)^{3/2}}
    {16\sqrt{2} R_{max}^3 L_{\beta}^{3/2}}
    \right) =: C_\epsilon, \\
    &\delta < \min\left(
    \frac{\kappa^2(1-\gamma)^2}
    {600 \gamma B_{\lambda}^2 R_{max}^2 m L_{\beta}},
    \frac{\kappa^{2/3}}{(90\gamma)^{2/3}},
     \frac{\kappa(1-\gamma)}
    {5\sqrt{m} B_{\lambda} L_{\beta}\abs{\Ac}R_{\max}},
    \frac{\kappa(1-\gamma)}{48 \gamma R_{max}^2 L_{\beta}},
    \frac{\kappa(1-\gamma)}{2L_{\beta}\abs{\Ac}R_{\max}}
    \right) =: C_\delta.
\end{align}
Now, knowing that $\epsilon$ depends on $T$ as in \ref{dual_unopt},
we need to choose $T$, such that $\epsilon < C_{\epsilon}$:
\begin{align}
    &\epsilon = 
    \frac{2m^2 B_{\lambda}}{\zeta} \left(\xi + \frac{\sqrt{m} R_{max}}{1-\gamma} \right) \exp \left( \frac{\log \pmb{\pi} - \zeta T}{2m} \right)
    < C_{\epsilon},\\
    &\exp \left( \frac{\log \pmb{\pi} - \zeta T}{2m} \right) <
    \frac{\zeta C_\epsilon}
    {2m^2 B_{\lambda} \left(\xi + \frac{\sqrt{m} R_{max}}{1-\gamma} \right)},\\
    &\frac{\log \pmb{\pi} - \zeta T}{2m}  < 
    \log( \frac{\zeta C_\epsilon}
    {2m^2 B_{\lambda} \left(\xi + \frac{\sqrt{m} R_{max}}{1-\gamma} \right)}),\\
    &T > \frac{\log \pmb{\pi}}{\zeta} + \frac{2m}{\zeta}
    \log(\frac
    {2m^2 B_{\lambda} \left(\xi + \frac{\sqrt{m} R_{max}}{1-\gamma} \right)}
    {\zeta C_\epsilon}).
\end{align}
And a sufficient number of NPG iterations needed to achieve $C_\delta$
accuracy of each NPG call can be determined using \ref{npg_convergence}:
\begin{align}
    N_{NPG} \approx \frac{
    \log{2C_1R} + \log{C_\delta^{-1}} + \max(\frac{1}{3}
    \log{C_\epsilon^{-1}}, 0)
    }
    {\log \gamma^{-1}},
\end{align}
where $C_1 \geq \normbig{Q_{\tau}^*(\rho) - Q_{\tau}^{(0)}(\rho)}_\infty,
R \geq \max_{s, a} r(s, a)$ for any MDP on which NPG might be called throughout
execution of the algorithm.

It can be seen that asymptotic is
\begin{align}
    &T = \mathcal{O}\left(\frac{m}{\zeta}
    \log(\frac{m \log \abs{\Ac}}{\zeta\xi(1-\gamma)
    (1-\beta)\kappa})
    \right),\\
     &N_{NPG} = \mathcal{O}\left(\frac{1}{1 - \gamma}
    \log(\frac{m \log \abs{\Ac}}{(1-\gamma)\xi(1-\beta)\kappa})
    \right),
\end{align}
which gives us the result (accuracy $\kappa$ is $\epsilon$ in the statement).

\section{Lemmas for the case of regularized dual variables }\label{dual_reg_append}

% Problem: 
% \begin{equation}
% \begin{aligned}
%     &\min_xf(x)\\
%     &g(x)\leq 0, \ x \in Q
% \end{aligned}
% \end{equation}
% where $f,\ g \in \mathbb{R}$
% \begin{equation}
% \begin{aligned}
%   d(\lambda)=\max_{x\in Q} \left \{ \left. -\left \langle  \lambda,g(x) \right \rangle -f(x) \right \} \right. = -& \left \langle  \lambda,g(x(\lambda)) \right \rangle -f(x(\lambda))\\
%   \nabla d(\lambda)=-g(x(\lambda))&
% \end{aligned}
% \end{equation}

% Dual problem:
% \begin{equation}
%     d(\lambda)\rightarrow \max_{\lambda\geq 0}
% \end{equation}

% \begin{lemma}
% The first criterion may violate the constraints on $g$, we introduce the second condition for convergence by the gradient norm and the residual by the function
% \begin{equation}
% \begin{aligned}
%     &1)\ f(x(\lambda))-f(x_*)\leq \left \langle \lambda,\nabla d(\lambda) \right \rangle\\
%     &2)\ \left \| \left [ g(x(\lambda)) \right ]_+ \right \|_2=\left \| \left [ -\nabla d(\lambda) \right ]_+ \right \|_2,
% \end{aligned}
% \end{equation}
% where $x_*$ - problem solution
% \end{lemma}
Consider the regularized dual problem:
\begin{equation}
\label{regularised}
    \max_{\lambda \in \R^m_+} d_{\tau,\mu}(\lambda):=d_{\tau}(\lambda) + \frac{\mu}{2} \| \lambda \|_2^2.
\end{equation}
The objective is $L_{d,\mu}$-smooth with $L_{d,\mu}:=L_d+\mu$. The proof of the convergence mimics the proof from Appendix \ref{main_proof} but with a better bounds on the value $\left \langle  \nabla d_{\tau}(\lambda),\lambda \right \rangle$ and on the norm of dual variable derived below.
% \begin{equation}
% \label{regularised}
%     d_{\tau,\mu}(\lambda)=d(\lambda)+\frac{\mu}{2}\left \|\lambda \right \|_2^2\rightarrow \max_{\lambda\geq0},
% \end{equation}
% \begin{equation}
% \label{regularised}
%     d_{\tau,\mu}(\lambda)=d(\lambda)+\frac{\mu}{2}\left \|\lambda \right \|_2^2\rightarrow \max_{\lambda\geq0},
% \end{equation}
\begin{lemma}
It holds
$$
    \frac{1}{2L_{d,\mu}}\sum_{i:\lambda_i>\frac{1}{L_{d,\mu}}\frac{\partial d_{\tau,\mu}}{\partial \lambda_i}}\left (\frac{\partial d_{\tau,\mu}}{\partial \lambda_i}  \right )^2+\frac{1}{2} \sum_{i:\lambda_i\leq\frac{1}{L_{d,\mu}}\frac{\partial d_{\tau,\mu}}{\partial \lambda_i}}\frac{\partial d_{\tau,\mu}}{\partial \lambda_i}\lambda_i \leq d_{\tau,\mu}(\lambda)-d_{\tau,\mu}(\lambda_*).
$$
\end{lemma}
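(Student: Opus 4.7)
The plan is to mirror the descent-step argument of Lemma~\ref{Lemma2}, but simplified by the fact that the regularization makes the decomposition cleaner. Write $a := \nabla d_{\tau,\mu}(\lambda)$ and consider the projected gradient step
$$\lambda' := \left[\lambda - \frac{1}{L_{d,\mu}} a\right]_+ \;\in\; \mathbb{R}_+^m.$$
Since $d_{\tau,\mu}$ is $L_{d,\mu}$-smooth, the descent inequality gives
$$\langle a, \lambda - \lambda' \rangle - \frac{L_{d,\mu}}{2}\|\lambda - \lambda'\|_2^2 \;\leq\; d_{\tau,\mu}(\lambda) - d_{\tau,\mu}(\lambda') \;\leq\; d_{\tau,\mu}(\lambda) - d_{\tau,\mu}(\lambda_*),$$
where the last step uses that $\lambda_*$ minimizes $d_{\tau,\mu}$ over $\mathbb{R}_+^m$ and $\lambda'$ lies in this set. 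It then suffices to lower-bound the left-hand side by the quantity appearing in the lemma.

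Next, I would partition $\{1,\ldots,m\}$ according to whether the projection is active: let $\tilde{J} := \{i : \lambda_i > a_i/L_{d,\mu}\}$ and $\tilde{J}' := \{i : \lambda_i \leq a_i/L_{d,\mu}\}$, so that $\lambda_i - \lambda'_i = a_i/L_{d,\mu}$ for $i \in \tilde{J}$ and $\lambda_i - \lambda'_i = \lambda_i$ (with $\lambda'_i = 0$) for $i \in \tilde{J}'$. A direct componentwise computation gives
$$\langle a, \lambda - \lambda' \rangle - \frac{L_{d,\mu}}{2}\|\lambda - \lambda'\|_2^2 \;=\; \sum_{i \in \tilde{J}} \frac{a_i^2}{2L_{d,\mu}} \;+\; \sum_{i \in \tilde{J}'} \lambda_i\left(a_i - \frac{L_{d,\mu}}{2}\lambda_i\right).$$

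Finally, the key observation is that for $i \in \tilde{J}'$ the defining inequality reads $a_i \geq L_{d,\mu}\lambda_i$; combined with $\lambda_i \geq 0$ this forces $a_i \geq 0$ and
$$a_i - \frac{L_{d,\mu}}{2}\lambda_i \;\geq\; L_{d,\mu}\lambda_i - \frac{L_{d,\mu}}{2}\lambda_i \;=\; \frac{L_{d,\mu}}{2}\lambda_i \;\geq\; \frac{a_i}{2} \cdot \mathbf{1}_{\{a_i \geq 0\}},$$
so $\lambda_i(a_i - (L_{d,\mu}/2)\lambda_i) \geq \tfrac{1}{2}a_i\lambda_i$. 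Summing and chaining with the descent bound yields the stated inequality. The only subtle step is this componentwise estimate on $\tilde{J}'$ --- using nonnegativity of $\lambda$ together with the partition rule to recover the bilinear form $a_i\lambda_i$ on the lower bound --- and I expect it to be the main (mild) obstacle; the rest is linear algebra inside the descent-lemma template already used in Lemma~\ref{Lemma2}.
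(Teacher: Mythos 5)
Your argument is essentially the paper's own proof: a projected gradient step $\lambda' = \bigl[\lambda - \nabla d_{\tau,\mu}(\lambda)/L_{d,\mu}\bigr]_+$, the descent inequality for the $L_{d,\mu}$-smooth dual, a coordinate-wise split according to whether the projection clips, and a per-coordinate bound on the clipped set, finished by $d_{\tau,\mu}(\lambda')\ge d_{\tau,\mu}(\lambda_*)$. One link in your final display is reversed, however: on $\tilde J'$ the partition condition $a_i \ge L_{d,\mu}\lambda_i$ gives $\tfrac{L_{d,\mu}}{2}\lambda_i \le \tfrac{a_i}{2}$, not $\ge$ as written, so the chain as stated does not deliver the bound. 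The intended conclusion is still immediate from the same condition: either note $a_i - \tfrac{L_{d,\mu}}{2}\lambda_i \ge a_i - \tfrac{a_i}{2} = \tfrac{a_i}{2}$ and multiply by $\lambda_i\ge 0$, or equivalently bound $\tfrac{L_{d,\mu}}{2}\lambda_i^2 \le \tfrac12 a_i\lambda_i$, which is exactly the step in the paper's proof; with that one-line correction your proof coincides with the published one.
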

% \begin{equation}
% \begin{aligned}
% \label{lemma}
%     \frac{1}{2L}\sum_{i:\lambda_i>\frac{1}{L}\frac{\partial d_{\tau,\mu}}{\partial \lambda_i}}\left (\frac{\partial d_{\tau,\mu}}{\partial \lambda_i}  \right )^2+\frac{1}{2}&\sum_{i:\lambda_i\leq\frac{1}{L}\frac{\partial d_{\tau,\mu}}{\partial \lambda_i}}\frac{\partial d_{\tau,\mu}}{\partial \lambda_i}\lambda_i\\
%     &\leq d_{\tau,\mu}(\lambda)-d_{\tau,\mu}(\widetilde{\lambda})\leq d_{\tau,\mu}(\lambda)-d_{\tau,\mu}(\lambda_*)
% \end{aligned}
% \end{equation} 
% \end{lemma} <_---------------------------- xD
\begin{proof} 

Define $\widetilde{\lambda}:=\left [ \lambda- \frac{1}{L_{d,\mu}}\nabla d_{\tau,\mu}(\lambda) \right ]_+$. Since $d_{\tau,\mu}$ has a Lipschitz continuous gradient on $\Lambda$, the following implications hold:
\begin{equation}
\begin{aligned}
    d_{\tau,\mu}(\widetilde{\lambda})\leq d_{\tau,\mu}(\lambda)&+\bigl\langle \nabla d_{\tau,\mu}, \widetilde{\lambda}-\lambda \bigr\rangle+\frac{L_{d,\mu}}{2}\left \|\widetilde{\lambda}-\lambda \right \|_2^2= \\
    =d_{\tau,\mu}(\lambda)&- \sum_{i:\lambda_i>\frac{1}{L_{d,\mu}}\frac{\partial d_{\tau,\mu}}{\partial \lambda_i}} \frac{\partial d_{\tau,\mu}}{\partial \lambda_i}\frac{1}{L_{d,\mu}}\frac{\partial d_{\tau,\mu}}{\partial \lambda_i}-\sum_{i:\lambda_i\leq\frac{1}{L_{d,\mu}}\frac{\partial d_{\tau,\mu}}{\partial \lambda_i}}\frac{\partial d_{\tau,\mu}}{\partial \lambda_i}\lambda_i +\\
    &+\sum_{i:\lambda_i>\frac{1}{L_{d,\mu}}\frac{\partial d_{\tau,\mu}}{\partial \lambda_i}}\frac{L_{d,\mu}}{2}\left(\frac{1}{L_{d,\mu}}\frac{\partial d_{\tau,\mu}}{\partial \lambda_i}\right)^2+\sum_{i:\lambda_i\leq\frac{1}{L_{d,\mu}}\frac{\partial d_{\tau,\mu}}{\partial \lambda_i}}\frac{L_{d,\mu}}{2}\lambda^2_i\\
    \leq d_{\tau,\mu}(\lambda)&-\frac{1}{2L_{d,\mu}}\sum_{i:\lambda_i>\frac{1}{L_{d,\mu}}\frac{\partial d_{\tau,\mu}}{\partial \lambda_i}}\left(\frac{\partial d_{\tau,\mu}}{\partial \lambda_i}\right)^2-\frac{1}{2}\sum_{i:\lambda_i\leq\frac{1}{L_{d,\mu}}\frac{\partial d_{\tau,\mu}}{\partial \lambda_i}}\frac{\partial d_{\tau,\mu}}{\partial \lambda_i}\lambda_i.
\label{lemma}
\end{aligned}
\end{equation}
The statement now follows from $d_{\tau,\mu}(\widetilde{\lambda})\geq d_{\tau,\mu}(\lambda_*)$.
\end{proof}

\begin{lemma}
\begin{equation}
    \left \langle  \lambda, \nabla d_{\tau}(\lambda) \right \rangle \leq \frac{L_{d,\mu}}{\mu}(d_{\tau,\mu}(\lambda) -d_{\tau,\mu}(\lambda_*)).
\end{equation}
% \begin{equation}
% \begin{aligned}
%     \left \langle  \lambda, \nabla d(\lambda) \right \rangle &\leq \frac{L_{d,\mu}}{\mu}(d_{\tau,\mu}(\lambda) -d_{\tau,\mu}(\lambda_*))\\
%     &L_{d,\mu}=L_d+\mu
% \end{aligned}
% \end{equation}
\end{lemma}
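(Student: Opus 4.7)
The plan is to reduce the claim to the previous lemma by using the identity $\nabla d_{\tau,\mu}(\lambda) = \nabla d_{\tau}(\lambda) + \mu\lambda$, and then applying Young's inequality coordinate-by-coordinate to the resulting inner product. First, I would write
\begin{equation*}
\langle \lambda,\nabla d_{\tau}(\lambda)\rangle \;=\; \langle \lambda,\nabla d_{\tau,\mu}(\lambda)\rangle \;-\; \mu\|\lambda\|_2^2,
\end{equation*}
and split $\langle \lambda,\nabla d_{\tau,\mu}(\lambda)\rangle$ as a sum over the index sets $S_1 := \{i:\lambda_i > \tfrac{1}{L_{d,\mu}}\tfrac{\partial d_{\tau,\mu}}{\partial \lambda_i}\}$ and $S_2 := \{1,\dots,m\}\setminus S_1$ that appear in the previous lemma.

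For $i \in S_2$ I would invoke the previous lemma directly, extracting the nonnegative-term bound $\tfrac12\sum_{i\in S_2}\lambda_i \tfrac{\partial d_{\tau,\mu}}{\partial \lambda_i} \leq d_{\tau,\mu}(\lambda)-d_{\tau,\mu}(\lambda_*)$. For $i \in S_1$ I would apply Young's inequality with parameter $\mu$,
\begin{equation*}
\lambda_i \tfrac{\partial d_{\tau,\mu}}{\partial \lambda_i} \;\leq\; \tfrac{\mu}{2}\lambda_i^2 \;+\; \tfrac{1}{2\mu}\Bigl(\tfrac{\partial d_{\tau,\mu}}{\partial \lambda_i}\Bigr)^2,
\end{equation*}
and then use the companion bound $\tfrac{1}{2L_{d,\mu}}\sum_{i\in S_1}\bigl(\tfrac{\partial d_{\tau,\mu}}{\partial \lambda_i}\bigr)^2 \leq d_{\tau,\mu}(\lambda)-d_{\tau,\mu}(\lambda_*)$ from the previous lemma. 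This yields
\begin{equation*}
\textstyle\sum_{i\in S_1}\lambda_i \tfrac{\partial d_{\tau,\mu}}{\partial \lambda_i} \;\leq\; \tfrac{\mu}{2}\sum_{i\in S_1}\lambda_i^2 \;+\; \tfrac{L_{d,\mu}}{\mu}\bigl(d_{\tau,\mu}(\lambda)-d_{\tau,\mu}(\lambda_*)\bigr).
\end{equation*}

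Summing the $S_1$ and $S_2$ contributions and then subtracting $\mu\|\lambda\|_2^2$, the $\tfrac{\mu}{2}\sum_{i\in S_1}\lambda_i^2$ term is dominated by $\mu\|\lambda\|_2^2$, producing a nonpositive remainder and leaving the desired coefficient $\tfrac{L_{d,\mu}}{\mu}$ in front of $d_{\tau,\mu}(\lambda)-d_{\tau,\mu}(\lambda_*)$. The main obstacle is the bookkeeping of the lower-order additive constant (the factor of $2$ coming from the $S_2$ estimate): a naive combination produces $\bigl(\tfrac{L_{d,\mu}}{\mu}+2\bigr)\Delta$ rather than $\tfrac{L_{d,\mu}}{\mu}\Delta$, and closing this gap requires either tightening the $S_2$ estimate by exploiting the extra inequality $\tfrac{\partial d_{\tau,\mu}}{\partial \lambda_i}\geq L_{d,\mu}\lambda_i$ valid on $S_2$ (which lets us upper bound $\lambda_i\tfrac{\partial d_{\tau,\mu}}{\partial \lambda_i}$ by $\tfrac{1}{L_{d,\mu}}\bigl(\tfrac{\partial d_{\tau,\mu}}{\partial \lambda_i}\bigr)^2$ and fold it into the $S_1$ sum), or a slightly different parameter choice in Young's inequality tuned so that the $\|\lambda\|_2^2$ cancellation is exact across all coordinates.
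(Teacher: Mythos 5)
Your executed argument only establishes the weaker bound $\langle\lambda,\nabla d_\tau(\lambda)\rangle \le \bigl(\tfrac{L_{d,\mu}}{\mu}+2\bigr)\bigl(d_{\tau,\mu}(\lambda)-d_{\tau,\mu}(\lambda_*)\bigr)$, which you acknowledge, and neither of your suggested repairs closes the gap as stated. The extra additive $2$ comes from invoking the previous lemma \emph{twice} --- once to extract $\tfrac{1}{2L_{d,\mu}}\sum_{i\in S_1}\bigl(\tfrac{\partial d_{\tau,\mu}}{\partial\lambda_i}\bigr)^2 \le d_{\tau,\mu}(\lambda)-d_{\tau,\mu}(\lambda_*)$ and again, separately, to extract $\tfrac12\sum_{i\in S_2}\lambda_i\tfrac{\partial d_{\tau,\mu}}{\partial\lambda_i} \le d_{\tau,\mu}(\lambda)-d_{\tau,\mu}(\lambda_*)$ --- so the dual gap is counted twice. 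Your first repair (bounding $\lambda_i\tfrac{\partial d_{\tau,\mu}}{\partial\lambda_i}\le\tfrac{1}{L_{d,\mu}}\bigl(\tfrac{\partial d_{\tau,\mu}}{\partial\lambda_i}\bigr)^2$ on $S_2$ and ``folding it into the $S_1$ sum'') is not available: the previous lemma controls the squared partial derivatives only over $S_1$, so there is no estimate on $\sum_{i\in S_2}\bigl(\tfrac{\partial d_{\tau,\mu}}{\partial\lambda_i}\bigr)^2$ that could absorb those terms. Your second repair (retuning the Young parameter) does not touch the real problem either, since the loss is caused by the separate double application of the previous lemma, not by the choice of parameter.

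What does work, and what the paper does: do not split off $-\mu\|\lambda\|_2^2$ globally. Instead, bound $\tfrac{\mu}{L_{d,\mu}}\langle\lambda,\nabla d_\tau(\lambda)\rangle$ coordinate-wise by the right-hand side of the previous lemma and apply that lemma \emph{once}. On $S_1$, apply your Young step to the \emph{unregularized} partial derivative: $\bigl(\tfrac{\partial d_{\tau,\mu}}{\partial\lambda_i}\bigr)^2=\bigl(\tfrac{\partial d_{\tau}}{\partial\lambda_i}+\mu\lambda_i\bigr)^2\ge 2\mu\lambda_i\tfrac{\partial d_{\tau}}{\partial\lambda_i}$, so that the $\mu\lambda_i^2$ term is absorbed exactly per coordinate and $\tfrac{\mu}{L_{d,\mu}}\lambda_i\tfrac{\partial d_{\tau}}{\partial\lambda_i}\le\tfrac{1}{2L_{d,\mu}}\bigl(\tfrac{\partial d_{\tau,\mu}}{\partial\lambda_i}\bigr)^2$. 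On $S_2$, use $\lambda_i\ge 0$ and $\tfrac{\partial d_{\tau,\mu}}{\partial\lambda_i}\ge L_{d,\mu}\lambda_i\ge 0$ to get $0\le\lambda_i\tfrac{\partial d_{\tau}}{\partial\lambda_i}\le\lambda_i\tfrac{\partial d_{\tau,\mu}}{\partial\lambda_i}$, hence $\tfrac{\mu}{L_{d,\mu}}\lambda_i\tfrac{\partial d_{\tau}}{\partial\lambda_i}\le\tfrac12\lambda_i\tfrac{\partial d_{\tau,\mu}}{\partial\lambda_i}$ provided $\mu\le L_d$ (so that $\tfrac{\mu}{L_{d,\mu}}\le\tfrac12$; this is consistent with the paper's ``$\mu$ sufficiently small'' regime). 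Summing over all coordinates and invoking the previous lemma a single time gives $\tfrac{\mu}{L_{d,\mu}}\langle\lambda,\nabla d_\tau(\lambda)\rangle\le d_{\tau,\mu}(\lambda)-d_{\tau,\mu}(\lambda_*)$, which is exactly the claimed constant.
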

\begin{proof}
From \eqref{lemma} we have:
\begin{equation}
\begin{aligned}
    d_{\tau,\mu}(\lambda)-d_{\tau,\mu}(\lambda_*)\geq\frac{1}{2L_{d,\mu}}\sum_{i:\lambda_i>\frac{1}{L_{d,\mu}}\frac{\partial d_{\tau,\mu}}{\partial \lambda_i}}\left (\frac{\partial d_{\tau,\mu}}{\partial \lambda_i}  \right )^2+\frac{1}{2}&\sum_{i:\lambda_i\leq\frac{1}{L_{d,\mu}}\frac{\partial d_{\tau,\mu}}{\partial \lambda_i}}\frac{\partial d_{\tau,\mu}}{\partial \lambda_i}\lambda_i.
\end{aligned}
\end{equation}
Note that
\begin{equation}
\label{first_sum}
\left (\frac{\partial d_{\tau,\mu}}{\partial \lambda_i}  \right )^2= \left (\frac{\partial d_{\tau}}{\partial \lambda_i} +\mu\lambda_i \right )^2\geq 2\mu\frac{\partial d_{\tau,\mu}}{\partial \lambda_i}\lambda_i,
\end{equation}
\begin{equation}
\label{second_sum}
\frac{\partial d_{\tau,\mu}}{\partial \lambda_i}\lambda_i\geq\frac{\partial d}{\partial \lambda_i}\lambda_i, \ \lambda_i\geq0.
\end{equation}
Then, from \eqref{first_sum},\eqref{second_sum}
\begin{equation}
    d_{\tau,\mu}(\lambda)-d_{\tau,\mu}(\lambda_*)\geq\frac{2\mu}{2L_{d,\mu}}\sum_{i:\lambda_i>\frac{1}{L_{d,\mu}}\frac{\partial d_{\tau,\mu}}{\partial \lambda_i}}\frac{\partial d}{\partial \lambda_i}\lambda_i+\frac{1}{2}\sum_{i:\lambda_i\leq\frac{1}{L_{d,\mu}}\frac{\partial d_{\mu,\tau}}{\partial \lambda_i}}\frac{\partial d}{\partial\lambda_i}\lambda_i\geq\\
    \frac{\mu}{L_{d,\mu}} \left \langle  \nabla d_{\tau}(\lambda),\lambda \right \rangle,
\end{equation}
and we get:
\begin{equation}
    \left \langle  \nabla d_{\tau}(\lambda),\lambda \right \rangle \leq \frac{L_{d,\mu}}{\mu}(d_{\tau,\mu}(\lambda)-d_{\tau,\mu}(\lambda_*)).
\end{equation}
\end{proof}

Regularization by $\mu$:
\begin{lemma}
\begin{equation}
\begin{aligned}
    \left\|\lambda_{*}^{\mu}\right\|_{2}^{2} \leqslant \frac{2}{\mu}\left (d_{\tau}(0)-d_{\tau}\left(\lambda_{*}\right)\right),
\end{aligned}
\end{equation}
where $\lambda^{\mu}_*$ is the solution of \eqref{regularised}.
\end{lemma}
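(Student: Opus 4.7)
The plan is straightforward and relies only on two minimality inequalities. First, I would note that although equation (25) in the excerpt writes $\max$, the regularized dual problem is the minimization
\begin{equation*}
    \min_{\lambda \in \R^m_+} d_{\tau,\mu}(\lambda) = d_\tau(\lambda) + \tfrac{\mu}{2}\|\lambda\|_2^2,
\end{equation*}
consistent with \eqref{dual} and with the previous two lemmas in this section (one checks that the extra $\frac{\mu}{2}\|\lambda\|_2^2$ term acts as a regularizer of the dual objective, not as a reward). Let $\lambda_*^\mu$ denote its minimizer and $\lambda_*$ denote a minimizer of $d_\tau$ on $\R^m_+$.

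The first key step is to exploit that $0 \in \R^m_+$ is feasible for the regularized problem. By optimality of $\lambda_*^\mu$,
\begin{equation*}
    d_\tau(\lambda_*^\mu) + \tfrac{\mu}{2}\|\lambda_*^\mu\|_2^2 \;=\; d_{\tau,\mu}(\lambda_*^\mu) \;\leq\; d_{\tau,\mu}(0) \;=\; d_\tau(0).
\end{equation*}
The second step is to use the optimality of $\lambda_*$ for the unregularized dual, yielding $d_\tau(\lambda_*) \leq d_\tau(\lambda_*^\mu)$. Rearranging the first inequality and substituting the second gives
\begin{equation*}
    \tfrac{\mu}{2}\|\lambda_*^\mu\|_2^2 \;\leq\; d_\tau(0) - d_\tau(\lambda_*^\mu) \;\leq\; d_\tau(0) - d_\tau(\lambda_*),
\end{equation*}
which is the claimed bound after multiplying by $2/\mu$.

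There is essentially no obstacle: the statement is a standard ``regularizer shrinks the minimizer'' estimate, and the whole argument is a two-line comparison. The only mild subtlety is the sign convention mismatch in the statement of problem (25), which I would flag explicitly and resolve by reading the problem as a minimization of $d_{\tau,\mu}$ (the only convention under which the lemma's right-hand side is nonnegative, since $d_\tau(0) \geq d_\tau(\lambda_*)$ by definition of $\lambda_*$).
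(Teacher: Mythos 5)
Your proof is correct and is essentially the same two-line comparison as the paper's: both arguments combine $d_{\tau,\mu}(0)=d_\tau(0)$, the optimality of $\lambda_*$ for $d_\tau$ (giving $d_\tau(\lambda_*^\mu)\ge d_\tau(\lambda_*)$), and an evaluation at $\lambda_*^\mu$, and both correctly read \eqref{regularised} as a minimization. The only cosmetic difference is that where you use plain optimality $d_{\tau,\mu}(\lambda_*^\mu)\le d_{\tau,\mu}(0)$, the paper instead invokes the $\mu$-strong-convexity inequality $\frac{\mu}{2}\|0-\lambda_*^\mu\|_2^2\le d_{\tau,\mu}(0)-d_{\tau,\mu}(\lambda_*^\mu)$; both give the identical bound.
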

\begin{proof}
From $\frac{\mu}{2}\left\|\lambda-\lambda_*\right\|_{2}^{2}\leq d_{\tau}(\lambda)-d_{\tau}(\lambda_*)$, we get
\begin{equation}
    d_{\tau,\mu}\left(\lambda_{*}^{\mu}\right)=d_{\tau}\left(\lambda_{*}^{\mu}\right)+\frac{\mu}{2}\left\|\lambda_{*}^{\mu}\right\|_{2}^{2} \geqslant d_{\tau}\left(\lambda_{*}\right)+\frac{\mu}{2}\left\|\lambda_{*}^{\mu}\right\|_{2}^{2} \geqslant d_{\tau}\left(\lambda_{*}\right).
\end{equation}
Then 
\begin{equation}
    \frac{\mu}{2}\left\|\lambda_{\mu}^{*}\right\|_{2}^{2}=\frac{\mu}{2}\left\|0-\lambda_{\mu}^{*}\right\|_{2}^{2} \leqslant d_{\tau,\mu}(0)-d_{\tau,\mu}\left(\lambda_{*}^{\mu}\right) \leqslant d_{\tau}(0)-d_{\tau}\left(\lambda_{*}\right),
\end{equation}
where we used that $d_{\tau,\mu}(0)=d_{\tau}(0)$.
\end{proof}

\section{Experimental Parameters}\label{sec:expparam}

\subsection{Environment}
\label{sec:paramA}

The reward and cost functions in our environment are the same as in \cite{lanarcpo}. The agent receives a reward +1 for the end of the lower link being at a height of 0.5 and cost one of 1 when the first link swings at a anticlockwise direction and the agent applies a +1 torque to the actuating joint; it also receives a cost two of 1 when the second link swings at a anticlockwise direction with respect to the first link and the agent applies a +1 torque to the actuating joint. The cost thresholds are 50.

\subsection{Algorithm parameters}
\label{sec:paramB}
The policy networks for all experiments have two hidden layers of sizes 128 with ReLu activation function. We also have value networks with the same architecture and activation functions as the policy networks. Table \ref{tab:alg_params} summarizes the hyperparameters used in our experiments. 

\begin{table}[ht]
\begin{minipage}{\textwidth}
\caption{Hyperparameters in total reward case.}
\label{tab:alg_params}
\begin{tabular*}{\textwidth}{l @{\extracolsep{\fill}} lll}
\hline
Hyperparameter         & VMDP & AR-CPO \\ \hline
Batch size             &  $5100$    &  $5100$      \\
Discount $\gamma$      &  $0.98$   &  $0.98$      \\
Maximum episode length &  $500$   &   $500$     \\
Learning rate          &   $1$   &    $1$    \\
The number of policy optimization steps in NPG subroutine  &  $4$    &    $1$    \\
max\_KL: the parameter that controls the NPG updates&  $0.01$    &    $0.01$     \\
$\eta$: the parameter in the update of $\lambda$ from \cite{lanarcpo}&  N/A    &    $ 0.0003$    \\
$s$ from \cite{lanarcpo} &   N/A   &   $1$     \\
$H$ from \cite{lanarcpo} &   N/A   &   $45$    \\
Entropy regularisation constant $\tau$ &  $0.01$     &  $0$      \\
Regularization coefficient $\mu$  &  $0$     &  $0$      \\
Optimization set radius for $\lambda$  &  $1$     &  N/A    \\
$\eta$ from Algorithm \ref{alg:vaidya} &  $1000$     &  N/A    \\
$\zeta$ from Algorithm \ref{alg:vaidya} &  $10^{-1}$     &  N/A    \\
\hline
\end{tabular*}
\end{minipage}
\end{table}

% \section{Additional Experiments}
% \subsection{Experiment results}

\subsection{Algorithm parameters for discounted case} \label{sec:additional_expparam}

The most of the parameters are similar to according ones in Table \ref{tab:alg_params}. Values, which differ, are represented in Table \ref{tab:alg_params_add}.

\begin{table}[ht]
\begin{minipage}{\textwidth}
\caption{Changed hyperparameters in discounted case.}
\label{tab:alg_params_add}
\begin{tabular*}{\textwidth}{l @{\extracolsep{\fill}} lll}
\hline
Hyperparameter         & VMDP & AR-CPO \\ \hline
The number of policy optimization steps in NPG subroutine  &  $40$    &    $1$    \\
max\_KL: the parameter that controls the NPG updates&  $0.01$    &    $0.001$     \\
Entropy regularisation constant $\tau$ &  $0.001$     &  $0$      \\
Regularization coefficient $\mu$  &  $0.01$     &  $0.001$      \\
\hline
\end{tabular*}
\end{minipage}
\end{table}
\end{document}